\theoremstyle{plain}
\newtheorem{theorem}{Theorem}[section]
\newtheorem{lemma}[theorem]{Lemma}
\newtheorem{corollary}[theorem]{Corollary}
\theoremstyle{definition}
\theoremstyle{remark}
\newtheorem{remark}[theorem]{Remark}
\newtheorem*{remark*}{Remark}
\numberwithin{equation}{section}
\newcommand\D{{\mathcal D}}
\newcommand\A{{\mathcal A}}
\newcommand\CC{{\mathbb C}}
\newcommand\RR{{\mathbb R}}
\newcommand\ZZ{{\mathbb Z}}
\newcommand\PP{{\mathbb P}}
\newcommand{\Cas}{\mathrm {Cas\,}}
\newcommand\p{\mbox{$\mathfrak{p}$}}
\newcommand\pp{\mbox{$\mathfrak{p}_{F}$}}
\newcommand\Sh{\mbox{\Large $\mathfrak {s}$}}
\title[]
{Constructing bispectral orthogonal polynomials  from the classical discrete families of Charlier, Meixner and Krawtchouk}
\author{Antonio J. Dur\'an and Manuel D. de la Iglesia}
\address{A. J. Dur\'an and Manuel D. de la Iglesia\\
Departamento de An\'{a}lisis Matem\'{a}tico \\
Universidad de Sevilla \\
Apdo (P. O. BOX) 1160\\
41080 Sevilla. Spain.}
\email{duran@us.es, mdi29@us.es}
\thanks{Partially supported by MTM2012-36732-C03-03 (Ministerio de Economía y Competitividad),
FQM-262, FQM-4643, FQM-7276 (Junta de Andalucía) and Feder Funds (European
Union).}
\subjclass{33C45, 33E30, 42C05}
\keywords{Orthogonal polynomials. Difference operators and equations.
Charlier polynomials. Meixner polynomials. Krawtchouk polynomials. Krall polynomials.}
   \date{}
\begin{document}
   \maketitle

   \begin{abstract}
Given a sequence of polynomials $(p_n)_n$, an algebra of operators $\A$ acting in the linear space of polynomials and an operator $D_p\in \A$
with $D_p(p_n)=np_n$, we form a new sequence of polynomials $(q_n)_n$ by considering a linear combination of $m$ consecutive $p_n$: $q_n=p_n+\sum_{j=1}^m\beta_{n,j}p_{n-j}$. Using the concept of $\mathcal{D}$-operator, we determine the structure of the sequences $\beta_{n,j}, j=1,\ldots,m,$ in order that the polynomials $(q_n)_n$ are common eigenfunctions of an operator in the algebra $\A$. As an application, from the classical discrete families of Charlier, Meixner and Krawtchouk we construct orthogonal polynomials $(q_n)_n$
which are also eigenfunctions of higher order difference operators.
\end{abstract}

\section{Introduction}
The most important families of orthogonal polynomials are the classical, classical discrete or $q$-classical families (Askey scheme and its $q$-analogue). Besides the orthogonality, they are also common eigenfunctions of a second order differential, difference or $q$-difference operator, respectively.

The  issue of orthogonal polynomials which are also common eigenfunctions of a higher order differential operator was raised by H.L. Krall in 1939, when he obtained a complete classification for the case of a differential operator of order four (\cite{Kr2}). After his pioneer work, orthogonal polynomials which are also common eigenfunctions of higher order differential operators are usually called Krall polynomials. This terminology can be extended for finite order difference and $q$-difference operators. Hence Krall discrete polynomials are orthogonal polynomials which are also common eigenfunctions of higher order difference operators.

In the terminology introduced by Duistermaat and Gr\"unbaum (\cite{DG}; see also \cite{GrH1}, \cite{GrH3}), a family $(p_n)_n$ of Krall, Krall discrete or $q$-Krall polynomials is called bispectral. Indeed, as functions of the continuous variable $x$, the polynomials $(p_n)_n$ are eigenfunctions of a higher order differential, difference or $q$-difference operator, respectively. But, on the other hand, since the polynomials $(p_n)_n$ are orthogonal with respect to a measure, Favard's Theorem establishes that they satisfy a three term recurrence relation of the form ($p_{-1}=0$)
\begin{equation}\label{fvo}
xp_n(x)=a_{n+1}p_{n+1}(x)+b_np_n(x)+c_np_{n-1}(x), \quad n\ge 0,
\end{equation}
where $(a_n)_n$, $(b_n)_n$ and $(c_n)_n$ are sequences of real numbers with $a_{n}c_n\not =0$, $n\ge 1$ ($a_{n}c_n>0$, $n\ge 1$,
if we work with positive measures). The three term recurrence formula is saying that  the polynomials $p_n$, as functions of the discrete parameter $n$, are also eigenfunctions of a second order difference operator.

Since the eighties a lot of effort has been devoted to find Krall polynomials. Roughly speaking,
one can construct Krall polynomials $q_n(x)$, $n\ge 0$,  by using the Laguerre $x^\alpha e^{-x}$, or Jacobi weights $(1-x)^\alpha(1+x)^\beta$, assuming that one or two of the parameters $\alpha$ and $\beta$ are nonnegative integers and adding a linear combination of Dirac deltas and their derivatives at the endpoints of the orthogonality interval (\cite{Kr2}, \cite{koekoe}, \cite{koe}, \cite{koekoe2}, \cite{L1}, \cite{L2}, \cite{GrH1}, \cite{GrHH}, \cite{GrY}, \cite{Plamen1}, \cite{Plamen2}, \cite{Zh}). This procedure of adding deltas seems not to work if we want to construct Krall discrete polynomials from the classical discrete measures of Charlier, Meixner, Krawtchouk and Hahn (see the papers \cite{BH} and \cite{BK} by Bavinck, van Haeringen and Koekoek answering, in the negative,
a question posed by R. Askey in 1991).

We consider higher order difference operators of the form
\begin{equation}\label{doho}
D=\sum_{l=s}^rh_l\Sh _l, \quad s\le r, s,r\in \ZZ,
\end{equation}
where $h_l$ are polynomials and $\Sh_l$ stands for the shift operator $\Sh_l(p)=p(x+l)$. If $h_r,h_s\not =0$, the order of $D$ is then $r-s$. We also say that $D$ has genre $(s,r)$.

The first examples of orthogonal polynomials $(q_n)_n$ which are common eigenfunctions of a difference operator (\ref{doho}) of order bigger than $2$
have been introduced by one of us in \cite{du0}. Orthogonalizing measures for these families of polynomials are generated by multiplying the classical discrete weights by certain variants of the annihilator polynomial of a set of numbers. For a finite set of numbers $F$, this annihilator polynomial $\pp$ is defined by
\begin{equation*}\label{dp1i}
\pp(x)=\prod _{f\in F}(x-f).
\end{equation*}
The kind of transformation which consists in multiplying a measure $\mu$ by a polynomial $r$ is called a Christoffel transform.
It has a long tradition in the context of orthogonal polynomials: it goes back a century and a half ago when
E.B. Christoffel (see \cite{Chr} and also \cite{Sz}) studied it for the particular case $r(x)=x$.

Using this  annihilator polynomial (and other of its variants), a number of conjectures have been posed in \cite{du0}. The purpose of this paper is to prove those conjectures for the families of Charlier, Meixner and Krawtchouk polynomials. These families have in common that the eigenvalues for their associated second order difference operator are linear sequences in $n$.

The conjectures we will prove here are the following.
 Write $n_F$
for the number of elements of $F$.

\bigskip
\noindent
\textbf{Conjecture A.}
Let $\rho_{a} $ be the Charlier weight (see Section \ref{sch} for details). For a finite set $F$ of positive integers consider the measure
$$
\rho_a^F =\prod _{f\in F}(x-f)\rho_a ,
$$
and assume that the measure $\rho_a^F$ has an associated sequence $(q_n)_n$
of orthogonal polynomials (notice that, depending on the set $F$, the measure $\rho_a^F$ can be signed).
Then the polynomials $(q_n)_n$ are eigenfunctions of a difference operator of the form (\ref{doho}) with
$-s=r=\displaystyle \sum _{f\in F}f-\frac{n_F(n_F-1)}{2}+1.$

\bigskip
\noindent
\textbf{Conjecture B.}
Let $\rho_{a,c} $ be the Meixner weight (see Section \ref{sme} for details). For two finite sets $F_1, F_2$ of positive integers (the empty set is allowed, in which case $\pp(x)=1$)
consider  the measure
$$
\rho_{a,c}^{F_1,F_2}=\prod _{f\in F_1}(x+c+f)\prod _{f\in F_2}(x-f)\rho_{a,c}.
$$
Assume that the measure $\rho_{a,c}^{F_1,F_2}$ has an associated sequence $(q_n)_n$
of orthogonal polynomials. Then they are eigenfunctions of a difference operator of the form (\ref{doho}) with
$-s=r=\displaystyle r_{F_1}+r_{F_2}-1$ and $r_{F_i}=\displaystyle \sum _{f\in F_i}f-\frac{n_{F_i}(n_{F_i}-1)}{2}+1$.

\bigskip
\noindent
\textbf{Conjecture C.}
Let $\rho_{a,N} $ be the Krawtchouk weight (see Section \ref{skr} for details). For two finite sets $F_1, F_2$ of positive integers (the empty set is allowed) consider  the measure
$$
\rho_{a,N}^{F_1,F_2}=\prod _{f\in F_1}(x-f)\prod _{f\in F_2}(N-1-f-x)\rho_{a,N}.
$$
If $N$ is a positive integer, we assume that $f_{1,M},f_{2,M}<N/2$ (so that $F_1\cap \{ N-1-f,f\in F_2\}=\emptyset$),
where $f_{i,M}=\max F_i$.
Assume that the measure $\rho_{a,N}^{F_1,F_2}$ has an associated sequence $(q_n)_n$
of orthogonal polynomials. Then they are eigenfunctions of a difference operator of the form (\ref{doho}) with
$-s=r=\displaystyle r_{F_1}+r_{F_2}-1$ and $r_{F_i}=\displaystyle \sum _{f\in F_i}f-\frac{n_{F_i}(n_{F_i}-1)}{2}+1$.

\bigskip
We use three  ingredients to prove these conjectures.

\medskip
\noindent
\textbf{The first ingredient: $\D$-operators}. This is an abstract concept which has shown to be very useful to generate Krall, Krall discrete and $q$-Krall families of polynomials.
To define a $\D$-operator, we need a sequence of polynomials $(p_n)_n$, $\deg p_n=n$, and an algebra of operators $\A $ acting in the linear space of
polynomials $\mathbb{P}$.
In addition, we assume that the polynomials $p_n$, $n\ge 0$, are eigenfunctions of certain operator $D_p\in \A$ with eigenvalues that are linear in $n$: that is, we assume that $D_p(p_n)=np_n$, $n\ge 0$. Observe that no orthogonality conditions are imposed at this stage on the polynomials $(p_n)_n$.
Given  a sequence of numbers $(\varepsilon_n)_n$, a $\D$-operator $\D$ associated to the algebra $\A$ and the sequence of polynomials
$(p_n)_n$ is defined  by linearity in $\PP$ from $\D(p_n)=\sum _{j=1}^n (-1)^{j+1}\varepsilon_n\cdots \varepsilon_{n-j+1}p_{n-j}$, $n\ge 0$.
We then say that $\D$ is a $\D$-operator if $\D\in \A$.

Using $\D$-operators we can construct from the polynomials $(p_n)_n$ a huge class of families of polynomials $(q_n)_n$ which are also eigenfunctions of operators in the algebra $\A$. Indeed, assume we have $m$ $\D$-operators $\D_1, \D_2, \ldots, \D_m$ (not necessarily different) defined by the sequences $(\varepsilon _n^h)_n$, $h=1,\ldots , m$.

For the sake of simplicity, we assume here in the Introduction that for $h=1,2,\ldots,m$, the sequence $(\varepsilon_{n}^{h})_n$ is constant and non null in $n$ (actually, that is what happens for the $\D$-operators associated to the classical discrete families of Charlier, Meixner and Krawtchouk). We write $\xi_{x,i}^h$, $i\in\ZZ$ and $h=1,2,\ldots,m$, for the auxiliary functions defined by
\begin{equation*}\label{defxii}
\xi_{x,i}^h=\prod_{j=0}^{i-1}\varepsilon_{x-j}^{h}, \quad i\ge 1,\quad \quad \xi_{x,0}^h=1,\quad\quad \xi_{x,i}^h=\frac{1}{\xi_{x-i,-i}^h},\quad i\leq-1.
\end{equation*}

For $m$ arbitrary polynomials $R_1, R_2, \ldots, R_m$, we consider the sequence of polynomials $(q_n)_n$ defined by
\begin{equation}\label{qusi}
q_n(x)=\begin{vmatrix}
               p_n(x) & -p_{n-1}(x) & \cdots & (-1)^mp_{n-m}(x) \\
               \xi_{n,m}^1R_1(n) &  \xi_{n-1,m-1}^1R_1(n-1) & \cdots & R_1(n-m) \\
               \vdots & \vdots & \ddots & \vdots \\
                \xi_{n,m}^mR_m(n) &  \xi_{n-1,m-1}^mR_m(n-1) & \cdots & R_m(n-m)
             \end{vmatrix}.
\end{equation}
To ensure that the polynomial $q_n$, $n\ge 0$, has degree $n$ (this is necessary if we want the polynomials $(q_n)_n$ to be orthogonal), we assume that the (quasi) Casorati determinant
$$
\Omega (x)=\det \left(\xi_{x-j,m-j}^lR_l(x-j)\right)_{l,j=1}^m,
$$
satisfies that $\Omega(n)\not =0$, $n\ge 0$.

Notice that the dependence in $x$ in the determinant (\ref{qusi}) appears only in the first row, and hence $q_n$ is a linear combination of $m$ consecutive $p_n$'s. The magic of  $\D$-operators is that, whatever the polynomials $R_j$'s are, there always exists an operator $D_q$ in the algebra $\A$ for which the polynomials $q_n$, $n\ge 0$, are eigenfunctions. Moreover, the operator $D_q$ can be explicitly constructed from the operator $D_p$ using the $\D$-operators $\D_j$, $j=1,\ldots , m$. To stress the dependence of the polynomials $q_n$, $n\ge 0$, on the polynomials $R_j$, $j=1,\ldots , m$, we write $q_n=\Cas_{n}^{R_1,\ldots , R_m}$. Polynomials defined by  other similar forms of Casorati determinants have also a long tradition in the context of orthogonal polynomials and bispectral polynomials. Casorati determinants appear, for instance, to express orthogonal polynomials with respect to the Christoffel or Geronimus transform of a measure. See  \cite{Sz}, Th. 2.5 for the Christoffel transform and \cite{Gs}, \cite{Gs2} or \cite{Zh0} (and the references therein) for the Geronimus transform. The Geronimus transform associated to the polynomial $q(x)=(x-f_1)\cdots (x-f_k)$ is defined as follows: we say that $\tilde \mu$ is a Geronimus transform of $\mu$ if $q\tilde \mu =\mu$. Notice that the Geronimus transform is reciprocal of the Christoffel transform. Geronimus transform are sometimes called Darboux transform. The reason is the following. As we have already mentioned, the three term recurrence relation (\ref{fvo}) for the orthogonal polynomials with respect to $\mu$ can be rewritten as $xp_n=J(p_n)$, where $J$ is the second order difference operator $J=a_{n+1}\Sh_1+b_n\Sh_0+c_n\Sh_{-1}$ (acting on the discrete variable $n$).
For any $\lambda \in \CC $, decompose $J$ into $J=AB+\lambda I$
whenever it is possible, where $A=\alpha_n\Sh_0+\beta_n\Sh_1$ and $B= \delta_n\Sh_{-1}+\gamma_n\Sh_0$. We then call $\tilde J=BA+\lambda I$
a Darboux transform of $J$ with parameter $\lambda$. It turns out that the second order difference operator $\tilde J$ associated to a Geronimus transform $\tilde \mu$ of $\mu$ can be obtained by applying a sequence of  $k$ Darboux transforms (with parameters $f_i$, $i=1,\ldots, k$) to the operator $J$ associated to the measure $\mu$. This kind of Darboux transform has been used by Gr\"unbaum, Haine, Hozorov, Yakimov or Iliev to construct Krall and $q$-Krall polynomials. For the particular cases of Laguerre, Jacobi or Askey-Wilson polynomials, one can found Casorati determinants similar to (\ref{qusi}) in \cite{GrHH}, \cite{GrY}, \cite{HP}, \cite{Plamen1} or \cite{Plamen2}.

The concept of $\D$-operators have been introduced in \cite{du1} by one of us (see also \cite{AD}). Among other things, it has been used to prove the particular cases of conjectures A, B and C above when the finite sets of positive integers involved are of the form  $\{1, 2, 3, \ldots, k\}$. This
particular case corresponds with a determinant of size $2\times 2$ ($m=1$) in (\ref{qusi}) (in other words, the polynomials $q_n$ (\ref{qusi}) are a linear combination of two consecutive $p_n$'s). Information about $\D$-operators for the classical discrete families of Charlier, Meixner and Krawtchouk is summarized in the following table.

\bigskip
\begin{center}
\begin{tabular}{||l |c|c| r||}
\hline
Classical discrete family & \# $\D$-operators & $\varepsilon_n$  & $\D$-operators \\
\hline
Charlier & 1 & $1$ & $\D=\nabla$ \\
\hline
Meixner & 2 & $\frac{a}{1-a}$ & $\D_1=\frac{a}{1-a}\Delta$ \\
\hline
 &  & $\frac{1}{1-a}$ & $\D_2=\frac{1}{1-a}\nabla$ \\
\hline
Krawtchouk & 2 & $\frac{1}{1+a}$ & $\D_1=\frac{1}{1+a}\nabla$ \\
\hline
 &  & $\frac{-a}{1+a}$ & $\D_2=\frac{-a}{1+a}\Delta$ \\
\hline
\end{tabular}
\end{center}
\bigskip

\noindent
\textbf{The second ingredient.} With the second ingredient, orthogonality with respect to a measure enters into the picture. Indeed, even if we assume that the polynomials $(p_n)_n$ are orthogonal, only for a convenient choice of the polynomials $R_j$, $j=1,\ldots, m$, the polynomials (\ref{qusi}) $q_n=\Cas_n^{R_{1},\ldots , R_{m}}$, $n\ge 0$, are also orthogonal with respect to a measure. We now assume that the polynomials $(p_n)_n$ are any of the classical discrete families of Charlier, Meixner or Krawtchouk. With this assumption the second ingredient establishes how to chose the polynomials $R_j$'s such that the polynomials $q_n=\Cas_n^{R_1,\ldots ,R_m}$ (\ref{qusi}) are also orthogonal with respect to a measure. This second ingredient turns into a very nice symmetry between the family $(p_n)_n$ and the polynomials $R_j$'s. Indeed, for $m=1$, it has been shown in \cite{du1} that the polynomial $R_1$ can be chosen in the same family as $(p_n)_n$ but with different parameters. We will see here that  the same choice also works for an arbitrary positive integer $m$. More precisely, given a $\D$-operator for the family $(p_n)_n$ and a nonnegative integer $j$ we provide a polynomial $R_j$ of degree $j$ such that
for any different nonnegative integers $g_1,\ldots , g_m$, the polynomials $\Cas_n^{R_{g_1},\ldots , R_{g_m}}$, $n\ge 0$, are orthogonal with respect to a measure. The choice of these polynomials $R_j$ is summarized in the following table, where the symmetry mentioned above is explicit.

\bigskip
\begin{center}
\begin{tabular}{||l |c| r||}
\hline
Classical discrete family & $\D$-operators & $R_j(x)$ \\
\hline
Charlier: $c_n^a$, $n\ge 0$ & $\nabla$ & $c_j^{-a}(-x-1)$, $j\ge 0$  \\
\hline
Meixner: $m_n^{a,c}$, $n\ge 0$ & $\frac{a}{1-a}\Delta$ & $m_j^{1/a,2-c}(-x-1)$, $j\ge 0$  \\
\hline
 & $\frac{1}{1-a}\nabla$ & $m_j^{a,2-c}(-x-1)$, $j\ge 0$ \\
\hline
Krawtchouk: $k_n^{a,N}$, $n\ge 0$ &  $\frac{1}{1+a}\nabla$ & $k_j^{a,-N}(-x-1)$, $j\ge 0$  \\
\hline
 &  $\frac{-a}{1+a}\Delta$ & $k_j^{1/a,-N}(-x-1)$, $j\ge 0$ \\
\hline
\end{tabular}
\end{center}
\bigskip

\noindent
\textbf{The third ingredient.} We still need a last ingredient for identifying the measure $\tilde \rho$ with respect to which the polynomials $q_n=\Cas_n^{R_{g_1},\ldots ,R_{g_m}}$ (\ref{qusi}) are orthogonal. These polynomials depend on the set of indices $G=\{g_1,\ldots , g_m\}$ (the degrees of the polynomials $R_{g_j}$). It turns out that the orthogonalizing measure for the polynomials $(\Cas_n^{R_{g_1},\ldots ,R_{g_m}})_n$ is one of the measures which appear in conjectures A, B and C above. These measures depend on certain finite sets $F$'s of positive integers. The third ingredient establishes the relationship between these sets $F$'s and the set $G$. This relationship is given by suitable transforms defined in the set $\Upsilon$ formed by all finite sets of positive integers. One of these transforms is the involution defined by
\begin{equation}\label{dinvi}
I(F)=\{1,2,\ldots, f_k\}\setminus \{f_k-f,f\in F\},
\end{equation}
where $f_k=\max F$ and $k$ the number of elements of $F$.

\medskip

The contents of the paper are as follows. The first ingredient (that is, how to use $\D$-operators for constructing the polynomials $(q_n)_n$ (\ref{qusi}) and its associated operator $D_q\in \A$ for which they are eigenfunctions) is explained in Section 3. The proofs for this
section, which can be quite technical at certain points, are given separately in Section \ref{sproofs}. An explicit construction of the operator $D_q$ is included. This allows us to find its order when we particularize for the classical discrete families of Charlier, Meixner and Krawtchouk.
Section \ref{ssi} is devoted to the second and third ingredients. In particular, we develop a strategy to find the polynomials $R_j$, $j\ge 0$, for which the polynomials $q_n=\Cas_n^{R_{g_1},\ldots ,R_{g_m}}$ (\ref{qusi}) are also orthogonal with respect to a measure. This is completed in Sections \ref{sch}, \ref{sme} and \ref{skr}, where we study the particular cases of Charlier, Meixner and Krawtchouk, respectively. In each case, we prove the corresponding conjectures A, B and C above.

To make this introduction more useful to the reader, we include here the main result for Charlier polynomials (conjecture A above is an easy corollary of it).

\begin{theorem}\label{mthch} Let $F$ be a finite set of positive integers and consider the involuted set $I(F)=G=\{ g_1,\ldots, g_m\}$ with $g_i<g_{i+1}$, where the involution $I$ is defined by (\ref{dinvi}). Let $\rho_a$ be the Charlier measure (\ref{Chw}) and $(c_n^a)_n$ its sequence of orthogonal polynomials defined by (\ref{Chpol}). Assume that $\Omega_G (n)\not =0$, $n\ge 0$, where the $m\times m$ Casorati determinant $\Omega _G$ is defined by
$$
\Omega_G (n)=\det \left(c^{-a}_{g_l}(-n-j-1)\right)_{l,j=1}^m.
$$
We then define the sequence of polynomials $(q_n)_n$ by
\begin{equation}\label{quschi}
q_n(x)=\begin{vmatrix}
c^a_n(x) & -c^a_{n-1}(x) & \cdots & (-1)^mc^a_{n-m}(x) \\
c^{-a}_{g_1}(-n-1) & c^{-a}_{g_1}(-n) & \cdots &
c^{-a}_{g_1}(-n+m-1) \\
               \vdots & \vdots & \ddots & \vdots \\
               c^{-a}_{g_m}(-n-1) & \displaystyle
               c^{-a}_{g_m}(-n) & \cdots &c^{-a}_{g_m}(-n+m-1)
             \end{vmatrix}.
\end{equation}
Then

\noindent
(1) The polynomials $(q_n)_n$ are orthogonal with respect to the measure
$$
\tilde \rho_a^F=\prod_{f\in F}(x+f_k+1-f)\rho_a(x+f_k+1),
$$
where $f_k=\max F$ and $k$ the number of elements of $F$.

\noindent
(2) The polynomials $(q_n)_n$ are eigenfunctions of a higher order difference operator of the form (\ref{doho}) with
$-s=r=\displaystyle \sum _{f\in F}f-\frac{k(k-1)}{2}+1$
(which we explicitly construct).
\end{theorem}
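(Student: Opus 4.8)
The plan is to prove the two assertions by quite different means: part (2) is essentially a corollary of the first ingredient, whereas part (1) requires identifying the orthogonality measure and is the substantive part.

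For part (2), I would first observe that (\ref{quschi}) is precisely the Casorati determinant (\ref{qusi}) specialized to the Charlier data of the first table: the single $\D$-operator $\D=\nabla$ has constant sequence $\varepsilon_n=1$, so every factor $\xi^h_{n,i}$ equals $1$, and the lower rows are the polynomials $R_{g_l}(x)=c^{-a}_{g_l}(-x-1)$ sampled at $x=n,n-1,\dots,n-m$. Hence $q_n=\Cas_n^{R_{g_1},\dots,R_{g_m}}$, and by the first ingredient there is automatically an operator $D_q\in\A$ in the Charlier algebra having the $q_n$ as eigenfunctions. It then remains only to read off the genre from the explicit construction of $D_q$ given in Section~3: inserting $\D=\nabla$ (a difference operator of genre $(-1,0)$) and $\deg R_{g_l}=g_l$, the symmetric genre $-s=r$ should drop out of the construction, and the stated value $r=\sum_{f\in F}f-\tfrac{k(k-1)}2+1$ then follows by rewriting the data of $G=I(F)$ in terms of $F$. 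Here I would use the elementary involution identities, namely that $G$ has cardinality $f_k-k+1$ and that $\sum_{g\in G}g=\tfrac{f_k(f_k+1)}2-kf_k+\sum_{f\in F}f$; this last step is pure bookkeeping.

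For part (1), the first move is to absorb the shift. Writing $u=x+f_k+1$, the measure becomes $\tilde\rho^F_a(x)=\prod_{f\in F}(u-f)\,\rho_a(u)$, i.e. a genuine Christoffel transform of the Charlier weight by the annihilator polynomial $P(u)=\prod_{f\in F}(u-f)$. Thus orthogonality of $(q_n)_n$ with respect to $\tilde\rho^F_a$ is equivalent, after the translation $x\mapsto u-f_k-1$, to orthogonality with respect to $P\rho_a$, and for this I may invoke the classical Christoffel determinantal formula (\cite{Sz}, Th.~2.5): the orthogonal polynomials for $P\rho_a$ are given, up to normalization, by a determinant of size $k+1$ whose top row is $c^a_n(u),\dots,c^a_{n+k}(u)$ and whose remaining rows are the evaluations $\big(c^a_{n+i}(f)\big)_i$, $f\in F$, the whole divided by $P(u)$.

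The heart of the argument is then a determinantal identity: I must show that this Christoffel determinant over the nodes $F$, with parameter $a$ and size $k+1$, agrees up to a nonzero constant with the Casorati determinant (\ref{quschi}), which has parameter $-a$, the involuted nodes $G=I(F)$, and size $m+1$. Two structural features of Charlier polynomials should bridge the two. First, self-duality $c^a_n(x)=c^a_x(n)$ turns each bordering row $\big(c^a_{n+i}(f)\big)_i$ into the degree-$f$ polynomial $c^a_f$ sampled at the consecutive arguments $n,n+1,\dots$, which is exactly a Casorati row in the variable $n$. Second, a reflection/connection formula exchanging $a\leftrightarrow-a$ under $x\mapsto-x-1$ should convert the sampling points and the parameter of these rows into those appearing in (\ref{quschi}). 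The passage from the nodes $F$ (size $k$) to the complementary nodes $G$ (size $m$), which changes the size of the determinant, is the combinatorial content of the involution (\ref{dinvi}): the reflected set $\{f_k-f:f\in F\}$ and $G$ are complementary in $\{0,1,\dots,f_k\}$, and a complementary-minor (Sylvester/Jacobi type) identity should interchange the two determinantal expansions. I expect this identity to be the main obstacle: each ingredient---self-duality, the $a\mapsto-a$ reflection, and the complementation over the involuted set---is available, but fusing them into a single equality while controlling the $f_k$-shift, the power of $P(u)$, and all sign and normalization constants is delicate. Once the identity is established, $\deg q_n=n$ follows from the hypothesis $\Omega_G(n)\neq0$ (the leading coefficient of $q_n$ is exactly $\Omega_G(n)$), and both the orthogonality of part (1) and the explicit form of $\tilde\rho^F_a$ are then immediate.
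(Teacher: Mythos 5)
Your part (2) is essentially the paper's own argument: identify (\ref{quschi}) with (\ref{qusi}) for the Charlier $\D$-operator $\nabla$ (so $\varepsilon_n=1$, all $\xi^h_{x,i}=1$, $R_{g_l}(x)=c^{-a}_{g_l}(-x-1)$), invoke Theorem \ref{Teor1} with $S=1$, and get the order from the degree of $\Omega_G$ (Lemma \ref{lgp1}) plus the involution bookkeeping $m=f_k-k+1$ and $\sum_{g\in G}g=f_k(f_k+1)/2-kf_k+\sum_{f\in F}f$. The one step you elide is the check that no cancellation lowers the order: the paper verifies that the coefficient of $\Sh_{-r}$ in $P(D_a)$ has degree exactly $r$ in $x$, whereas (by Lemma \ref{lgp2}) each term $M_h(D_a)\nabla R_h(D_a)$ contributes to $\Sh_{-r}$ a coefficient of degree at most $r-1$ and nothing at all to $\Sh_r$; only then is the genre exactly $(-r,r)$.

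The genuine gap is in part (1). Your plan reduces the theorem to a determinantal identity: that the Szeg\"o--Christoffel determinant of size $k+1$ (parameter $a$, nodes $F$) agrees, up to a nonzero factor depending on $n$, with the Casorati determinant (\ref{quschi}) of size $m+1$ (parameter $-a$, nodes $G=I(F)$). You call this identity ``the main obstacle'' and offer only its plausibility: self-duality (\ref{Chdua}), an $a\mapsto-a$ reflection, and a complementary-minor argument. Those are indeed the right ingredients, and such an identity is true, but it is a result of essentially the same depth as the statement being proved: the two determinants have different sizes ($k+1$ versus $m+1=f_k-k+2$), so after dualizing the bordered rows one must realize both as complementary minors of a single structured matrix and control the size change, the $f_k$-shift, the parameter flip, and all signs and normalizations; none of this is a routine assembly of the three ingredients, and asserting that it ``should'' work is not a proof. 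There is also a secondary problem: \cite{Sz}, Th.~2.5 is stated for measures for which orthogonal polynomials exist, while $\tilde\rho^F_a$ is in general signed; the quasi-definite version of the Christoffel formula is valid only when certain kernel determinants do not vanish, and tying that non-vanishing to the hypothesis $\Omega_G(n)\not=0$ requires the very identity in question, so the logic must be set up with care to avoid circularity. For contrast, the paper's proof of part (1) avoids any such identity: it establishes the three moment formulas (\ref{foeq1ch}), (\ref{foeq2ch}), (\ref{foeq3ch}), from which orthogonality follows by the general Lemma \ref{lort2}; the formulas themselves are proved by induction on the set $F$ (peeling off the smallest element reduces to a smaller set, via Lemma \ref{lort1}) and, for singletons, by the duality (\ref{Chdua}) together with differentiation in the parameter $a$, the relations (\ref{Chlad}), and analyticity in $a$. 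So your route for part (1) is viable in principle but is left unproven exactly at its crux, which is where all the difficulty of the theorem resides.
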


We stress the importance that symmetries have in this paper. They appear explicitly in the second ingredient above (see, for instance, the formula (\ref{quschi})) but also they are implicit in the first ingredient (for instance, Schur symmetric functions play an important role in the computation of the order of the operators with respect to which the polynomials $(q_n)_n$ (\ref{qusi}) are eigenfunctions).

\section{Preliminaries}
For a linear operator $D:\PP \to \PP$ and a polynomial $P(x)=\sum _{j=0}^ka_jx^j$, the operator $P(D)$ is defined in the usual way
$P(D)=\sum _{j=0}^ka_jD^j$.

Let $\mu$ be a moment functional on the real line, that is, a linear mapping $\mu :\PP \to \RR$.
The $n$-th moment of $\mu $ is defined by $\mu_n=\langle \mu, x^n\rangle $.
It is well-known that any moment functional on the real line can be represented by integrating with respect to a Borel measure
(positive or not) on the real line
(this representation is not unique \cite{du-1}).
If we also denote this measure by $\mu$, we have $\langle \mu,p\rangle=\int p(x)d\mu(x)$ for all polynomial $p\in \PP$. Taking this into account,
we will conveniently use along this paper one or other terminology (orthogonality with respect to a moment functional or with
respect to a measure). We say that a sequence of polynomials $(p_n)_n$, $p_n$ of degree $n$, $n\ge 0$, is orthogonal with respect to the moment functional $\mu$ if $\langle \mu, p_np_m\rangle=0$, for $n\not =m$ and $\langle \mu, p_n^2\rangle\not =0$.

\bigskip
The kind of transformation which consists in multiplying a moment functional $\mu$ by a polynomial $r$ is called a Christoffel transform. The new
moment functional $r\mu$ is defined by $\langle r\mu,p\rangle =\langle \mu,rp\rangle $.

For a real number $\lambda$, the moment functional $\mu (x+\lambda)$ is defined in the usual way
$\langle \mu (x+\lambda),p\rangle =\langle \mu ,p(x-\lambda)\rangle $. Hence, if $(p_n)_n$ are orthogonal polynomials with respect
to $\mu$ then $(p_n(x+\lambda))_n$ are orthogonal with respect to $\mu(x+\lambda)$.

We will use the following straightforward Lemma in relation with the second ingredient of our strategy (see the Introduction).

\begin{lemma} Let $F$ be a finite set of different complex numbers $F=\{f_i,i=1,\ldots , k\}$. Write $\p _F(x)=\prod_{i=1}^k(x-f_i)$. Then
for a monic polynomial $s$
\begin{equation}\label{sumuif}
\sum_{i=1}^k\frac{s(f_i)}{\p '_F(f_i)}=\begin{cases} 0,&\mbox{for $\deg (s) \le k-2$,}\\
1,&\mbox{for $\deg(s)=k-1$.}\end{cases}
\end{equation}
\end{lemma}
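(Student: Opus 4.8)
The plan is to read the sum in (\ref{sumuif}) as the residue sum of a proper rational function and to obtain both cases from a single degree count. First I would introduce the rational function $R(x)=s(x)/\p_F(x)$. Since the $f_i$ are pairwise distinct, $\p_F$ has only simple zeros, and since $\deg s\le k-1<k=\deg \p_F$, the function $R$ is proper. It therefore admits a partial-fraction decomposition
$$
R(x)=\sum_{i=1}^k\frac{c_i}{x-f_i},
$$
and the standard residue formula at a simple pole gives $c_i=s(f_i)/\p'_F(f_i)$. Thus the quantity to be computed is exactly $\sum_{i=1}^k c_i$.

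To extract $\sum_i c_i$ I would multiply by $x$ and let $x\to\infty$. From the partial fractions, $xR(x)=\sum_{i=1}^k c_i x/(x-f_i)\to \sum_{i=1}^k c_i$. On the other hand $xR(x)=xs(x)/\p_F(x)$ is a quotient of polynomials whose numerator $xs(x)$ has degree $1+\deg s$ and whose denominator $\p_F$ is monic of degree $k$.

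The two cases of (\ref{sumuif}) now follow by comparing these degrees, which is the only step requiring any care. If $\deg s\le k-2$, then $1+\deg s\le k-1<k$, so $xR(x)\to 0$ and the sum vanishes. If $\deg s=k-1$ and $s$ is monic, then $xs(x)$ is monic of degree exactly $k$, matching the monic denominator $\p_F$, whence $xR(x)\to 1$; this is precisely where the monicity hypothesis enters to pin the value at $1$. An equivalent route, which I would keep in reserve as a check, is to note that $\sum_i s(f_i)/\p'_F(f_i)$ is the coefficient of $x^{k-1}$ in the Lagrange interpolating polynomial of $s$ at $f_1,\dots,f_k$; since interpolation is exact in degree $\le k-1$ that polynomial is $s$ itself, and reading off its top coefficient gives $0$ or $1$ as required. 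Given how elementary this is, the main (very mild) obstacle is merely the bookkeeping of degrees and of the monic normalization.
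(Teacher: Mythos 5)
Your proof is correct and follows essentially the same route as the paper, which simply observes that the terms $s(f_i)/\p'_F(f_i)$ are the residues of the rational function $s(z)/\p_F(z)$ and leaves the degree count implicit. Your partial-fraction decomposition and the limit of $xR(x)$ as $x\to\infty$ are exactly the fleshed-out version of that residue argument, so there is nothing to add.
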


\begin{proof}
It is enough to take into account that $s(f_i)/\p '_F(f_i)$, $i=1,\ldots ,k $, are the residues of the rational function $s(z)/\p_F(z)$.
\end{proof}

\section{$\D$-operators}

The concept of $\D$-operator was introduced by one of us in the paper \cite{du1}. In  \cite{du1} and \cite{AD} it has been showed that $\D$-operators turn out to be an extremely useful tool of a unified method
to generate families of polynomials which are eigenfunctions of higher order differential, difference or $q$-difference operators.
Hence, we start by reminding the concept of $\D$-operator.

The starting point is a sequence of polynomials $(p_n)_n$, $\deg p_n=n$, and an algebra of operators $\A $ acting in the linear space of
polynomials $\mathbb{P}$. For the classical discrete polynomials, we will consider the algebra $\A$ formed by all finite order difference operators of the form (\ref{doho})
\begin{equation}\label{algdiffd}
\A =\left\{ \sum_{j=s}^rh_j\mathfrak{s}_j : h_j\in \PP, j=s,\ldots,r, s\leq r \right\}.
\end{equation}

In addition, we assume that the polynomials $p_n$, $n\ge 0$, are eigenfunctions of certain operator $D_p\in \A$. We write $(\theta_n)_n$ for the corresponding eigenvalues, so that $D_p(p_n)=\theta_np_n$, $n\ge 0$. In this paper we only consider the case when the sequence of eigenvalues $(\theta _n)_n$ is linear in $n$. For $\D$-operators associated to polynomials $(p_n)_n$ for which the sequence of eigenvalues $(\theta _n)_n$ is not linear in $n$ see \cite{du1}.

Given  a sequence of numbers $(\varepsilon_n)_n$, a $\D$-operator associated to the algebra $\A$ and the sequence of polynomials
$(p_n)_n$ is defined as follows.
We first consider  the operator $\D :\PP \to \PP $ defined by linearity
from
\begin{equation}\label{defTo}
\D (p_n)=\sum _{j=1}^n (-1)^{j+1}\varepsilon _n\cdots \varepsilon _{n-j}p_{n-j},\quad n\ge 0.
\end{equation}
We then say that $\D$ is a $\D$-operator if $\D\in \A$.

The following Lemma was proved in \cite{du1} and shows how to use $\D$-operators to construct new sequences of polynomials $(q_n)_n$ such that there exists an operator $D_q\in \A$ for which they are eigenfunctions.

\begin{lemma}(Lemma 3.2 of \cite{du1})\label{fl1v}
Let $\A$ and $(p_n)_n$ be, respectively, an algebra of operators acting in the linear space of polynomials, and a sequence of polynomials $(p_n)_n$,
$\deg p_n=n$. We assume that there exists an operator $D_p\in \A$ satisfying that
$D_p(p_n)=np_n$, $n\ge 0$. We also have a sequence of numbers $(\varepsilon_n )_n$ which defines a $\D$-operator  for $(p_n)_n$
and $\A$ (see \eqref{defTo}).
For an arbitrary polynomial $R$ such that $R(n)\not=0$, $n\ge 0$, we define a new polynomial $P$ by $P(x)-P(x-1)=R(x)$.
We finally define the sequence of polynomials $(q_n)_n$ by $q_0=1$ and
\begin{equation*}\label{defqng}
q_n=p_n+\beta_np_{n-1},\quad n\ge 1,
\end{equation*}
where the numbers $\beta_n$, $n\ge 0$, are given by
\begin{equation*}\label{defbetng}
\beta_n=\varepsilon_n \frac{R(n)}{R(n-1)}, \quad n\ge 1.
\end{equation*}
Then $D_q(q_n)=P(n)q_n$ where the operator $D_q$ is defined by
\begin{equation*}\label{defD}
D_q=P(D_p)+\D R(D_p).
\end{equation*}
Moreover $D_q\in \A$.
\end{lemma}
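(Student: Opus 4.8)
The plan is to verify the eigenvalue identity $D_q(q_n)=P(n)q_n$ by a direct computation, the only structural input being that $p_n$ is an eigenfunction of $D_p$ with eigenvalue $n$. From $D_p(p_n)=np_n$ it follows that any polynomial in $D_p$ acts diagonally on the basis $(p_n)_n$: for a polynomial $S$ one has $S(D_p)(p_n)=S(n)p_n$, and in particular $P(D_p)(p_n)=P(n)p_n$ and $R(D_p)(p_n)=R(n)p_n$. Expanding $D_q(q_n)$ with $D_q=P(D_p)+\D R(D_p)$ and $q_n=p_n+\beta_np_{n-1}$ by linearity, and using $\D R(D_p)(p_k)=\D(R(k)p_k)=R(k)\D(p_k)$, I would first reduce the claim to
\begin{equation*}
D_q(q_n)=P(n)p_n+\beta_nP(n-1)p_{n-1}+R(n)\D(p_n)+\beta_nR(n-1)\D(p_{n-1}).
\end{equation*}

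The crux is to control the two terms involving $\D$. From the definition \eqref{defTo} I would extract the first-order recurrence obtained by factoring the common factor $\varepsilon_n$ out of $\D(p_n)$ and recognizing the remaining alternating sum as $-\D(p_{n-1})$; this gives $\D(p_n)+\varepsilon_n\D(p_{n-1})=\varepsilon_np_{n-1}$. The point of the prescribed weights $\beta_n=\varepsilon_nR(n)/R(n-1)$ is precisely that $\beta_nR(n-1)=\varepsilon_nR(n)$, so that
\begin{equation*}
R(n)\D(p_n)+\beta_nR(n-1)\D(p_{n-1})=R(n)\bigl(\D(p_n)+\varepsilon_n\D(p_{n-1})\bigr)=\varepsilon_nR(n)\,p_{n-1}.
\end{equation*}
In other words, the specific ratio $R(n)/R(n-1)$ in $\beta_n$ is exactly what forces all the lower-order contributions $p_{n-2},\dots,p_0$ coming from $\D(p_n)$ and $\D(p_{n-1})$ to cancel in pairs. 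I expect this telescoping cancellation to be the main obstacle: everything hinges on matching the recurrence satisfied by $\D$ against the chosen $\beta_n$ so that only a single multiple of $p_{n-1}$ survives.

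Once this is in hand, $D_q(q_n)$ has collapsed to a combination of $p_n$ and $p_{n-1}$ only, namely $D_q(q_n)=P(n)p_n+\bigl(\beta_nP(n-1)+\varepsilon_nR(n)\bigr)p_{n-1}$. It then remains to match the coefficient of $p_{n-1}$ against $P(n)\beta_n$, that is, to verify $\varepsilon_nR(n)=\beta_n\bigl(P(n)-P(n-1)\bigr)$; here the construction of $P$ through the first-order difference relation $P(x)-P(x-1)=R(x)$ is exactly what is needed, and it yields $D_q(q_n)=P(n)q_n$. Finally, to see $D_q\in\A$ I would note that $D_p\in\A$ forces $P(D_p),R(D_p)\in\A$ since $\A$ is an algebra, that $\D\in\A$ by hypothesis (it is a $\D$-operator), and hence that the composition $\D R(D_p)$ and the sum $P(D_p)+\D R(D_p)$ again lie in $\A$.
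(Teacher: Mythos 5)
Your strategy is the natural one, and it is in substance the $m=1$ shadow of the computation the paper carries out to prove Theorem \ref{Teor1}: let $P(D_p)$ and $R(D_p)$ act diagonally on the basis $(p_n)_n$, use the telescoping identity $\D(p_n)+\varepsilon_n\D(p_{n-1})=\varepsilon_n p_{n-1}$, note that the choice $\beta_nR(n-1)=\varepsilon_nR(n)$ is exactly what cancels all contributions below $p_{n-1}$, and then match the surviving coefficient of $p_{n-1}$. The problem is your very last step, which you assert rather than verify, and which is false. You correctly reduce everything to the identity $\varepsilon_nR(n)=\beta_n\bigl(P(n)-P(n-1)\bigr)$; but substituting $\beta_n=\varepsilon_nR(n)/R(n-1)$ together with $P(n)-P(n-1)=R(n)$ gives $\beta_n\bigl(P(n)-P(n-1)\bigr)=\varepsilon_nR(n)^2/R(n-1)$, which equals $\varepsilon_nR(n)$ only when $R(n)=R(n-1)$, i.e.\ essentially only for constant $R$. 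What your computation actually forces is $P(n)-P(n-1)=R(n-1)$: the eigenvalue polynomial must satisfy $\Delta P=R$, not $\nabla P=R$. A concrete check: take Charlier with $a=1$ (so $\varepsilon_n=1$, $\D=\nabla$) and $R(x)=x+2$; then $q_1=c_1^1+\tfrac{3}{2}c_0^1=x+\tfrac{1}{2}$, while $P(x)=x(x+1)/2+2x$ gives $D_q(q_1)=3(x-1)+3=3x$, which is not $P(1)q_1=3x+\tfrac{3}{2}$. With $P$ defined instead by $P(x)-P(x-1)=R(x-1)$ one gets $D_q(q_1)=2(x-1)+3=2x+1=P(1)q_1$, as desired.

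So the gap is real: as written, your proof claims to establish a statement that is false under the normalization $P(x)-P(x-1)=R(x)$, and a correct argument must detect this off-by-one rather than paper over it. The paper itself (which does not reprove this lemma but quotes it from \cite{du1}) confirms the corrected normalization through Theorem \ref{Teor1}: for $m=1$ the Casorati determinant \eqref{casd1} is $\Omega(x)=\xi^1_{x-1,0}R_1(x-1)=R_1(x-1)$, so \eqref{Pgs} with $S=1$ defines the eigenvalue polynomial by $P(x)-P(x-1)=R(x-1)$, and the operator \eqref{Dq} reduces (since $M_1=S(x+1)=1$) to $P(D_p)+\D R(D_p)$ --- exactly your setup, with exactly the relation your telescoping requires. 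A secondary caution of the same off-by-one type: your recurrence $\D(p_n)+\varepsilon_n\D(p_{n-1})=\varepsilon_np_{n-1}$ uses the products $\varepsilon_n\cdots\varepsilon_{n-j+1}$ of \eqref{Dh}, whereas \eqref{defTo}, which you cite, literally reads $\varepsilon_n\cdots\varepsilon_{n-j}$, under which the right-hand side would be $\varepsilon_n\varepsilon_{n-1}p_{n-1}$; the paper is internally inconsistent here and \eqref{Dh} is the intended convention, but since the entire cancellation rests on this indexing you should fix the convention explicitly. Once the relation $\Delta P=R$ replaces $\nabla P=R$, the rest of your argument (including the closing observation that $D_q\in\A$ because $\A$ is an algebra containing $D_p$ and $\D$) is correct.
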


The purpose of this Section is to extend the method in the previous Lemma for a combination of $m$, $m\ge 1$, consecutive $p_n$'s.

Instead of the polynomial $R$ and the $\D$-operator $\D$ of Lemma \ref{fl1v}, we now use $m$ arbitrary polynomials $R_1, R_2, \ldots, R_m$ and $m$ $\D$-operators $\D_1, \D_2, \ldots, \D_m$ (not necessarily different) defined by the sequences $(\varepsilon _n^h)_n$, $h=1,\ldots , m$:
\begin{equation}\label{Dh}
\mathcal{D}_h(p_n)=\sum _{j=1}^n (-1)^{j+1}\varepsilon_{n}^{h}\cdots\varepsilon_{n-j+1}^{h}p_{n-j},\quad h=1,2,\ldots,m.
\end{equation}

We will assume that for $h=1,2,\ldots,m$, the sequence $(\varepsilon_{n}^{h})_n$ is a rational function in $n$. Actually, in the examples we will study in this paper, these sequences are constant in $n$. We write $\xi_{x,i}^h$, $i\in\ZZ$ and $h=1,2,\ldots,m$, for the auxiliary functions defined by
\begin{equation}\label{defxi}
\xi_{x,i}^h=\prod_{j=0}^{i-1}\varepsilon_{x-j}^{h}, \quad i\ge 1,\quad \quad \xi_{x,0}^h=1,\quad\quad \xi_{x,i}^h=\frac{1}{\xi_{x-i,-i}^h},\quad i\leq-1.
\end{equation}
We will consider the $m\times m$ (quasi) Casorati determinant defined by
\begin{equation}\label{casd1}
\Omega (x)=\det \left(\xi_{x-j,m-j}^lR_l(x-j)\right)_{l,j=1}^m.
\end{equation}

The details of our method are included in the following Theorem (which it will be proved in Section \ref{sproofs}).

\begin{theorem}\label{Teor1} Let $\A$ and $(p_n)_n$ be, respectively, an algebra of operators acting in the linear space of polynomials, and a sequence of polynomials $(p_n)_n$, $\deg p_n=n$. We assume that $(p_n)_n$ are eigenfunctions of an operator $D_p\in \A$ with eigenvalues equal to $n$, that is, $D_p(p_n)=np_n$, $n\ge 0$. We also have $m$ sequences of numbers $(\varepsilon_{n}^{1} )_n,\ldots,(\varepsilon_{n}^{m} )_n$, which define $m$ $\D$-operators $\D_1,\ldots,\D_m$ (not necessarily different) for $(p_n)_n$ and $\A$ (see \eqref{Dh})) and assume that for $h=1,2,\ldots,m$, each sequence $(\varepsilon_{n}^{h})_n$ is a rational function in $n$ which does not vanish for $n\in \ZZ$.

Let $R_1, R_2, \ldots, R_m$ be $m$ arbitrary polynomials satisfying that $\Omega (n)\not =0$, $n\ge 0$, where $\Omega $ is the Casorati determinant defined by \eqref{casd1}.

Consider the sequence of polynomials $(q_n)_n$ defined by
\begin{equation}\label{qus}
q_n(x)=\begin{vmatrix}
               p_n(x) & -p_{n-1}(x) & \cdots & (-1)^mp_{n-m}(x) \\
               \xi_{n,m}^1R_1(n) &  \xi_{n-1,m-1}^1R_1(n-1) & \cdots & R_1(n-m) \\
               \vdots & \vdots & \ddots & \vdots \\
                \xi_{n,m}^mR_m(n) &  \xi_{n-1,m-1}^mR_m(n-1) & \cdots & R_m(n-m)
             \end{vmatrix}.
\end{equation}
For a rational function $S$ and $h=1,\ldots,m$, we define the function $M_h(x)$ by
\begin{align}\label{emeiexp}
\nonumber M_h(x)&=\sum_{j=1}^m(-1)^{h+j}\xi_{x,m-j}^hS(x+j)\times\\
&\qquad\times\det\left(\xi_{x-r,m-j-r}^{l}R_l(x-r)\right)_{\scriptsize \left\{\begin{array}{l}
                                                           l\in\{1,2,\ldots,m\}\setminus\{h\} \\
                                                            r\in\{-j+1,-j+2,\ldots,m-j\}\setminus\{0\}
                                                          \end{array}\right\}}.
\end{align}
If we assume that the functions $S(x)\Omega (x)$ and $M_h(x)$, $h=1,\ldots,m$, are polynomials in $x$
then there exists an operator $D_{q,S}\in \A$ such that
$$
D_{q,S}(q_n)=\lambda_nq_n,\quad n\ge 0.
$$
Moreover, an explicit expression of this operator can be displayed. Indeed, write  $P_S$ for the polynomial defined by
\begin{equation}\label{Pgs}
P_S(x)-P_S(x-1)=S(x)\Omega (x).
\end{equation}
Then the operator $D_{q,S}$ is defined by
\begin{equation}\label{Dq}
D_{q,S}=P_S(D_p)+\sum_{h=1}^mM_h(D_p)\D_hR_h(D_p),
\end{equation}
where $D_p\in \A$ is the operator for which the polynomials $(p_n)_n$ are eigenfunctions. Moreover
$\lambda_n=P_S(n)$.

\end{theorem}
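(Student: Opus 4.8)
The plan is to verify $D_{q,S}(q_n)=P_S(n)q_n$ directly, by expanding both sides in the basis $(p_j)_j$ and matching coefficients. Expanding the determinant \eqref{qus} along its first row gives $q_n=\sum_{k=0}^m A_{n,k}\,p_{n-k}$, where $A_{n,k}$ is the $m\times m$ minor of \eqref{qus} obtained by deleting the first row and the $(k{+}1)$-st column, namely $A_{n,k}=\det(\xi_{n-c,m-c}^lR_l(n-c))_{1\le l\le m,\ c\in\{0,\dots,m\}\setminus\{k\}}$; in particular $A_{n,0}=\Omega(n)\ne0$, so $\deg q_n=n$. Since $D_p(p_j)=jp_j$, the operators $P_S(D_p),R_h(D_p),M_h(D_p)$ act diagonally and $\D_h(p_{n-k})=\sum_{i\ge1}(-1)^{i+1}\xi_{n-k,i}^hp_{n-k-i}$. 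Substituting into \eqref{Dq}, the coefficient of $p_n$ in $D_{q,S}(q_n)-P_S(n)q_n$ vanishes at once; collecting the coefficient of $p_{n-l}$ for $l\ge1$ and writing $y=n-l$ reduces the theorem to the scalar identities
\[
\sum_{h=1}^m M_h(y)\,B_{h,l,n}=A_{n,l}\bigl(P_S(n)-P_S(n-l)\bigr),\qquad B_{h,l,n}=\sum_{k=0}^{\min(l-1,m)}(-1)^{l-k+1}\xi_{n-k,l-k}^hR_h(n-k)A_{n,k},
\]
for every $l\ge1$, with the convention $A_{n,l}=0$ for $l>m$.

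By \eqref{Pgs} one has $P_S(n)-P_S(n-l)=\sum_{i=1}^{l}S(y+i)\Omega(y+i)$, while \eqref{emeiexp} gives $\sum_{h}M_h(y)B_{h,l,n}=\sum_{i=1}^{m}S(y+i)\det N_i$, where $\det N_i:=\sum_{h}(-1)^{h+i}\xi_{y,m-i}^h\Delta_{h,i}(y)B_{h,l,n}$ and $\Delta_{h,i}(y)$ is the $(m{-}1)\times(m{-}1)$ determinant appearing in \eqref{emeiexp}. Hence it is enough to prove the $S$-free identities $\det N_i=A_{n,l}\Omega(y+i)$ for $1\le i\le\min(l,m)$ and $\det N_i=0$ for $\min(l,m)<i\le m$, after which multiplying by $S(y+i)$ and summing recovers the displayed identity. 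Now $\det N_i$ is exactly the Laplace expansion, along the column $r=0$, of the $m\times m$ determinant $N_i$ whose row-$h$ entry in column $r\in\{-i+1,\dots,m-i\}$ is $\xi_{y-r,m-i-r}^hR_h(y-r)$ for $r\ne0$ and $\xi_{y,m-i}^hB_{h,l,n}$ for $r=0$; this is the bookkeeping encoded by \eqref{emeiexp}. Expanding the $r=0$ column by multilinearity through the formula for $B_{h,l,n}$, and using $\xi_{y,m-i}^h\xi_{n-k,l-k}^h=\xi_{n-k,m-i+l-k}^h$, yields $\det N_i=\sum_{k=0}^{\min(l-1,m)}(-1)^{l-k+1}A_{n,k}\det N_i^{(k)}$, where $N_i^{(k)}$ has the same entries except that its $r=0$ column is $(\xi_{n-k,m-i+l-k}^hR_h(n-k))_h$.

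The crucial point is that in $N_i^{(k)}$ every column, whatever its label, carries a $\xi$ whose second index equals $m-i-n+l+a$ at argument $a$; hence (using the convention \eqref{defxi} for negative indices) the row-$h$ entry at argument $a$ factors as $\xi_{n-m,l-i}^h\,\phi^h(a)$, where $\phi^h(a)=\xi_{a,m-n+a}^hR_h(a)$ and the prefactor depends on $h$ only. Pulling $\xi_{n-m,l-i}^h$ out of each row gives $\det N_i^{(k)}=\bigl(\prod_h\xi_{n-m,l-i}^h\bigr)\det(\phi^h(a))_{a\in\mathcal B_0\cup\{n-k\}}$, with $\mathcal B_0=\{y-r:r\in\{-i+1,\dots,m-i\}\setminus\{0\}\}$; moreover $A_{n,k}=\det(\phi^h(a))_{a\in\mathcal A\setminus\{n-k\}}$ with $\mathcal A=\{n,n-1,\dots,n-m\}$, and $\det N_i^{(l)}=\Omega(y+i)$. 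Thus all the minors in play are minors of the single $m$-rowed matrix $(\phi^h(a))_a$, and the Grassmann--Pl\"ucker (equivalently Laplace/Sylvester) relation for the $(m{+}1)$-set $\mathcal A$ and the $(m{-}1)$-set $\mathcal B_0$ gives $\sum_{k=0}^m(-1)^kA_{n,k}\det N_i^{(k)}=0$. I then finish by a repeated-column count, noting that $\det N_i^{(k)}=0$ whenever $n-k\in\mathcal B_0$: for $l\le m$ and $i\le l$ the terms $k\in\{l+1,\dots,m\}$ of this relation vanish, and the surviving signs reduce it to $\det N_i=A_{n,l}\det N_i^{(l)}=A_{n,l}\Omega(y+i)$; for $l<i\le m$ every $k$ in the summation range $0\le k\le l-1$ already has $n-k\in\mathcal B_0$, so $\det N_i=0$; and for $l>m$ the whole sum $\sum_{k=0}^m$ is present, giving $\det N_i=0=A_{n,l}\Omega(y+i)$. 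Finally $D_{q,S}\in\A$ is immediate from \eqref{Dq}, since $P_S$ and the $M_h$ are polynomials, $\D_h\in\A$, and $\A$ is an algebra containing all polynomials in $D_p$.

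The main obstacle I anticipate lies entirely in the third paragraph: spotting the normalization that makes the $\xi$-exponents across all columns of $N_i^{(k)}$ collapse to the common value $m-i-n+l+a$, so that the $M_h$-weighted sum genuinely becomes a family of minors of one matrix $(\phi^h(a))$ to which a Grassmann--Pl\"ucker relation applies; and then carrying the sign conventions and the case distinctions ($i\le l$ versus $i>l$, and $l\le m$ versus $l>m$) through the repeated-column argument so that precisely the single term $A_{n,l}\Omega(y+i)$ survives. By comparison, the reduction in the first two paragraphs and the verification that the polynomiality of $S\Omega$ and of the $M_h$ keeps $D_{q,S}$ in $\A$ are routine.
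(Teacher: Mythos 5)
Your proposal is correct, and while your first two paragraphs follow the same skeleton as the paper's proof (expand $q_n$ in the basis $(p_j)_j$, apply \eqref{Dq} term by term, and match the coefficient of each $p_{n-l}$ --- the paper does this through the normalized coefficients $\beta_{n,j}=(-1)^{j+1}\Omega_{1,j}(n+1)/\Omega(n)$ rather than your raw minors $A_{n,k}$, which is only a rescaling), the way you establish the resulting determinantal identities is genuinely different. The paper introduces a two-parameter family of auxiliary determinants $\Omega_{i,j}(n)$ (column replacements of $\Omega(n-i)$) and proves four Claims about them by cofactor expansions and by multiplying the linear system defining the $\beta_{n,j}$ by cofactors, then chains the Claims together. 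You instead observe that after the normalization $\phi^h(a)=\xi^h_{a,m-n+a}R_h(a)$ --- valid because the cocycle identity $\xi^h_{x,s+t}=\xi^h_{x,s}\,\xi^h_{x-s,t}$ holds for all integer indices under the convention \eqref{defxi} and the nonvanishing of $\varepsilon^h_n$ on $\ZZ$ --- every determinant in play ($A_{n,k}$, $\Omega(y+i)$, and your $N_i^{(k)}$) is a maximal minor of the single $m$-rowed matrix $(\phi^h(a))_a$, so one Grassmann--Pl\"ucker relation plus a repeated-column count finishes the proof. I verified the delicate points: the relation in the form $\sum_{k=0}^m(-1)^kA_{n,k}\det N_i^{(k)}=0$ is legitimate precisely because the inserted column in $N_i^{(k)}$ sits at the fixed position $i$ (the $r=0$ slot) independently of $k$, so the discrepancy with the sorted-column convention is the uniform sign $(-1)^{i-1}$ and factors out; and the surviving-term analysis in your three cases ($i\le l\le m$, giving $\det N_i=A_{n,l}\det N_i^{(l)}=A_{n,l}\Omega(y+i)$; $l<i\le m$, where every admissible $k$ produces a repeated column; and $l>m$, where the full Pl\"ucker sum appears) is exactly right. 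What your route buys is economy and transparency: the paper's Claims I--IV collapse into one classical identity, and it becomes visible why the hypotheses on $(\varepsilon_n^h)_n$ are needed. What the paper's route buys is self-containedness (no Pl\"ucker relation and its sign conventions) and explicit formulas such as $\lambda_{n,i}=\sum_h S(n-i+h)\Omega_{i-h+1,h}(n+1)$, which exhibit the coefficients of $D_{q,S}$ directly. One small point you gloss over: for $n<m$ the entries $p_{n-k}$, $k>n$, in \eqref{qus} must be read as $0$ (the paper encodes this by setting $\beta_{n,j}=0$ for $n<j$); under that convention your coefficient matching goes through unchanged, since only $k\le l-1\le n-1$ and $k=l\le n$ contribute to the coefficient of $p_{n-l}$.
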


Notice that the dependence in $x$ of the polynomials (\ref{qus}) appears only in the first row, and hence $q_n$ is a linear combination of $m$ consecutive $p_n$'s.

\begin{remark}
In Sections \ref{sch}, \ref{sme} and \ref{skr}, we will apply Theorem \ref{Teor1} to the Charlier, Meixner and Krawtchouk polynomials. We will see there that
the polynomial $P_S$ (see \eqref{Pgs}) will give the order of the difference operator $D_{q,S}$ \eqref{Dq} with respect to which the new polynomials $(q_n)_n$ are eigenfunctions. This will be a consequence of the following Lemmas (which will be also proved in Section \ref{sproofs}).
\end{remark}

\begin{lemma}\label{lgp1} For two nonnegative integers $m_1$ and $m_2$ (at least one different to zero) write $m=m_1+m_2$ and let $R_1, R_2, \ldots, R_m,$ be non null polynomials satisfying that $\deg R_i\not =\deg R_{j}$, for $i\not =j$ and $1\le i,j \le m_1$ or  $m_1+1\le i,j \le m$. Let $l_1, \ldots , l_m,$ be numbers satisfying that $l_i\not =l_j$, $i\not =j$. For a complex number $w$, define new polynomials $S_{i,j}(x)$, $i,j=1,\ldots , m$, by
$$
S_{i,j}(x)=\begin{cases} R_i(x-l_j),& \mbox{if $1\le i \le m_1$,}\\
w ^{m-j}R_i(x-l_j),& \mbox{if $m_1+1\le i \le m$.}
\end{cases}
$$
Except for a finite set of complex numbers $w$ (which depends on $l_i$, $i=1,\ldots, m$), we have
$$
\deg \left(\det (S_{i,j})_{i,j=1}^m\right) =\left(\sum_{i=1}^m\deg R_i\right) -\binom{m_1}{2}-\binom{m_2}{2}.
$$
Moreover, if $l_i=i$, this finite set of complex numbers $w$ is  $\{0, 1\}$.
\end{lemma}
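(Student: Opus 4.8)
The plan is to reduce the determinant to products of Vandermonde-type determinants by a Cauchy--Binet factorization applied to each block separately, and then to control the dependence on $w$ through a Laplace expansion along the column split.

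First I would treat a single block in isolation. Suppose $R_i$, with $i$ ranging over an index set of size $n$ with pairwise distinct degrees $d_i=\deg R_i$ and leading coefficients $c_i\neq0$, and $l_j$ ranging over a set of size $n$ of pairwise distinct shifts. The Taylor expansion $R_i(x-l_j)=\sum_{k\ge0}\frac{(-l_j)^k}{k!}R_i^{(k)}(x)$ exhibits $\bigl(R_i(x-l_j)\bigr)$ as a matrix product $A(x)B$ with $A(x)_{i,k}=R_i^{(k)}(x)$ and $B_{k,j}=\frac{(-l_j)^k}{k!}$. By Cauchy--Binet,
\[
\det\bigl(R_i(x-l_j)\bigr)=\sum_{0\le k_1<\cdots<k_n}\det\bigl(R_i^{(k_a)}(x)\bigr)_{i,a}\,\det\Bigl(\tfrac{(-l_j)^{k_a}}{k_a!}\Bigr)_{a,j}.
\]
Each factor $\det\bigl(R_i^{(k_a)}\bigr)$ has $x$-degree at most $\sum_i d_i-\sum_a k_a$, which is largest exactly for $\{k_a\}=\{0,1,\dots,n-1\}$; hence the $x$-degree is at most $\sum_i d_i-\binom{n}{2}$, and the coefficient of $x^{\sum d_i-\binom n2}$ comes from that single term. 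There every permutation contributes to the same degree, so the leading coefficient equals $\det\bigl(c_i\,d_i^{\underline{\,a-1\,}}\bigr)_{i,a}$; since the falling factorials $d^{\underline{\,a-1\,}}$ are a unitriangular change of basis from the monomials, this is $\bigl(\prod_i c_i\bigr)\prod_{i<i'}(d_{i'}-d_i)$ times the Vandermonde $\prod_{j<j'}(l_j-l_{j'})$ (and the nonzero constant $\prod_a 1/(a-1)!$). Distinctness of the $d_i$ and of the $l_j$ makes this nonzero, so a single block has $x$-degree exactly $\sum d_i-\binom n2$.

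Next I would combine the two blocks by a Laplace expansion of $\det(S_{i,j})$ along the split into the first $m_1$ rows and the last $m_2$ rows, writing it as a signed sum over $m_1$-subsets $J$ of columns of $\det\bigl(R_i(x-l_j)\bigr)_{i\le m_1,\,j\in J}$ times $\det\bigl(w^{m-j}R_i(x-l_j)\bigr)_{i>m_1,\,j\in J^c}$, pulling out $\prod_{j\in J^c}w^{m-j}$ from the second factor. By the single-block result each summand has $x$-degree at most $\bigl(\sum_{i\le m_1}d_i-\binom{m_1}{2}\bigr)+\bigl(\sum_{i>m_1}d_i-\binom{m_2}{2}\bigr)$, which is the asserted value; this gives $\deg\det(S_{i,j})\le\sum_i d_i-\binom{m_1}{2}-\binom{m_2}{2}$, and the coefficient $p(w)$ of $x^{\sum d_i-\binom{m_1}2-\binom{m_2}2}$ is a polynomial in $w$, namely a signed sum over $J$ of $\prod_{j\in J^c}w^{m-j}$ times a nonzero product of block Vandermondes. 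The extreme powers of $w$, namely $w^{\binom{m_2}{2}}$ (from $J^c=\{m_1+1,\dots,m\}$) and $w^{mm_2-\binom{m_2+1}{2}}$ (from $J^c=\{1,\dots,m_2\}$), each arise from a unique $J$ and have nonzero coefficient; hence $p(w)\not\equiv0$, and the $x$-degree equals the asserted value for every $w$ outside the finite zero set of $p$, which depends on the $l_i$. This proves the main assertion.

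For the final claim I would run the Laplace expansion backwards to identify $p(w)$, up to a nonzero constant, with $\det\Phi(w)$ whose first $m_1$ rows are $\bigl(\tfrac{(-l_j)^{a-1}}{(a-1)!}\bigr)_j$ and last $m_2$ rows are $\bigl(w^{m-j}\tfrac{(-l_j)^{b-1}}{(b-1)!}\bigr)_j$; after extracting row constants this is a Vandermonde with rows $(l_j^{a-1})_j$ and $(w^{m-j}l_j^{b-1})_j$. Specializing $l_j=j$, at $w=0$ the block-$2$ rows are supported on the single last column and force vanishing of order $\binom{m_2}{2}$, while at $w=1$ the blocks coalesce to a Vandermonde with $\min(m_1,m_2)$ repeated rows, so $\det\Phi(1)=0$. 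Combining the valuation $\binom{m_2}{2}$ at $0$ with the $w$-degree $mm_2-\binom{m_2+1}{2}$ shows $\det\Phi(w)=w^{\binom{m_2}{2}}g(w)$ with $\deg g=m_1m_2$ and $g(0)\neq0$, so it remains to show $w=1$ is a root of full multiplicity $m_1m_2$, giving $g(w)=c(w-1)^{m_1m_2}$ and zero set $\{0,1\}$. This multiplicity count is the main obstacle: I would obtain it by an iterated row reduction at $w=1$, repeatedly subtracting each block-$1$ row from the block-$2$ row of equal degree and extracting the resulting factor of $(w-1)$; each pass raises the degrees of the surviving block-$2$ rows by one, and summing the overlaps with the block-$1$ degrees over all passes yields exactly $m_1m_2$ factors of $(w-1)$.
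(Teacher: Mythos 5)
Your first two steps are correct and take a genuinely different route from the paper. For a single block you use the Taylor expansion $R_i(x-l_j)=\sum_k \tfrac{(-l_j)^k}{k!}R_i^{(k)}(x)$ plus Cauchy--Binet to get the exact degree $\sum_i d_i-\binom{n}{2}$ with an explicit nonzero leading coefficient (a degree-Vandermonde times a shift-Vandermonde); the paper instead writes the block determinant as a Schur polynomial times a Vandermonde and invokes the Jacobi--Trudi identity. Your Laplace expansion along the row split, with the observation that the two extreme powers $w^{\binom{m_2}{2}}$ and $w^{mm_2-\binom{m_2+1}{2}}$ of the leading-coefficient polynomial $p(w)$ each come from a unique column subset, correctly proves the main assertion; the paper's third step does essentially the same bookkeeping by expanding in powers of $w$. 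The identification of $p(w)$, up to a nonzero constant, with $\det\Phi(w)$ (rows $(l_j^{a-1})_j$ and $(w^{m-j}l_j^{b-1})_j$), and the computation of its valuation at $0$ and its $w$-degree, are also correct.

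The gap is the multiplicity count at $w=1$, which is exactly where all the content of the ``moreover'' claim sits. Your iterated row reduction --- subtract each block-$1$ row from the block-$2$ row of equal degree, extract $(w-1)$, let degrees rise by one per pass, and sum the overlaps --- does not produce $m_1m_2$ factors. Take $m_1=1$, $m_2=2$, $l_j=j$: then $\det\Phi(w)$ is a nonzero constant times $w(w-1)^2$, so the multiplicity is $m_1m_2=2$; but your procedure extracts only one factor. After the first pass the reduced row is $(w+1,1,0)$, which at $w=1$ equals $(3-j)_{j=1,2,3}$ (degree $1$ in $j$), the untouched block-$2$ row at $w=1$ is $(j)_j$ (degree $1$), and block $1$ has no row of degree $1$, so the equal-degree matching stalls. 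The second factor comes from the three-row relation $(3-j)+j=3\cdot 1$, i.e.\ from a linear combination of \emph{both} block-$2$ rows together with the block-$1$ row, which pairwise equal-degree subtraction never forms; in general the overlap count falls short of $m_1m_2$ whenever $m_2\ge 2$. A correct argument must be global: for instance, show that for $k<m_1m_2$ every Leibniz term of the $k$-th $w$-derivative of $\det\Phi$ at $w=1$ is a determinant of evaluations at $j=1,\ldots,m$ of the polynomials $j^{a-1}$ and $(m-j)(m-j-1)\cdots(m-j-r_b+1)\,j^{b-1}$ with $\sum_b r_b=k$, and that a degree-domination count forces these $m$ polynomials to be linearly dependent; alternatively, invoke the classical product formula for the Casoratian of the exponential-polynomial system $\{j^{a-1}\}\cup\{z^{j}j^{b-1}\}$, which carries the factor $(z-1)^{m_1m_2}$. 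The paper compresses this point into ``a careful computation'' yielding $c\,w^{\binom{m_2}{2}}(w-1)^{m_1m_2}$; your proposed mechanism for it, as stated, fails.
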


\bigskip

\begin{lemma}\label{lgp2} For two nonnegative integers $m_1$ and $m_2$ (at least one different to zero) write $m=m_1+m_2$ and let $R_1, R_2, \ldots, R_m,$ be non null polynomials satisfying that $\deg R_i\not =\deg R_{j}$, for $i\not =j$ and $1\le i,j \le m_1$ or  $m_1+1\le i,j \le m$. For a complex number $w$, write $U_j(x)$, $j=1,\ldots , m+1$, for the matrix with row $(U_j(x))_l$ equal to
$$
(U_j(x))_l=\begin{cases} (R_l(x-r))_{{\scriptsize
r=1-j,2-j,\ldots,m+1-j, r\not =0}},& 1\le l\le m_1  \\
(w^{m+1-j-r}R_{l}(x-r))_{{\scriptsize
r=1-j,2-j,\ldots,m+1-j, r\not =0}},& m_1+1\le l\le m.\end{cases}
$$
Then for $\epsilon=0,1$ we have
\begin{align*}
&\deg \left(\sum _{j=1}^{m+1} (-1)^{j+1}w^{\epsilon (m+1-j)}\det U_j(x)\right) \le \left(\sum_{i=1}^m\deg R_i\right)
\\&\hspace{4cm}-\max \left\{\binom{m_1}{2}+\binom{m_2+1}{2},\binom{m_1+1}{2}+\binom{m_2}{2} \right\}.
\end{align*}
\end{lemma}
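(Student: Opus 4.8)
The plan is to strip the $x$-dependence out of the determinants $\det U_j(x)$ by expanding each entry in powers of its shift, so that the statement collapses to the vanishing of a family of purely numerical sums. First I would write, for each $l$, $R_l(x-r)=\sum_{t\ge0}\phi_{l,t}(x)(-r)^t$ with $\phi_{l,t}(x)=R_l^{(t)}(x)/t!$, a polynomial of degree $\deg R_l-t$. Inserting this into $\det U_j(x)$ and using column multilinearity gives
\begin{equation*}
\sum_{j=1}^{m+1}(-1)^{j+1}w^{\epsilon(m+1-j)}\det U_j(x)=\sum_{t}\Bigl(\prod_{l=1}^m\phi_{l,t_l}(x)\Bigr)G(t),
\end{equation*}
the sum running over $t=(t_1,\dots,t_m)\in\NN^m$, where
\begin{equation*}
G(t)=\sum_{j=1}^{m+1}(-1)^{j+1}w^{\epsilon(m+1-j)}\det N^{(j)}(t),
\end{equation*}
and $N^{(j)}(t)$ is the matrix with entries $(-r)^{t_l}$ in rows $1\le l\le m_1$, entries $w^{m+1-j-r}(-r)^{t_l}$ in rows $m_1+1\le l\le m$, and columns indexed by $r\in\{1-j,\dots,m+1-j\}\setminus\{0\}$. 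Since $\det N^{(j)}(t)$ vanishes as soon as two $t_l$ belonging to the same block coincide, only tuples with distinct exponents inside each block survive, so $\sum_l t_l\ge\binom{m_1}{2}+\binom{m_2}{2}$; and the factor $\prod_l\phi_{l,t_l}(x)$ has degree $\sum_i\deg R_i-\sum_l t_l$. It therefore suffices to prove that $G(t)=0$ whenever
\begin{equation*}
\sum_{l}t_l<\binom{m_1}{2}+\binom{m_2}{2}+\max\{m_1,m_2\}.
\end{equation*}

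The second step is to read $G(t)$ as a sliding-window alternating sum of alternants. Factoring $w^{m+1-j}$ out of each of the $m_2$ lower rows replaces the external weight by $w^{(\epsilon+m_2)(m+1-j)}$ and renders the columns independent of $j$: the column attached to $r$ is then the fixed vector with coordinates $(-r)^{t_l}$ in the upper block and $w^{-r}(-r)^{t_l}$ in the lower block. By the classical Vandermonde/Schur calculus each $\det N^{(j)}(t)$ is, up to a Vandermonde factor in the variables $-r$, governed by Schur polynomials attached to the two blocks of exponents; this is exactly where the Schur symmetric functions announced in the Introduction enter the computation. The windows $W_j=\{1-j,\dots,m+1-j\}$ are the $m+1$ unit translates of a single interval, each containing $r=0$ in position $j$, and it is precisely this node that is deleted; consequently $G(t)$ is, in each block, a high-order finite difference of a symmetric function of bounded degree. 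Such a difference annihilates all tuples of sufficiently small weight, and this is the source of the extra drop $\max\{m_1,m_2\}$ over and above the single-determinant value $\binom{m_1}{2}+\binom{m_2}{2}$ furnished by Lemma~\ref{lgp1}. The appearance of the maximum (rather than the minimum or the sum) reflects that the dominant cancellation is produced by the larger of the two blocks, once paired with the compatible value of $\epsilon$.

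To establish the required vanishing I would induct on $m=m_1+m_2$. The base case $m=1$ is the direct identity $w^{\epsilon}R(x-1)-R(x+1)$ (and its lower-block analogue $w^{\epsilon}R(x-1)-wR(x+1)$), whose leading coefficient cancels for the appropriate value of $\epsilon$. For the inductive step I would expand each $\det N^{(j)}(t)$ along the column nearest the deleted position $r=0$; because two consecutive windows overlap in all but one column, the resulting cofactors regroup into an alternating sum of exactly the same shape for $m-1$, to which the induction hypothesis applies, the removed row and column accounting for the change in the threshold. The main obstacle is this bookkeeping: controlling the interplay of the signs $(-1)^{j+1}$, the powers $w^{\epsilon(m+1-j)}$ and $w^{m+1-j-r}$, and the migrating position of the deleted node, and verifying that the sharp number of vanishing levels is indeed $\max\{m_1,m_2\}$ for the admissible $\epsilon$ rather than some other combination of $m_1$ and $m_2$. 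Once $G(t)=0$ is secured on the stated range, the displayed identity yields the degree bound at once, the hypotheses $\deg R_i\ne\deg R_j$ within each block ensuring that the first non-vanishing level is genuinely attained so that the estimate cannot be improved by this argument.
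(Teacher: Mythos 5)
Your opening reduction is sound, and it is in essence the same move the paper makes: the paper expands the $h$-th derivative of the alternating sum over the rows (so that row $i$ carries the exponent $\mu_i-h_i$), which is exactly your Taylor expansion of $R_l(x-r)$ in powers of $r$ repackaged; both turn the lemma into the vanishing of purely numerical sliding-window sums $G(t)$ for all tuples $t$ of weight below $\binom{m_1}{2}+\binom{m_2}{2}+\max\{m_1,m_2\}$. The genuine gap is that you never prove that vanishing. Your Step 2 appeals to ``classical Vandermonde/Schur calculus'', but that calculus does not apply to the matrices $N^{(j)}(t)$ you actually have: after factoring $w^{m+1-j}$ out of the lower rows, those rows still carry the column-dependent weights $w^{-r}$, so $\det N^{(j)}(t)$ is not an alternant and admits no factorization into a Schur polynomial times a Vandermonde determinant. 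The paper avoids precisely this obstacle by two prior reductions that your argument lacks: the block-swap identity $U^{\mathcal{R},m_1,m_2}_{1,w}=(-1)^{m_1}w^{\binom{m+1}{2}}U^{\tilde{\mathcal{R}},m_2,m_1}_{0,1/w}$, which disposes of $\epsilon=1$ once $\epsilon=0$ is known (and which is what resolves the $\epsilon$-block pairing you only gesture at with ``the compatible value of $\epsilon$''), and a reduction, as in the proof of Lemma \ref{lgp1}, to the pure case $m_2=0$ with monomial $R_i$, where each window determinant genuinely is $s_\lambda(x_1^j,\ldots,x_m^j)\prod_{i<k}(x_i^j-x_k^j)$.

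Even granting that reduction, the two concrete facts that produce the extra drop $\max\{m_1,m_2\}$ are asserted but not established in your sketch. The paper uses (i) the identity $\prod_{i<k}(x_i^j-x_k^j)=\binom{m}{j-1}\prod_{i=1}^{m-1}i!$ --- it is exactly the deletion of the node $r=0$ that collapses the Vandermonde factor to a binomial coefficient in $j$ --- and (ii) the Jacobi--Trudi formula, which writes $s_\lambda(x_1^j,\ldots,x_m^j)$ as $\det(\phi_{\lambda_i-i+k}(j))$ with each $\phi_r(j)$ a polynomial of degree $r$ in $j$, hence bounds the degree in $j$ by $\vert\lambda\vert$; then $\sum_j(-1)^{j+1}\binom{m}{j-1}p(j)=0$ for $\deg p\le m-1$ forces $\vert\lambda\vert\ge m$, which is the extra drop. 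Your sentence ``$G(t)$ is, in each block, a high-order finite difference of a symmetric function of bounded degree'' is precisely the conjunction of (i) and (ii), named rather than proved. The fallback induction in your last paragraph does not close this hole: expanding $\det N^{(j)}(t)$ along a column adjacent to $r=0$ produces cofactors living on windows with \emph{two} punctures, which are not of the shape demanded by the inductive hypothesis, so the claimed regrouping into the $(m-1)$-sum needs an argument you do not supply --- and you yourself identify the sign/weight/window bookkeeping and the verification of the threshold $\max\{m_1,m_2\}$ as the unresolved difficulty. As it stands, the proposal is a correct reduction plus a plan for the hard part, not a proof of it.
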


\bigskip

\section{When are the polynomials $(\Cas_n^{R_1,\ldots ,R_m})_n$ orthogonal?} \label{ssi}
Only for a convenient choice of the polynomials $R_j$, $j=1,\ldots, m$, the polynomials $(q_n)_n$ (\ref{qus}) are also orthogonal with respect to a measure. As we wrote in the Introduction, when the sequence $(p_n)_n$ is any of the classical discrete families of Charlier, Meixner and Krawtchouk, a very nice symmetry between the family $(p_n)_n$ and the polynomials $R_j$'s appears. Indeed, for $m=1$, it has been shown in \cite{du1} that the polynomial $R_1$ can be chosen in the same family as $(p_n)_n$ but with different parameters (see the second table in the Introduction). The purpose of this section is to provide the tools to prove that the same choice also works for an arbitrary positive integer $m$ (this was called second ingredient in the Introduction).

The key is given by the recurrence formula
\begin{equation}\label{relaR0}
\varepsilon_{n+1}a_{n+1}R_j(n+1)-b_nR_j(n)+\frac{c_n}{\varepsilon_n}R_j(n-1)=(\eta j+\kappa )R_j(n),\quad n\in \ZZ,
\end{equation}
where $\eta $ and $\kappa$ are numbers independent of $n$ and $j$, $a_n, b_n, c_n$ are the recurrence sequences for the orthogonal polynomials $(p_n)_n$ and $(\varepsilon _n)_n$ is a
sequence which defines a $\D$-operator for $(p_n)_n$ and the algebra $\A$.

Indeed, given the sequences (in the integers) $(a_n)_{n\in \ZZ }$, $(b_n)_{n\in \ZZ }$, $(c_n)_{n\in \ZZ }$ and
$(\varepsilon_{n}^{h})_{n\in \ZZ }$, $h=1,\ldots ,m$, with $\varepsilon_{n}^{h}\not =0$, $n\in\ZZ$, and numbers $\eta _h, \kappa _h$, $h=1,\ldots ,m$, assume we have for each $j\ge 0$ and $h=1,\ldots ,m$, one more sequence denoted by $(R_{j}^{h}(n))_{n\in \ZZ }$ satisfying
\begin{equation}\label{relaR}
\varepsilon_{n+1}^{h}a_{n+1}R_{j}^{h}(n+1)-b_nR_{j}^{h}(n)+\frac{c_n}{\varepsilon_{n}^{h}}R_{j}^{h}(n-1)=(\eta _h j+\kappa _h)R_{j}^{h}(n),\quad n\in \ZZ.
\end{equation}
Assume also that the polynomials $(p_n)_n$ are orthogonal with respect to a measure and satisfy the three term recurrence relation ($p_{-1}=0$)
\begin{equation}\label{ttrr}
xp_n=a_{n+1}p_{n+1}(x)+b_np_n(x)+c_np_{n-1}(x), \quad n\ge 0.
\end{equation}
Define the auxiliary numbers $\xi _{n,i}^h$, $i\ge 0$, $n\in \ZZ $ and $h=1,\ldots ,m$, by
\begin{equation*}\label{defxi2}
\xi_{n,i}^h=\prod_{j=n-i+1}^{n}\varepsilon_{j}^{h},\quad i\ge 1,\quad \quad \xi_{n,0}^h=1,\quad\quad \xi_{x,i}^h=\frac{1}{\xi_{x-i,-i}^h},\quad i\leq-1.
\end{equation*}
Notice that for $x=n$, the number $\xi_{n,i}^h$ coincides with the number defined by (\ref{defxi}).


Given a $m$-tuple $G$ of $m$ positive integers, $G=(g_1,\ldots , g_m)$, assume that the $m$ numbers
\begin{equation}\label{diffn}
\tilde g_h=\eta _hg_h+\kappa_h,\quad h=1,\ldots ,m,
\end{equation}
are different. Call $\tilde G=\{ \tilde g_1,\ldots , \tilde g_m\}$. Consider finally the $m\times m$ Casorati determinant $\Omega _G$ defined by
\begin{equation*}\label{casort}
\Omega_G (n)=\det \left(\xi_{n,m-j}^lR_{g_l}^l(n-j)\right)_{l,j=1}^m,
\end{equation*}
and assume that $\Omega_G (n)\not =0$ for $n\ge 0$.

We then define the sequence of polynomials $(q_n^G)_n$ by
\begin{equation}\label{quso}
q_n^G(x)=\begin{vmatrix}
p_n(x) & -p_{n-1}(x) & \cdots & (-1)^mp_{n-m}(x) \\
\displaystyle\xi_{n,m}^1R_{g_1}^1(n) & \xi_{n-1,m-1}^1R_{g_1}^1(n-1) & \cdots &
R_{g_1}^1(n-m) \\
               \vdots & \vdots & \ddots & \vdots \\
              \xi_{n,m}^mR_{g_m}^m(n) & \displaystyle
               \xi_{n-1,m-1}^mR_{g_m}^m(n-1) & \cdots &R_{g_m}^m(n-m)
             \end{vmatrix}.
\end{equation}
Notice that if for each $h=1,\ldots , m$,  $R_{g_h}^h(n)$, $n\in \ZZ$, is a polynomial in $n$, then the polynomials $q_n^G$ (\ref{quso})
fit into the definition of the polynomials (\ref{qus}) in Theorem \ref{Teor1}, and hence they are eigenfunctions of a certain operator in the algebra $\A$.

The key to prove that the polynomials $(q_n^G)_n$ are orthogonal with respect to a measure $\tilde \rho $ are the following formulas: there exists a constant $c_G\not =0$ such that
\begin{align}\label{foeq1}
\langle \tilde \rho,p_n\rangle &=(-1)^n c_G\sum_{i=1}^{m}\frac{\xi_{n,n+1}^i R_{g_i}^i(n)}{\p _{\tilde G}'(\tilde g_i)R_{g_i}^{i}(-1)},\quad n\ge 0,\\
\label{foeq2}
0&=\sum_{i=1}^{m}\frac{R_{g_i}^i(n)}{\p _{\tilde G}'(\tilde g_i)\xi_{-1,-n-1}^iR_{g_i}^{i}(-1)},\quad 1-m\le n<0,\\
\label{foeq3}
0&\not =\sum_{i=1}^{m}\frac{R_{g_i}^i(-m)}{\p _{\tilde G}'(\tilde g_i)\xi_{-1,m-1}^iR_{g_i}^{i}(-1)},
\end{align}
where $\tilde g_h$ are the $m$ different numbers (\ref{diffn}) and $\p_{\tilde G}(x)=\prod_{j=1}^m(x-\tilde g_i)$.

We show that in the following two Lemmas.

\begin{lemma}\label{lort1} Assume that (\ref{foeq1}), (\ref{foeq2}) and (\ref{foeq3}) hold.
For an integer $n$, $n\ge 1-m$ and a polynomial $s$ with $\deg(s)\le n+m-1$, we have
\begin{align}\label{lort1.1}
\langle \tilde \rho ,s(x)p_n(x)\rangle &=(-1)^n c_G\sum_{i=1}^{m}s(-\tilde g_i)\frac{\xi_{n,n+1}^i R_{g_i}^i(n)}{\p _{\tilde G}'(\tilde g_i)R_{g_i}^{i}(-1)},\quad \mbox{if $n\ge 0$,}\\\label{lort1.2}
0&=\sum_{i=1}^{m}s(-\tilde g_i)\frac{R_{g_i}^i(n)}{\p _{\tilde G}'(\tilde g_i)\xi_{-1,-n-1}^iR_{g_i}^{i}(-1)}, \quad \mbox{if $1-m\le n<0$.}
\end{align}
On the other hand, if $\deg(s)=n+m$,
\begin{align}\label{lort1.3}
\langle \tilde \rho ,s(x)p_n(x)\rangle & \not =(-1)^n c_G\sum_{i=1}^{m}s(-\tilde g_i)\frac{\xi_{n,n+1}^i R_{g_i}^i(n)}{\p _{\tilde G}'(\tilde g_i)R_{g_i}^{i}(-1)},\quad \mbox{if $n\ge 0$,}\\ \label{lort1.4}
0& \not =\sum_{i=1}^{m}s(-\tilde g_i)\frac{R_{g_i}^i(n)}{\p _{\tilde G}'(\tilde g_i)\xi_{-1,-n-1}^iR_{g_i}^{i}(-1)},\quad \mbox{if $n< 0$.}
\end{align}
\end{lemma}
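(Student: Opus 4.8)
The plan is to prove all four assertions by induction on $\deg s$, exploiting linearity in $s$ together with the duality between the three term recurrence (\ref{ttrr}) for $(p_n)_n$ and the recurrence (\ref{relaR}) for the sequences $R^i_{g_i}$. First I would record that the right-hand sides of (\ref{lort1.1})--(\ref{lort1.4}) depend on $s$ only through the $m$ values $s(-\tilde g_1),\dots,s(-\tilde g_m)$, so it suffices to control the linear functionals $s\mapsto \sum_{i=1}^m s(-\tilde g_i)\,\omega^i_n$, where $\omega^i_n=\xi^i_{n,n+1}R^i_{g_i}(n)/(\p'_{\tilde G}(\tilde g_i)R^i_{g_i}(-1))$; using the elementary identity $\xi^i_{n,n+1}=1/\xi^i_{-1,-n-1}$ one checks that these $\omega^i_n$ are exactly the coefficients written in the statement, so the two forms appearing for $n\ge0$ and $n<0$ agree. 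The instances $s\equiv1$ of (\ref{lort1.1}), (\ref{lort1.2}) and (\ref{lort1.4}) are precisely the hypotheses (\ref{foeq1}), (\ref{foeq2}) and (\ref{foeq3}), which will serve as the base case $\deg s=0$.

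The engine of the induction is the transposition of (\ref{relaR}). Writing $s(x)=xu(x)+s(0)$ and using $xp_n=a_{n+1}p_{n+1}+b_np_n+c_np_{n-1}$ expresses $\langle\tilde\rho,s p_n\rangle$ through the values at $n\pm1$. On the other side, taking $j=g_i$ in (\ref{relaR}) (so that the eigenvalue is $\tilde g_i=\eta_ig_i+\kappa_i$), multiplying by $\xi^i_{n,n+1}$ and using $\xi^i_{n+1,n+2}=\varepsilon^i_{n+1}\xi^i_{n,n+1}$ and $\xi^i_{n-1,n}=\xi^i_{n,n+1}/\varepsilon^i_n$, one gets the transposed recurrence $a_{n+1}\omega^i_{n+1}-b_n\omega^i_n+c_n\omega^i_{n-1}=\tilde g_i\,\omega^i_n$, valid for all $n\in\ZZ$ since $\varepsilon^i_n\neq0$. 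As $(xu)(-\tilde g_i)=-\tilde g_i\,u(-\tilde g_i)$, this matches the $p_n$-side recurrence, so the error $E_n(s):=\langle\tilde\rho,s p_n\rangle-(-1)^nc_G\sum_i s(-\tilde g_i)\omega^i_n$ (with the convention $\langle\tilde\rho,s p_n\rangle:=0$ for $n<0$) obeys the clean recurrence $E_n(xu)=a_{n+1}E_{n+1}(u)+b_nE_n(u)+c_nE_{n-1}(u)$ for every $n\neq-1$, the factor $(-1)^n$ absorbing the sign changes.

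Then I would run the induction on $D=\deg s$. The slice $n=-1$ is special and is treated directly rather than through the recurrence: since $\xi^i_{-1,0}=1$ one has $\omega^i_{-1}=1/\p'_{\tilde G}(\tilde g_i)$, whence $\sum_i s(-\tilde g_i)\omega^i_{-1}=\sum_i s(-\tilde g_i)/\p'_{\tilde G}(\tilde g_i)$, and the opening Lemma (\ref{sumuif}) applied to $\tilde G$ (with test polynomial $x\mapsto s(-x)$) gives vanishing for $\deg s\le m-2$ and a nonzero leading-coefficient value for $\deg s=m-1$; this settles (\ref{lort1.2}) and (\ref{lort1.4}) at $n=-1$ simultaneously. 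For $n\neq-1$ I reduce $E_n(s)=E_n(xu)$ (the constant $s(0)$ is killed by the base case) to the three terms $E_{n\pm1}(u),E_n(u)$ of degree $D-1$; a bookkeeping check shows that whenever $\deg s=D\le n+m-1$ all three indices remain in the range $n'\ge1-m$ with $\deg u\le n'+m-1$, and that the only index-$(-1)$ values that occur have degree $\le m-2$, already disposed of. This closes the induction and yields (\ref{lort1.1}) and (\ref{lort1.2}).

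For the strict statements (\ref{lort1.3}) and (\ref{lort1.4}) I would use that $E_n$ is a linear functional annihilating all polynomials of degree $\le n+m-1$, so $E_n(s)$ equals the leading coefficient of $s$ times the single constant $E_n(x^{n+m})$; the point is to show this constant does not vanish. Pushing the top-degree monomial through the recurrence lowers the index one step at a time, multiplying by the coefficients $c_n,c_{n-1},\dots$ and connecting to the evaluation side, and terminates either at the $n=-1$ computation above or at the base non-vanishing (\ref{foeq3}). I expect the genuine difficulty to be concentrated precisely here and in the boundary bookkeeping at $n=-1$: this is the index where the ``moment regime'' $n\ge0$ meets the ``vanishing regime'' $n<0$, where the clean recurrence for $E_n$ acquires a defect coming from $p_{-1}=0$, and where one must verify that the chain of recurrence coefficients linking the top degree to (\ref{foeq3}) never vanishes, so that the leading contribution actually survives.
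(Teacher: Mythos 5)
Your proposal is correct and follows essentially the same route as the paper's proof: an induction on $\deg s$ driven by the three term recurrence (\ref{ttrr}) together with the transposed form of (\ref{relaR}), with the slice $n=-1$ handled directly through (\ref{sumuif}); your error-functional $E_n$ and the convention $\langle \tilde \rho, s\,p_n\rangle =0$ for $n<0$ merely repackage the paper's case analysis ($n>0$, $n=0$, $n=-1$, $1-m\le n<-1$), where your term $c_0E_{-1}(u)$ is exactly the paper's add-and-subtract of the $c_0R_{g_i}^i(-1)$ term killed by (\ref{sumuif}). For the strict statements (\ref{lort1.3})--(\ref{lort1.4}), which the paper dismisses as provable ``in a similar way,'' your descent $E_n(x^{n+m})=c_nE_{n-1}(x^{n+m-1})$ terminating at the $n=-1$ evaluation or at (\ref{foeq3}) is precisely that induction, and your closing remark that the chain requires the coefficients $c_j$ to be nonzero (including $c_0$ and negative indices, which orthogonality of $(p_n)_n$ alone does not control) flags a hypothesis the paper leaves implicit rather than a defect of your argument.
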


\begin{proof}
It is enough to prove the Lemma for $s(x)=x^j$.

We prove (\ref{lort1.1}) and (\ref{lort1.2}) by induction on $j$.

For $j=0$, (\ref{lort1.1}) and (\ref{lort1.2}) are just (\ref{foeq1}) and (\ref{foeq2}).

Assume (\ref{lort1.1}) and (\ref{lort1.2}) hold for $s(x)=x^j$, $j\le n+m-1$, and consider $s(x)=x^{j+1}$, with $j+1\le n+m-1$. In particular
$j\le n+m, n+m-1,n+m-2$.
For $n>0$, using the induction hypothesis, (\ref{ttrr}) and (\ref{relaR}), we get
\begin{align*}
\langle \tilde\rho ,x^{j+1}p_n(x)\rangle&=\langle \tilde \rho,a_{n+1}x^{j}p_{n+1}(x)+b_nx^{j}p_n(x)+c_nx^{j}p_{n-1}(x)\rangle \\
&=(-1)^{n}c_G\sum_{i=1}^{m}\frac{(-\tilde g_i)^j}{\p _{\tilde G}'(\tilde g_i)R_{g_i}^i(-1)}\\
&\quad \quad \times (-\xi_{n+1,n+2}^ia_{n+1}R_{g_i}^i(n+1)+\xi_{n,n+1}^ib_nR_{g_i}^i(n)-\xi_{n-1,n}^ic_nR_{g_i}^i(n-1))\\
&=(-1)^{n}c_G\sum_{i=1}^{n_{\tilde G}}\frac{(-\tilde g_i)^j}{\p _{\tilde G}'(\tilde g_i)R_{g_i}^i(-1)}\xi_{n,n+1}^i(-\tilde g_i)R_{g_i}^i(n)\\
&=(-1)^{n}c_G\sum_{i=1}^{n_{\tilde G}}(-\tilde g_i)^{j+1}\frac{\xi_{n,n+1}^i}{\p _{\tilde G}'(\tilde g_i)R_{g_i}^i(-1)}R_{g_i}^i(n).
\end{align*}
For $n=0$ (hence $j\le m-2$), using the induction hypothesis, (\ref{ttrr}), (\ref{relaR}) and (\ref{sumuif}) we have
\begin{align*}
\langle \tilde \rho ,x^{j+1}p_0(x)\rangle&=\langle \tilde \rho ,a_{1}x^{j}p_{1}(x)+b_0x^{j}p_0(x)\rangle \\
&=c_G\sum_{i=1}^{m}\frac{(-\tilde g_i)^j}{\p _{\tilde G}'(\tilde g_i)R_{g_i}^i(-1)}(-\xi_{1,2}^ia_{1}R_{g_i}^i(1)+\xi_{0,1}^ib_0R_{g_i}^i(0))\\
&=c_G\sum_{i=1}^{m}\frac{(-\tilde g_i)^j}{\p _{\tilde G}'(\tilde g_i)R_{g_i}^i(-1)}(-\xi_{1,2}^ia_{1}R_{g_i}^i(1)+\xi_{0,1}^ib_0R_{g_i}^i(0)-\xi_{-1,0}^ic_0R_{g_i}^i(-1)\\
&\hspace{5cm}+\xi_{-1,0}^ic_0R_{g_i}^i(-1))\\
&=c_G\sum_{i=1}^{m}(-\tilde g_i)^{j+1}\frac{\xi_{0,1}^i}{\p _{\tilde G}'(\tilde g_i)R_{g_i}^i(-1)}R_{g_i}^i(0)+
c_Gc_0\sum_{i=1}^{m}\frac{(-\tilde g_i)^j}{\p _{\tilde G}'(\tilde g_i)}\\
&=c_G\sum_{i=1}^{m}(-\tilde g_i)^{j+1}\frac{\xi_{0,1}^i}{\p _{\tilde G}'(\tilde g_i)R_{g_i}^i(-1)}R_{g_i}^i(0).
\end{align*}
For $n=-1$ (hence $j+1\le m-2$), using  (\ref{sumuif}) we have
\begin{align*}
\sum_{i=1}^{m}&(-\tilde g_i)^{j+1}\frac{1}{\xi_{-1,0}^i\p _{\tilde G}'(\tilde g_i)R_{g_i}^i(-1)}R_{g_i}^i(-1) \\
&=\sum_{i=1}^{m}\frac{(-\tilde g_i)^{j+1}}{\p _{\tilde G}'(\tilde g_i)}=0.
\end{align*}
Finally, for $m-1\le n<-1$ using the induction hypothesis and (\ref{relaR}), we get
\begin{align*}
\sum_{i=1}^{m}&\frac{(-\tilde g_i)^{j+1}R_{g_i}^i(n)}{\xi_{-1,-n-1}^i\p _{\tilde G}'(\tilde g_i)R_{g_i}^i(-1)} \\
&=-\sum_{i=1}^{m}\frac{(-\tilde g_i)^j(\varepsilon_{n+1}^ia_{n+1}R_{g_i}^i(n+1)-b_nR_{g_i}^i(n)+\frac{c_n}{\varepsilon_n^i}R_{g_i}^i(n-1))}{\xi_{-1,-n-1}^i\p _{\tilde G}'(\tilde g_i)R_{g_i}^i(-1)}\\
&=-a_{n+1}\sum_{i=1}^{m}\frac{(-\tilde g_i)^jR_{g_i}^i(n+1)}{\xi_{-1,-n-2}^i\p _{\tilde G}'(\tilde g_i)R_{g_i}^i(-1)}+b_n\sum_{i=1}^{m}\frac{(-\tilde g_i)^jR_{g_i}^i(n)}{\xi_{-1,-n-1}^i\p _{\tilde G}'(\tilde g_i)R_{g_i}^i(-1)} \\ &\hspace{2 cm}-c_{n}\sum_{i=1}^{m}\frac{(-\tilde g_i)^jR_{g_i}^i(n-1)}{\xi_{-1,-n}^i\p _{\tilde G}'(\tilde g_i)R_{g_i}^i(-1)}
\\
&=0.
\end{align*}

(\ref{lort1.3}) and (\ref{lort1.4}) can be proved in a similar way by induction on $j$.

\end{proof}

\begin{lemma} \label{lort2}
Assume that (\ref{foeq1}), (\ref{foeq2}) and (\ref{foeq3}) hold. Then the polynomials
$q_n^G$, $n\ge 0$, are orthogonal with respect to $\tilde \rho$.
\end{lemma}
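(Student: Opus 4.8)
The plan is to show that $(q_n^G)_n$ is an orthogonal system by verifying the moment conditions. Because $\Omega_G(n)\neq0$ forces $\deg q_n^G=n$ (as noted right after \eqref{quso}), it suffices to prove $\langle\tilde\rho,x^kq_n^G\rangle=0$ for $0\le k<n$ together with $\langle\tilde\rho,x^nq_n^G\rangle\neq0$; orthogonality then follows at once by expanding any $q_{n'}^G$ with $n'<n$ in the monomial basis $1,x,\dots,x^{n'}$. The starting observation is that only the first row of \eqref{quso} depends on $x$, so by multilinearity of the determinant and linearity of $\tilde\rho$ the scalar $\langle\tilde\rho,x^kq_n^G\rangle$ equals the determinant obtained from \eqref{quso} by replacing its first row with $\bigl((-1)^j\langle\tilde\rho,x^kp_{n-j}\rangle\bigr)_{j=0}^m$, all other rows unchanged. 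Everything then reduces to analysing this modified determinant by means of Lemma \ref{lort1}.

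The key algebraic fact is that, directly from \eqref{defxi}, the quotient $\xi_{n-j,n-j+1}^i/\xi_{n-j,m-j}^i=\prod_{\ell=0}^{n-m}\varepsilon_\ell^i=:c_i^{(n)}$ is independent of $j$ (the common top index $n-j$ cancels). For $0\le k<n$ I would use formula \eqref{lort1.1} on the columns with $n-j\ge0$ and formula \eqref{lort1.2} on those with $n-j<0$ (which occur when $n<m$); since $\xi_{n-j,n-j+1}^i=1/\xi_{-1,-(n-j)-1}^i$, the latter sum is exactly the vanishing one in \eqref{lort1.2}. In both cases the $(j{+}1)$-th entry of the modified first row becomes
\[(-1)^j\langle\tilde\rho,x^kp_{n-j}\rangle=\sum_{i=1}^m\alpha_i\,\xi_{n-j,m-j}^iR_{g_i}^i(n-j),\qquad \alpha_i:=(-1)^nc_G\,(-\tilde g_i)^k\,\frac{c_i^{(n)}}{\p _{\tilde G}'(\tilde g_i)R_{g_i}^i(-1)},\]
for every $j=0,\dots,m$. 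As $\xi_{n-j,m-j}^iR_{g_i}^i(n-j)$ is precisely the $(i{+}1,j{+}1)$ entry of \eqref{quso}, this says the modified first row is the combination $\sum_{i=1}^m\alpha_i(\text{row }i{+}1)$; hence the determinant vanishes and $\langle\tilde\rho,x^kq_n^G\rangle=0$.

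For the case $k=n$ I would run the same row reduction, subtracting $\sum_i\alpha_i(\text{row }i{+}1)$ (now with $k=n$) from the first row. For the columns $j=0,\dots,m-1$ one has $\deg(x^n)=n\le(n-j)+m-1$, so \eqref{lort1.1}/\eqref{lort1.2} still give exact cancellation and those entries become $0$; but the last column $j=m$ sits at the critical degree $\deg(x^n)=(n-m)+m$, where \eqref{lort1.3} (resp.\ \eqref{lort1.4} when $n<m$, whose base instance is \eqref{foeq3}) yields a strict inequality. Thus the reduced first row is $(0,\dots,0,\delta)$ with $\delta\neq0$, and expanding along it gives $\langle\tilde\rho,x^nq_n^G\rangle=(-1)^m\delta\,M_{1,m+1}$, where $M_{1,m+1}$ is the minor of \eqref{quso} obtained by deleting the first row and the last column. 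Factoring $\varepsilon_{n-m+1}^i$ out of the $i$-th row of this minor, via $\xi_{n-j,m-j}^i=\varepsilon_{n-m+1}^i\,\xi_{n-j,m-1-j}^i$, identifies $M_{1,m+1}$ up to the nonzero factor $\prod_{i=1}^m\varepsilon_{n-m+1}^i$ with the coefficient of $p_{n+1}$ in $q_{n+1}^G$, which is nonzero because $\deg q_{n+1}^G=n+1$; hence $\langle\tilde\rho,x^nq_n^G\rangle\neq0$ and the proof is complete. The main obstacle is exactly this last step: one must check that the combination $\sum_i\alpha_i(\text{row }i{+}1)$ reproduces precisely the right-hand side of \eqref{lort1.3}/\eqref{lort1.4} so that the strict inequality survives and forces $\delta\neq0$ with no accidental cancellation, and that the surviving minor is a genuinely nonzero Casorati determinant. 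The careful bookkeeping of the $\xi$-identity across the boundary $n<m$, where \eqref{lort1.2} and \eqref{lort1.4} take over from \eqref{lort1.1} and \eqref{lort1.3}, is where all the delicacy lies.
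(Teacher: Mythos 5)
Your proposal is correct and follows essentially the same route as the paper's own proof: both replace the first row of \eqref{quso} by the moments $\langle\tilde\rho,x^kp_{n-j}\rangle$ via multilinearity, use the identity $\xi_{n-l,n-l+1}^i=\xi_{n-m,n-m+1}^i\xi_{n-l,m-l}^i$ to make the coefficients column-independent, cancel the first row by Lemma \ref{lort1} for $k<n$, and for $k=n$ reduce it to $(0,\dots,0,\delta)$ with $\delta\neq0$, the surviving minor being $\bigl(\prod_h\varepsilon_{n-m+1}^h\bigr)\Omega_G(n+1)\neq0$. Your identification of that minor with the coefficient of $p_{n+1}$ in $q_{n+1}^G$ is just a rephrasing of the paper's $\Omega_G(n+1)\neq0$, so the two arguments coincide.
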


\begin{proof}

Write $s(x)=x^j$ and $\displaystyle\alpha_i^j=\frac{c_G(-\tilde g_i)^j}{\p _{\tilde G}'(\tilde g_i)R_{g_i}^i(-1)}$. According to Lemma \ref{lort1}  we have
\begin{align*}
\langle \tilde \rho ,x^jp_n(x)\rangle &= (-1)^n\sum_{i=1}^{m}\alpha_i^j\xi_{n,n+1}^iR_{g_i}^i(n),\quad \mbox{for $j\le n+m-1$,}\\
\langle \tilde \rho ,x^{n+m}p_n(x)\rangle &\not = (-1)^n\sum_{i=1}^{m}\alpha_i^{n+m}\xi_{n,n+1}^iR_{g_i}^i(n).
\end{align*}

It is easy to check that $\xi_{n-l,n-l+1}^i=\xi_{n-m,n-m+1}^i\xi_{n-l,m-l}^i$, for any $n-l\in \ZZ$. Hence
the previous identities can be rewritten in the form
\begin{align*}
\langle \tilde \rho ,x^jp_{n-l}(x)\rangle &
=(-1)^l\sum_{i=1}^{m}(-1)^{n}\xi_{n-m,n-m+1}^i\alpha_i^j\xi_{n-l,m-l}^iR_{g_i}^i(n-l),\\
\langle \tilde \rho ,x^{n-l+m}p_{n-l}(x)\rangle &
\not =(-1)^l\sum_{i=1}^{m}(-1)^{n}\xi_{n-m,n-m+1}^i\alpha_i^{n-l+m}\xi_{n-l,m-l}^iR_{g_i}^i(n-l),
\end{align*}
where  $j\le n-l+m-1$, $l=0,\ldots, m$ and $n-l\ge 0$. In particular, we have for $j\le n-1$, $l=0,\ldots, m$ and $n-l\ge 0$
\begin{align}\label{idc.1}
\langle \tilde \rho ,x^jp_{n-l}(x)\rangle &
=(-1)^l\sum_{i=1}^{m}(-1)^{n}\xi_{n-m,n-m+1}^i\alpha_i^j\xi_{n-l,m-l}^iR_{g_i}^i(n-l),\\\label{idc.2}
\langle \tilde \rho ,x^{n}p_{n-m}(x)\rangle &
\not =(-1)^m\sum_{i=1}^{m}(-1)^{n}\xi_{n-m,n-m+1}^i\alpha_i^{n}R_{g_i}^i(n-m).
\end{align}
In a similar way, we have
\begin{align}\label{idc.3}
0 &=(-1)^l\sum_{i=1}^{m}(-1)^{n}\xi_{n-m,n-m+1}^i\alpha_i^j\xi_{n-l,m-l}^iR_{g_i}(n-l),\\\label{idc.4}
0 &
\not =(-1)^m\sum_{i=1}^{m}(-1)^{n}\xi_{n-m,n-m+1}^i\alpha_i^nR_{g_i}(n-m),
\end{align}
where  $j\le n-1$, $l=0,\ldots, m$ and $n-l< 0$.

From (\ref{quso}) it follows that
$$
\langle \tilde \rho ,x^jq_n\rangle =\begin{vmatrix}
\langle \tilde \rho ,x^jp_n(x)\rangle  & -\langle \tilde \rho ,x^jp_{n-1}(x)\rangle  & \cdots & (-1)^m\langle \tilde \rho ,x^jp_{n-m}(x)\rangle  \\
\displaystyle\xi_{n,m}^1R_{g_1}^1(n) & \xi_{n-1,m-1}^1R_{g_1}^1(n-1) & \cdots &
R_{g_1}^1(n-m) \\
               \vdots & \vdots & \ddots & \vdots \\
              \xi_{n,m}^mR_{g_m}^m(n) & \displaystyle
               \xi_{n-1,m-1}^mR_{g_m}^m(n-1) & \cdots &R_{g_m}^m(n-m)
             \end{vmatrix}.
$$
We subtract to the first row of this determinant the linear combination of the next rows multiplying the $i$-row by $(-1)^{n}\xi_{n-m,n-m+1}^i\alpha_i^j$.
(\ref{idc.1}), (\ref{idc.2}), (\ref{idc.3}) and (\ref{idc.4}) then give
$$
\langle \tilde \rho ,x^jq_n\rangle =\begin{vmatrix}
0  & 0 & \cdots & 0  \\
\displaystyle\xi_{n,m}^1R_{g_1}^1(n) & \xi_{n-1,m-1}^1R_{g_1}^1(n-1) & \cdots &
R_{g_1}^1(n-m) \\
               \vdots & \vdots & \ddots & \vdots \\
              \xi_{n,m}^mR_{g_m}^m(n) & \displaystyle
               \xi_{n-1,m-1}^mR_{g_m}^m(n-1) & \cdots &R_{g_m}^m(n-m)
             \end{vmatrix}=0,
$$
for $0\le j\le n-1$, and
\begin{align*}
\langle \tilde \rho ,x^nq_n\rangle &=\begin{vmatrix}
0  & 0 & \cdots & d  \\
\displaystyle\xi_{n,m}^1R_{g_1}^1(n) & \xi_{n-1,m-1}^1R_{g_1}^1(n-1) & \cdots &
R_{g_1}^1(n-m) \\
               \vdots & \vdots & \ddots & \vdots \\
              \xi_{n,m}^mR_{g_m}^m(n) & \displaystyle
               \xi_{n-1,m-1}^mR_{g_m}^m(n-1) & \cdots &R_{g_m}^m(n-m)
             \end{vmatrix}\\&=(-1)^{m}d\left(\prod_{h=1}^m\varepsilon_{n-m+1}^h\right)\Omega_G(n+1),
\end{align*}
for certain number $d\not =0$. Hence $\langle \tilde \rho ,x^nq_n\rangle\not =0$.

This proves that the polynomials $(q_n)_n$ are orthogonal with respect to the measure $\tilde \rho$.

\end{proof}

\bigskip
We still need a  last ingredient for identifying the measure $\tilde \rho$ with respect to which the polynomials $(q_n^G)_n$ (\ref{quso}) are orthogonal.
The measures in the conjectures A, B and C  depend on certain finite sets $F$ while the polynomials $(q_n^G)_n$ depend on the finite set $G$ (the degrees of the polynomials $R$'s). The relationship between the sets $F$ and $G$ are going to be given by the following transforms of finite sets of positive integers.

Consider the sets $\Upsilon$ and $\Upsilon _0$ formed by all finite sets of positive or nonnegative integers, respectively:
\begin{align*}
\Upsilon&=\{F:\mbox{$F$ is a finite set of positive integers}\} ,\\
\Upsilon_0&=\{F:\mbox{$F$ is a finite set of nonnegative integers}\} .
\end{align*}
We consider an involution $I$ in $\Upsilon$, and a family $J_h$, $h\ge 1$, of transforms from  $\Upsilon$ into  $\Upsilon _0$. For $F\in \Upsilon$ write $F=\{f_1,\ldots ,f_k\}$ with $f_i<f_{i+1}$, so that $f_k=\max F$. Then $I(F)$ and $J_h(F)$, $h\ge 1$, are defined by
\begin{align}\label{dinv}
I(F)&=\{1,2,\ldots, f_k\}\setminus \{f_k-f,f\in F\},\\ \label{dinv2}
J_h(F)&=\{0,1,2,\ldots, f_k+h-1\}\setminus \{f-1,f\in F\}.
\end{align}
Notice that $I$  is an involution: $I^2=Id$.

For the involution $I$, the bigger the wholes in $F$ (with respect to the set $\{1,2,\ldots , f_k\}$), the bigger the involuted set $I(F)$.
Here it is a couple of examples
$$
I(\{ 1,2,3,\ldots ,k\})=\{ k\},\quad \quad I(\{1, k\})=\{ 1,2,\ldots, k-2, k\}.
$$
Something similar happens for the transform $J_h$ with respect to $\{0,1,\ldots , f_k+h-1\}$.

Notice that
$$
\max F=\max I(F), \quad h-1+\max F=\max J_h(F),
$$
and if we write $n_F$ for the number of elements in $F$, we also have
$$
n_{I(F)}=f_k-n_F+1,\quad n_{J_h(F)}=f_k+h-n_F.
$$

\section{Charlier polynomials}\label{sch}

In this Section we consider the Charlier family and prove Conjecture A.
For $a\neq0$, we write $(c_n^a)_n$ for the sequence of Charlier polynomials (that and the next formulas can be found in \cite{Ch}, pp. 170-1; see also \cite{KLS}, pp, 247-9 or \cite{NSU}, ch. 2) defined by
\begin{equation}\label{Chpol}
    c_n^a(x)=\frac{1}{n!}\sum_{j=0}^n(-a)^{n-j}\binom{n}{j}\binom{x}{j}j!.
\end{equation}
The Charlier polynomials are orthogonal with respect to the measure
\begin{equation}\label{Chw}
    \rho_a=\sum_{x=0}^{\infty}\frac{a^x}{x!}\delta_x,\quad a\neq0,
\end{equation}
which is positive only when $a>0$. The three-term recurrence formula for $(c_n^a)_n$ is ($c_{-1}^a=0$)
\begin{equation}\label{Chttrr}
   xc_n^a=(n+1)c_{n+1}^a+(n+a)c_n^a+ac_{n-1}^a,\quad n\geq0.
\end{equation}
They are eigenfunctions of the following second-order difference operator
\begin{equation}\label{Chdeq}
   D_a=-x\mathfrak{s}_{-1}+(x+a)\mathfrak{s}_0-a\mathfrak{s}_1,\quad D_a(c_n^a)=nc_n^a,\quad n\geq0,
\end{equation}
where $\mathfrak{s}_j(f)=f(x+j)$. They also satisfy
\begin{equation}\label{Chlad}
   \Delta(c_n^a)=c_{n-1}^a,\quad \frac{d}{da}(c_n^a)=-c_{n-1}^a,
\end{equation}
and the duality
\begin{equation}\label{Chdua}
   (-1)^ma^mn!c_n^a(m)=(-1)^na^nm!c_{m}^a(n), \quad n,m\ge 0.
\end{equation}

Consider now the algebra $\A$ of operators defined by \eqref{algdiffd}. Lemma 4.1 of \cite{du1} provides the following $\D$-operator for Charlier polynomials: for $a\neq0$, the operator $\mathcal{D}$ defined by \eqref{defTo} from the sequence $\varepsilon_n=1, n\geq0$, is a $\mathcal{D}$-operator for the  Charlier polynomials $(c_n^a)_n$ \eqref{Chpol} and the algebra $\A$. More precisely
$$
\mathcal{D}=\nabla.
$$
Since we only have one $\D$-operator for Charlier polynomials, we take $\varepsilon _n^h=\varepsilon _n=1$ in Theorem \ref{Teor1}, and when we use the results in the previous Section, we will remove all the superindices. In particular, the auxiliary numbers $\xi_{x,i}^h=\xi_{x,i}$ (see (\ref{defxi})) are then
\begin{equation}\label{defxich}
\xi_{x,i}=1.
\end{equation}

We can apply Theorem \ref{Teor1} to produce from arbitrary polynomials $R_j$, $j=1,\ldots , m$, a large class of sequences of polynomials $(q_n)_n$ satisfying higher order difference equations. But only for a convenient choice of the polynomials $R_j$, $j\ge 0$, these polynomials $(q_n)_n$ are also orthogonal with respect to a measure.
For $m=1$, it has been shown in \cite{du1} that $R_1(x)$ can be chosen to be equal to $c_j^{-a}(-x-1)$, $j\ge 0$. We now prove that, indeed,
the  polynomials $c_j^{-a}(-x-1)$, $j\ge 0$, satisfy the recurrence (\ref{relaR0}) for $\eta=\kappa=1$.

\begin{lemma}\label{lemRch} Consider the polynomials $R_j(x)=c_j^{-a}(-x-1)$, where $(c_j^{-a})_j$ are Charlier polynomials (\ref{Chpol}). Then they satisfy the recurrence (\ref{relaR0}), where $\varepsilon_n=1$, $\eta=\kappa=1$ and $a_n$, $b_n$ and $c_n$ are the recurrence sequences for the Charlier polynomials $(c_n^a)_n$ (\ref{Chttrr}).
\end{lemma}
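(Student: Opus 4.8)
The plan is to reduce the recurrence (\ref{relaR0}) for the $R_j$ to the second-order difference equation (\ref{Chdeq}) satisfied by the Charlier polynomials with the \emph{reflected} parameter $-a$, by means of the substitution $x\mapsto -x-1$ that is built into the definition $R_j(x)=c_j^{-a}(-x-1)$.

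First I would read off the recurrence coefficients of the Charlier family from (\ref{Chttrr}): comparing with the generic three-term recurrence (\ref{ttrr}) gives $a_n=n$, $b_n=n+a$ and $c_n=a$. Substituting these together with $\varepsilon_n=1$ and $\eta=\kappa=1$ into (\ref{relaR0}), the identity to be verified becomes
$$
(n+1)R_j(n+1)-(n+a)R_j(n)+aR_j(n-1)=(j+1)R_j(n),\qquad n\in\ZZ.
$$

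Next I would insert $R_j(n)=c_j^{-a}(-n-1)$ and change variables by setting $y=-n-1$. Then $R_j(n+1)=c_j^{-a}(y-1)$, $R_j(n)=c_j^{-a}(y)$, $R_j(n-1)=c_j^{-a}(y+1)$, while the scalar coefficients become $n+1=-y$ and $n+a=a-y-1$. The displayed identity thus reads
$$
-y\,c_j^{-a}(y-1)+(y+1-a)\,c_j^{-a}(y)+a\,c_j^{-a}(y+1)=(j+1)c_j^{-a}(y).
$$
Moving one copy of $c_j^{-a}(y)$ from the left to the right-hand side turns this into
$$
-y\,c_j^{-a}(y-1)+(y-a)\,c_j^{-a}(y)+a\,c_j^{-a}(y+1)=j\,c_j^{-a}(y),
$$
which is precisely the second-order difference equation (\ref{Chdeq}) with $a$ replaced by $-a$, applied to $c_j^{-a}$ at the point $y$, that is $D_{-a}(c_j^{-a})=j\,c_j^{-a}$.

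There is no genuine obstacle here; the only points requiring care are the bookkeeping in the change of variables $y=-n-1$ (note in particular that it interchanges the roles of the forward shift $R_j(n+1)$ and the backward shift $R_j(n-1)$ as $c_j^{-a}(y-1)$ and $c_j^{-a}(y+1)$), and the observation that the mismatch by one unit between the middle coefficient $y+1-a$ and the eigenvalue $j+1$ is exactly the constant $\kappa=1$, which cancels the analogous constant produced on the right. Since each side is a polynomial in $n$ (equivalently in $y$) and (\ref{Chdeq}) is a polynomial identity in its argument, the equality holds for every $n\in\ZZ$, as required.
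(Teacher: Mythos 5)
Your proof is correct and follows essentially the same route as the paper: the paper's proof also reduces the recurrence (with $a_n=n$, $b_n=n+a$, $c_n=a$, $\varepsilon_n=1$, $\eta=\kappa=1$) to the identity $(n+1)c_j^{-a}(-n-2)-(n+a)c_j^{-a}(-n-1)+ac_j^{-a}(-n)=(j+1)c_j^{-a}(-n-1)$ and observes that it follows by substituting $x=-n-1$ into the second-order difference equation (\ref{Chdeq}) for $(c_j^{-a})_j$. You have merely made explicit the bookkeeping (the shift reversal under $y=-n-1$ and the cancellation of the constant $\kappa=1$) that the paper leaves as ``straightforward.''
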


\begin{proof}
We have to prove that
$$
(n+1)c_j^{-a}(-n-2)-(n+a)c_j^{-a}(-n-1)+ac_j^{-a}(-n)=(j+1)c_j^{-a}(-n-1),\quad n\in \ZZ.
$$
But this follows straightforwardly by writing $x=-n-1$ in the second order difference equation (\ref{Chdeq}) for the Charlier polynomials $(c_j^{-a})_j$.
\end{proof}

We are now ready to prove the Theorem \ref{mthch} in the Introduction.

\begin{proof}
To prove (1) of Theorem \ref{mthch}, we use the strategy of the previous Section. Observe first that from the previous Lemma we have $\tilde G=\{g_1+1,\ldots,g_m+1\}$.

To start with, we assume the following claim (which we prove later).
\begin{align}\label{foeq1ch}
\langle \tilde \rho_a^F,c_n^a\rangle &=e^aa^{g_m}(-1)^{n+m-1}\sum_{i=1}^{m}\frac{c_{g_i}^{-a}(-n-1)}{\p _{\tilde G}'(g_i+1)c_{g_i}^{-a}(0)},\quad n\ge 0,\\
\label{foeq2ch}
0&=\sum_{i=1}^{m}\frac{c_{g_i}^{-a}(-n-1)}{\p _{\tilde G}'(g_i+1)c_{g_i}^{-a}(0)},\quad 1-m\le n<0,\\
\label{foeq3ch}
0&\not =\sum_{i=1}^{m}\frac{c_{g_i}^{-a}(m-1)}{\p _{\tilde G}'(g_i+1)c_{g_i}^{-a}(0)},
\end{align}
where $\p _{\tilde G}(x)=\prod_{j=1}^m(x-g_i-1)$.

Since $R_{g_i}(x)=c_{g_i}^{-a}(-x-1)$ (Lemma \ref{lemRch}) and $\xi _{x,y}=1$, for all $x\in \RR$, $y\ge 0$ (\ref{defxich}), the orthogonality of the polynomials $(q_n)_n$ with respect to $\tilde \rho_a^F$ is now a consequence of Lemma \ref{lort2}.

We now prove (2) of Theorem \ref{mthch}. The fact that the polynomials $(q_n)_n$ are eigenfunctions of a higher order difference operator is a direct consequence of Theorem \ref{Teor1}. Indeed, it is enough to take into account that the $\D $-operator for the Charlier polynomials is $\mathcal{D}=\nabla$ and it is generated by the sequence $\varepsilon _n=1$, $n\ge 1$. Take now $S=1$  in Theorem \ref{Teor1} and consider the difference operator $D_q$ given by (\ref{Dq}).
Write $F=\{f_1,\ldots ,f_k\}$ with $f_i<f_{i+1}$, so that $f_k=\max F$ and $k$ is the number of elements of $F$.
We now prove that the difference operator $D_q$ has the form (\ref{doho}) with $-s=r=\sum_{f\in F}f-\frac{k(k-1)}{2}+1$.

Since $G=\{g_1,\ldots , g_m\}$ with $g_i<g_{i+1}$, using Lemma \ref{lgp1} we conclude that $\Omega _G$ is a polynomial of degree $\sum_{g\in G}g-m(m-1)/2$.
Hence the polynomial $P$ defined by $P(x)-P(x-1)=\Omega (x)$ has degree $\sum_{g\in G}g-m(m-1)/2+1$.
Taking into account that $G=I(F)$ and the definition of the involution $I$ (\ref{dinv}), we have
$$
\sum_{g\in G}g=f_k(f_k+1)/2-kf_k+\sum_{f\in F}f.
$$
Since $m=n_{I(F)}=f_k-k+1$, we get
$$
\sum_{g\in G}g-\frac{m(m-1)}{2}+1=\sum_{f\in F}f-\frac{k(k-1)}{2}+1=r.
$$
That is, $P$ is a polynomial of degree $r$. This implies that the operator $P(D_a)$ has the form
$$
\sum _{l=-r}^{r}\tilde h_{l}(x)\Sh_l,
$$
where $\tilde h_r,\tilde h_{-r}\not =0$. Moreover, taking into account that the coefficient of $\mathfrak{s}_{-1}$  in $D_a$ \eqref{Chdeq} is $-x$, we have $\tilde h_{-r}(x)=u_1(-1)^{r}x(x-1)\cdots (x-r+1)$, where $u_1$ denotes the leading coefficient of the polynomial $P$. Hence $\deg(\tilde h_{-r}(x))=r$.

On the other hand, using Lemma \ref{lgp2}, we can conclude that the polynomials $M_h$ (\ref{emeiexp}) have degree at most
$v_h=\sum_{g\in G}g-m(m-1)/2-g_h$. Since $R_h$ has degree $g_h$, this shows that the operator $M_h(D_a)\nabla R_h(D_a)$ has the form
$$
\sum _{l=-r}^{r-1}\hat h_{l}(x)\Sh_l,
$$
where $\hat h_{-r}(x)=u_2u_3(-1)^{r-1}x(x-1)\cdots (x-v_h+1)(x-v_h-1)\cdots (x-r+1)$ where $u_2$ is the leading coefficient of $R_h$ and
$u_3$ is the coefficient of $x^{v_h}$ in $M_h$. Hence $\deg(\hat h_{-r}(x))\le r-1$.
To complete the proof of (2), it is enough to take into account the expression of $D_q$ given by (\ref{Dq}).

\bigskip

We finally prove the claim. We proceed in two steps.

\bigskip

\noindent
\textbf{First step}.
\textit{Assume that (\ref{foeq1ch}), (\ref{foeq2ch}) and (\ref{foeq3ch}) hold for the singleton $F=\{l\}$, $l\ge 1$. Then
(\ref{foeq1ch}), (\ref{foeq2ch}) and (\ref{foeq3ch}) also hold for any finite set $F$ of positive integers.}

Write $F=\{ f_1,f_2,\ldots , f_k\}$, with $f_i< f_{i+1}$, and $F_1=\{ f_2,\ldots , f_k\}$. We first prove that if (\ref{foeq1ch}), (\ref{foeq2ch}) and (\ref{foeq3ch}) hold for $F_1$, then also hold for $F$.

We have that $\tilde \rho_a^F=(x+f_k+1-f_1)\tilde \rho _a^{F_1}$. It is not difficult to see that if $I(F_1)=G_1=\{ g_1^1,g_2^1,\ldots , g_{m+1}^1\}$ then $I(F)=G=\{ g_1^1,g_2^1,\ldots , g_{m+1}^1\}\setminus \{f_k-f_1\} $; write then $g_{i_0}^1=f_k-f_1$ and $G=\{ g_1,g_2,\ldots , g_{m}\}$. Call also $\tilde G_1=\{ g_1^1+1,g_2^1+1,\ldots , g_{m+1}^1+1\}$ and $\tilde G=\{ g_1+1,g_2+1,\ldots , g_{m}+1\}$.This gives for $i=1,\ldots ,i_0-1$ that $g_i=g_i^{1}$ and
$$
\frac{1}{\p _{\tilde G}'(g_i+1)}=\frac{g_i-f_k+f_1}{\p _{\tilde G_1}'(g_i^1+1)},
$$
and for $i=i_0,\ldots ,m$, $g_i=g_{i+1}^{1}$ and
$$
\frac{1}{\p _{\tilde G}'(g_i+1)}=\frac{g_i-f_k+f_1}{\p _{\tilde G_1}'(g_{i+1}^1+1)}.
$$
Using Lemma \ref{lort1}, we have for $n\ge 0$
\begin{align*}
\langle \tilde \rho_a^F,p_n\rangle &=\langle \tilde \rho_a^{F_1},(x+f_k+1-f_1)p_n\rangle \\
&=e^aa^{g^1_{m+1}}(-1)^{n+m}\sum_{i=1}^{m+1}(-g_i^1+f_k-f_1)\frac{c_{g_i}^{-a}(-n-1)}{\p _{\tilde G_1}'(g_i^1+1)c_{g_i}^{-a}(0)}\\
&=e^aa^{g_m}(-1)^{n+m-1}\sum_{i=1}^{m}\frac{c_{g_i}^{-a}(-n-1)}{\p _{\tilde G}'(g_i+1)c_{g_i}^{-a}(0)}.
\end{align*}
On the other hand for $1-m\le n<0$ we have
$$
\sum_{i=1}^{m}\frac{c_{g_i}^{-a}(-n-1)}{\p _{\tilde G}'(g_i+1)c_{g_i}^{-a}(0)}
=\sum_{i=1}^{m+1}(g_i^1-f_k+f_1)\frac{c_{g_i}^{-a}(-n-1)}{\p _{\tilde G_1}'(g_i^1+1)c_{g_i}^{-a}(0)},
$$
since $1\le n+m$, by applying (\ref{lort1.2}) to $G_1$, we get that this last sum is equal to $0$
(take into account that $R_{g_i}(x)=c_{g_i}^{-a}(-x-1)$  and $\xi _{x,y}=1$, for all $x\in \RR$, $y\ge 0$).
Finally for $n=-m$, by applying (\ref{lort1.4}) to $G_1$, we get
$$
\sum_{i=1}^{m}\frac{c_{g_i}^{-a}(m-1)}{\p _{\tilde G}'(g_i+1)c_{g_i}^{-a}(0)}
=\sum_{i=1}^{m+1}(g_i^1-f_k+f_1)\frac{c_{g_i}^{-a}(m-1)}{\p _{\tilde G_1}'(g_i^1+1)c_{g_i}^{-a}(0)}\not =0.
$$
The proof of the Step 1 now follows easily by induction.

\bigskip

\noindent
\textbf{Second step}.
\textit{Let $m$ be a positive integer. Then
(\ref{foeq1ch}), (\ref{foeq2ch}) and (\ref{foeq3ch})  hold for $F=\{ m\}$.}

Indeed, since $I(F)=G=\{1,2,\ldots ,m\}$, write $\tilde G=\{2,3,\ldots ,m+1\}$. Then we have that $\p_{\tilde G}(x)=m!\binom{x-2}{m}$, and hence
$$
\frac{1}{\p_{\tilde G}'(i+1)}=\frac{(-1)^{m+i}}{(m-1)!}\binom{m-1}{i-1}.
$$
Since $c_i^{-a}(0)=a^i/i!$, we then have to prove that
\begin{align}\label{lortCh1}
\langle \tilde\rho _a^F, c_n^a\rangle&=(-1)^n e^a\sum_{i=1}^m\frac{(-1)^{i+1}}{(m-1)!}\binom{m-1}{i-1}i!a^{m-i}c_i^{-a}(-n-1),\quad n\ge0\\ \label{lortCh2}
0&=\sum_{i=1}^m\frac{(-1)^{i+1}}{(m-1)!}\binom{m-1}{i-1}i!a^{m-i}c_i^{-a}(-n-1),\quad 1-m\le n< 0,
\\ \label{lortCh3}
0&\not =\sum_{i=1}^m\frac{(-1)^{i+1}}{(m-1)!}\binom{m-1}{i-1}i!a^{m-i}c_i^{-a}(m-1).
\end{align}
We first prove (\ref{lortCh2}) and (\ref{lortCh3}).
For $n<0$, using the duality (\ref{Chdua}) we get $i!c_i^{-a}(-n-1)=(-n-1)!a^{i+n+1}c_{-n-1}^{-a}(i)$. Hence
\begin{align*}
(-1)^{m+1}\sum_{i=1}^m&\frac{i!a^{m-i}c_i^{-a}(-n-1)}{\p_{\tilde G}'(i+1)}\\&=
(-1)^{m+1}(-n-1)!\sum_{i=1}^m\frac{a^{m-i}a^{i+n+1}c_{-n-1}^{-a}(i)}{\p_{\tilde G}'(i+1)}
\\&=
(-1)^{m+1}a^{m+n+1}\sum_{i=1}^m\frac{c_{-n-1}^{-a}(i)}{\p_{\tilde G}'(i+1)}.
\end{align*}
Since $c_{-n-1}^{-a}(i)$ is a polynomial in $i$ of degree $-n-1$, we deduce (\ref{lortCh2}) and (\ref{lortCh3}) from (\ref{sumuif}).

We now prove (\ref{lortCh1}). Write
$$
\nu_{m,n}(a)=(-1)^n e^a\sum_{i=1}^m\frac{(-1)^{i+1}}{(m-1)!}\binom{m-1}{i-1}i!a^{m-i}c_i^{-a}(-n-1).
$$
To stress the dependence of $\tilde \rho_a^F$ on $m$ and $a$, we write $\tilde \rho_a^m$ instead of $\tilde \rho_a^F$. We have defined $\tilde \rho_a^m$ and $(c_n^a)_n$ only for $a\not =0$, but it is easy to see that the limit case $a=0$ is given by $\tilde \rho_0^m=-m\delta_{-m-1}$ and $c_n^0(x)=\binom{x}{n}$. A simple computation shows that $\langle\tilde \rho_0^m,c_n^0\rangle =v_{m,n}(0)$.
Hence, since both $\nu_{n,m}(a)$ and $\langle\tilde \rho_a^m,c_n^a\rangle$ are analytic functions of $a$, (\ref{lortCh1}) will follow if we prove that
\begin{equation}\label{xx1}
\frac{d}{da}(\langle \tilde \rho _a^m, c_n^a\rangle) =\frac{d}{da}(\nu_{m,n}(a)),\quad n\ge 0.
\end{equation}

We proceed by induction on $m$. For $m=1$ and $n\ge 0$ we have from \cite{du1}, Lemma 4.2 that
$$
\langle \tilde \rho _a^1, c_n^a\rangle=(-1)^ne^ac_1^{-a}(-n-1)=\nu_{1,n}(a),
$$
from where we see that (\ref{xx1}) holds for $m=1$.

It is easy to see that $d(c_n^a)/da=-c_{n-1}^a$ and $d(\tilde \rho_{a,x}^m)/da=\tilde \rho_{a,x}^{m-1}$, where $\tilde\rho_{a,x}^m$ is the mass
of the measure $\tilde\rho_{a}^m$ at $x$.
Hence
\begin{equation*}\label{xx2}
\frac{d}{da}(\langle \tilde \rho _a^m, c_n^a\rangle) =\langle \tilde \rho _a^{m-1}, c_n^a\rangle-\langle \tilde \rho _a^m, c_{n-1}^a\rangle .
\end{equation*}
On the other hand, using (\ref{Chlad}), we get
\begin{align*}
\frac{d}{da}(\nu_{m,n}(a))&=(-1)^ne^a\sum_{i=1}^m\frac{(-1)^{i+1}}{(m-1)!}
\binom{m-1}{i-1}i! \\&\quad \quad\quad\times (a^{m-i}(c_i^{-a}(-n-1)+c_{i-1}^{-a}(-n-1))+(m-i)a^{m-i-1}c_i^{-a}(-n-1))\\
&=(-1)^ne^a\sum_{i=1}^{m-1}\frac{(-1)^{i+1}}{(m-2)!} \binom{m-2}{i-1}i!a^{m-i-1}c_i^{-a}(-n-1)\\&\quad\quad\quad -(-1)^{n-1}e^a\sum_{i=1}^m\frac{(-1)^{i+1}}{(m-1)!}
\binom{m-1}{i-1}a^{m-i}i!c_i^{-a}(-n)\\
&=\nu_{m-1,n}(a)-\nu_{m,n-1}(a).
\end{align*}
In particular using (\ref{lortCh2}), we get for $n=0$ that $\frac{d}{da}(\nu_{m,0}(a))=\nu_{m-1,0}(a)$.

The proof of the Step 2 can now be completed easily by induction on $n$.

\bigskip

The proof of the Claim is now an easy consequence of Steps 1 and 2.

\end{proof}

\begin{corollary}\label{jodch}
Let $F$ be a finite set of positive integers and consider the associated set $I(F)=G=\{ g_1,\ldots, g_m\}$ where the involution $I$ is defined by (\ref{dinv}). Let $\rho_a$ be the Charlier measure (\ref{Chw}) and $(c_n^a)_n$ its sequence of orthogonal polynomials defined by (\ref{Chpol}). Assume that the $m\times m$ Casorati determinant $\Omega _F$ defined by
$$
\Omega_F (n)=\det \left(c^{-a}_{g_l}(-n-j-1)\right)_{l,j=1}^m,
$$
does not vanish for $n\ge 0$: $\Omega_F (n)\not =0$ for $n\ge 0$.
Then the measure $\rho _a^F$ has associated a sequence of orthogonal polynomials and Conjecture A is true.
\end{corollary}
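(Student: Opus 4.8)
The plan is to read this corollary as a change-of-variables consequence of Theorem \ref{mthch}, so that essentially all the analytic work has already been done and only bookkeeping remains.

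First I would observe that the Casorati determinant $\Omega_F$ in the statement of the corollary is literally the determinant $\Omega_G$ appearing in the hypothesis of Theorem \ref{mthch}: with $G=I(F)=\{g_1,\ldots,g_m\}$, both equal $\det\left(c^{-a}_{g_l}(-n-j-1)\right)_{l,j=1}^m$. Hence the assumption $\Omega_F(n)\neq 0$ for $n\ge 0$ is exactly the hypothesis needed to invoke Theorem \ref{mthch}. Applying that theorem produces the sequence $(q_n)_n$ defined by \eqref{quschi}, which is orthogonal with respect to $\tilde\rho_a^F=\prod_{f\in F}(x+f_k+1-f)\rho_a(x+f_k+1)$ and whose members are eigenfunctions of a difference operator of the form \eqref{doho} with $-s=r=\sum_{f\in F}f-\tfrac{k(k-1)}{2}+1$, where $f_k=\max F$ and $k=n_F$.

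Next I would connect $\tilde\rho_a^F$ with the Conjecture~A measure $\rho_a^F=\prod_{f\in F}(x-f)\rho_a$ through a single integer shift. Writing $\lambda=f_k+1$ and using that multiplication by a polynomial commutes with the shift in the sense $\langle(\prod_{f\in F}(x-f)\rho_a)(x+\lambda),p\rangle=\langle\rho_a(x+\lambda),\prod_{f\in F}(x+\lambda-f)\,p\rangle$ (an immediate consequence of the definitions $\langle\mu(x+\lambda),p\rangle=\langle\mu,p(x-\lambda)\rangle$ and $\langle r\mu,p\rangle=\langle\mu,rp\rangle$ recalled in the Preliminaries), I obtain $\rho_a^F(x+f_k+1)=\tilde\rho_a^F$, equivalently $\rho_a^F=\tilde\rho_a^F(x-f_k-1)$. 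By the shift principle from the Preliminaries, since $(q_n)_n$ is an orthogonal polynomial sequence for $\tilde\rho_a^F$, the shifted sequence $(q_n(x-f_k-1))_n$ is an orthogonal polynomial sequence for $\rho_a^F$. This already proves that $\rho_a^F$ has an associated sequence of orthogonal polynomials.

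Finally I would transfer the bispectral property. If $D=\sum_{l=s}^r h_l(x)\Sh_l$ denotes the operator with $D(q_n)=\lambda_n q_n$, then conjugating by the shift $f\mapsto f(x-f_k-1)$ gives $\tilde D=\sum_{l=s}^r h_l(x-f_k-1)\Sh_l$, and a short direct computation shows $\tilde D\bigl(q_n(x-f_k-1)\bigr)=\lambda_n q_n(x-f_k-1)$. Since replacing the argument of each coefficient $h_l$ by $x-f_k-1$ leaves it a polynomial of the same degree and changes none of the shifts $\Sh_l$ that occur, $\tilde D$ has the same genre $(s,r)$ as $D$; thus the orthogonal polynomials of $\rho_a^F$ are eigenfunctions of a difference operator of the form \eqref{doho} with $-s=r=\sum_{f\in F}f-\tfrac{n_F(n_F-1)}{2}+1$ (using $k=n_F$). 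Because the orthogonal polynomial sequence of a moment functional is unique up to normalization and the eigenfunction property is unaffected by scaling, this yields Conjecture~A. I do not expect a genuine obstacle here, since the corollary is a transcription of Theorem \ref{mthch}; the only points requiring care are getting the direction of the shift right and checking that conjugation by the shift preserves the genre of the operator.
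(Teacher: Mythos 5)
Your proposal is correct and takes essentially the same route as the paper: the paper's proof is a one-line appeal to Theorem \ref{mthch} together with the observation that $\rho_a^F$ is the measure $\tilde\rho_a^F$ shifted by $f_k+1$, and your argument simply spells out the details (identification of $\Omega_F$ with $\Omega_G$, transfer of orthogonality under the shift, and conjugation of the difference operator preserving its genre) that the paper leaves implicit. The only discrepancy is that the paper writes $\rho_a^F=a^{f_k+1}\tilde\rho_a^F(x-f_k-1)$ with a constant factor while your computation gives the identity without it; this is immaterial, since rescaling a moment functional by a nonzero constant changes neither its orthogonal polynomials nor the eigenfunction property.
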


\begin{proof}
The proof is an easy consequence of the previous Theorem, taking into account that the measure $\rho_a^F$ is
equal to the measure $\tilde \rho _a^F$ shifted by $f_k+1$ (with $f_k=\max F$):
$\rho_a^F=a^{f_k+1}\tilde \rho _a^F (x-f_k-1)$.
\end{proof}

\section{Meixner polynomials}\label{sme}

In this Section we apply our method to the Meixner family.
For $a\not =0, 1$ we write $(m_{n}^{a,c})_n$ for the sequence of Meixner polynomials defined by
\begin{equation}\label{Mxpol}
m_{n}^{a,c}(x)=\frac{a^n}{(1-a)^n}\sum _{j=0}^n a^{-j}\binom{x}{j}\binom{-x-c}{n-j}
\end{equation}
(we have taken a slightly different normalization from the one used in \cite{Ch}, pp. 175-7, from where
the next formulas can be easily derived; see also \cite{KLS}, pp, 234-7 or \cite{NSU}, ch. 2).
Meixner polynomials are eigenfunctions of the following second order difference operator
\begin{equation*}\label{Mxdeq}
D_{a,c} =\frac{x\Sh_{-1}-[(1+a)x+ac]\Sh_0+a(x+c)\Sh_1}{a-1},\qquad D_{a,c} (m_{n}^{a,c})=nm_{n}^{a,c},\quad n\ge 0.
\end{equation*}
When $a\not =0, 1$, they satisfy the following three term recurrence formula ($m_{-1}=0$)
\begin{equation}\label{Mxttrr}
xm_n=(n+1)m_{n+1}-\frac{(a+1)n+ac}{a-1}m_n+\frac{a(n+c-1)}{(a-1)^2}m_{n-1}, \quad n\ge 0
\end{equation}
(to simplify the notation we remove the parameters in some formulas).
Hence, for $a\not =0,1$ and $c\not =0,-1,-2,\ldots $, they are always orthogonal with respect to a moment functional $\rho_{a,c}$, which we
normalize
by taking $\langle \rho_{a,c},1\rangle =\Gamma(c)$. For $0<\vert a\vert<1$ and $c\not =0,-1,-2,\ldots $, we have
\begin{equation*}\label{MXw}
\rho_{a,c}=(1-a)^c\sum _{x=0}^\infty \frac{a^{x}\Gamma(x+c)}{x!}\delta _x.
\end{equation*}
The moment functional $\rho_{a,c}$ can be represented by a positive measure only when $0<a<1$ and $c>0$.

Meixner polynomials also satisfy
\begin{equation*}\label{Mxlad}
   \Delta(m_n^{a,c})=m_{n-1}^{a,c+1},\quad \frac{d}{da}(m_n^{a,c})=-\frac{c+n-1}{(a-1)^2}m_{n-1}^{a,c}.
\end{equation*}

\bigskip

Consider now the algebra of operators $\A$ defined by \eqref{algdiffd}. Lemma 5.1 of \cite{du1} provides two $\D$-operators for Meixner polynomials
(take into account that we are using here a different normalization for the Meixner polynomials):
For $a\neq0,1$, the operators $\mathcal{D}_1$ and $\mathcal{D}_2$ defined by \eqref{defTo} from the sequences $\varepsilon_n=a/(1-a)$ and $\tilde \varepsilon_n=1/(1-a), n\geq0$, respectively, are $\mathcal{D}$-operators for the  Meixner polynomials $(m_n^{a,c})_n$ \eqref{Mxpol} and the algebra $\A$. More precisely
$$
\mathcal{D}_1=\frac{a}{1-a}\Delta,\quad \mathcal{D}_2=\frac{1}{1-a}\nabla.
$$
Since we  have two $\D$-operators for Meixner polynomials, we make a partition of the  indices in Theorem \ref{Teor1}
and take $\varepsilon _n^h=a/(1-a)$, for $h=1,\ldots, m_1$ and $\varepsilon _n^h=1/(1-a)$, for $h=m_1+1,\ldots , m$. In particular, the auxiliary sequences of numbers $\xi_{x,i}^h$ (see (\ref{defxi})) are then
\begin{align}\label{defxime1}
\xi_{x,i}^h&=\frac{a^i}{(1-a)^i}, \quad \mbox{for $h=1,\ldots, m_1$,}\\
\nonumber\xi_{x,i}^h&=\frac{1}{(1-a)^i}, \quad \mbox{for $h=m_1+1,\ldots, m$.}
\end{align}

We can apply Theorem \ref{Teor1} to produce from arbitrary polynomials $R_j$, $j\ge 0$, a large class of sequences of polynomials $(q_n)_n$ satisfying higher order difference equations. But only for a convenient choice of the polynomials $R_j$, $j\ge 0$, these polynomials $(q_n)_n$ are also orthogonal with respect to a measure. There is again a very nice symmetry  in the choice of these polynomials. Indeed, the polynomials $R_j$ have to be Meixner polynomials but with different parameters.

\begin{lemma}\label{lemRme} Let $a\not =0,1$. Consider the polynomials
$$
R_j(x)=m_j^{1/a,2-c}(-x-1),\quad j\ge 0.
$$
Then they satisfy the recurrence (\ref{relaR0}), where $\varepsilon_n=a/(1-a)$, $\eta=-1$, $\kappa=c-1$ and $a_n$, $b_n$ and $c_n$ are the recurrence sequences for the Meixner polynomials $(m_n^{a,c})_n$ (\ref{Mxttrr}). In the same way, the polynomials
$$
R_j(x)=m_j^{a,2-c}(-x-1),\quad j\ge 0,
$$
satisfy the recurrence (\ref{relaR0}), where $\varepsilon_n=1/(1-a)$, $\eta=1$, $\kappa=1$  and $a_n$, $b_n$ and $c_n$ are the recurrence sequences for the Meixner polynomials $(m_n^{a,c})_n$ (\ref{Mxttrr}).
\end{lemma}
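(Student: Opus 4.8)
The plan is to mimic exactly the proof of Lemma \ref{lemRch} for the Charlier case: to verify each of the two claimed recurrences by specializing the second order difference equation satisfied by a suitably reparametrized Meixner family, and then substituting $x=-n-1$. The essential point is that the reflection $x\mapsto -x-1$ turns the shifts $m(x-1),m(x),m(x+1)$ into the three consecutive values $R_j(n+1),R_j(n),R_j(n-1)$ that appear in (\ref{relaR0}), where $a_n=n$, $b_n$ and $c_n$ are read off from (\ref{Mxttrr}).

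For the first assertion I would start from the eigenvalue equation $D_{1/a,2-c}(m_j^{1/a,2-c})=j\,m_j^{1/a,2-c}$, that is, the second order difference equation for Meixner polynomials with parameters $(1/a,2-c)$. Writing it out explicitly, clearing the denominator $1/a-1=(1-a)/a$, and evaluating at $x=-n-1$ (so that $R_j(n+1)=m_j^{1/a,2-c}(-n-2)$, $R_j(n)=m_j^{1/a,2-c}(-n-1)$ and $R_j(n-1)=m_j^{1/a,2-c}(-n)$), one obtains a three-term relation in $n$. Multiplying through by $\varepsilon_n=a/(1-a)$ and collecting terms, I would check that the coefficients of $R_j(n+1)$ and $R_j(n-1)$ match $\varepsilon_{n+1}a_{n+1}=\frac{a(n+1)}{1-a}$ and $c_n/\varepsilon_n=\frac{n+c-1}{1-a}$, while the coefficient of $R_j(n)$ reproduces $-b_n$ together with the eigenvalue contribution. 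The residual scalar identity then forces the right-hand side to be $(-j+c-1)R_j(n)$, giving $\eta=-1$ and $\kappa=c-1$.

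The second assertion is handled identically, now starting from $D_{a,2-c}(m_j^{a,2-c})=j\,m_j^{a,2-c}$ and using $\varepsilon_n=1/(1-a)$. After the same substitution $x=-n-1$ and division by $a-1=-(1-a)$, the coefficients of $R_j(n+1)$ and $R_j(n-1)$ become $\frac{n+1}{1-a}=\varepsilon_{n+1}a_{n+1}$ and $\frac{a(n+c-1)}{1-a}=c_n/\varepsilon_n$, and matching the $R_j(n)$ coefficient with $-b_n$ isolates an eigenvalue term equal to $(j+1)R_j(n)$, so that $\eta=1$ and $\kappa=1$.

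The whole argument is a direct computation, so the only genuine obstacle is bookkeeping: keeping track of the signs introduced by the reflection $x\mapsto-x-1$, of the factors $a-1$ versus $1-a$ coming from the two different normalizations of $\varepsilon_n$, and of the constant and $j$-linear pieces that must be separated off to read off $\eta$ and $\kappa$. Since (\ref{relaR0}) is required to hold for every $n\in\ZZ$ and the substitution is valid for all integers $n$, no domain or convergence issues arise, and each of the two resulting scalar identities can be confirmed simply by expanding both sides as polynomials in $n$.
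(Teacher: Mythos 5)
Your proposal is correct and takes essentially the same approach as the paper: the paper's proof of this lemma is literally ``similar to the proof of Lemma \ref{lemRch} for the Charlier case,'' i.e.\ exactly your substitution $x=-n-1$ into the second order difference equations for the reparametrized families $(m_j^{1/a,2-c})_j$ and $(m_j^{a,2-c})_j$, followed by matching coefficients against (\ref{relaR0}). All the coefficient identifications you state ($\varepsilon_{n+1}a_{n+1}$, $c_n/\varepsilon_n$, and the residual terms giving $\eta=-1,\kappa=c-1$ and $\eta=1,\kappa=1$) check out, the only slip being an inconsequential sign in the intermediate normalization (one multiplies through by $-a/(1-a)$, not $a/(1-a)$, in the first case).
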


\begin{proof}
It is similar to the proof of Lemma \ref{lemRch} for the Charlier case.
\end{proof}

We are now ready to prove the main Theorem of this Section.

\begin{theorem} Let $F_1$ and $F_2$ be two finite sets of positive integers (the empty set is allowed, in which case we take $\max F=-1$). For $h\ge 1$, consider the transformed sets $J_h(F_1)=H=\{ h_{1},\ldots, h_{m_1}\}$ and $I(F_2)=K=\{ k_{1},\ldots, k_{m_2}\}$, where the involution $I$ and the transform $J_h$ are defined by (\ref{dinv}) and (\ref{dinv2}), respectively. Define $m=m_1+m_2$. Let $a$ and $c$ be real numbers satisfying $a\not =0,1$ and
$c\not \in \{f_{1,M}+f_{2,M}+h-l,l\ge 0\} $, where $f_{i,M}=\max F_i$. Consider the Meixner polynomials $(m_n^{a,c})_n$ (\ref{Mxpol}). Assume that $\Omega_{a,c}^{H,K} (n)\not =0$ for $n\ge 0$ where the $m\times m$ Casorati determinant $\Omega _{a,c}^{H,K}$ is defined by
\begin{equation*}\label{casortme}
\Omega_{a,c}^{H,K} (n)=\begin{vmatrix}
m^{1/a,2-c}_{h_{1}}(-n) & \cdots &
m^{1/a,2-c}_{h_{1}}(-n+m-1) \\
              \vdots & \ddots & \vdots \\
               \displaystyle
               m^{1/a,2-c}_{h_{m_1}}(-n) & \cdots &m^{1/a,2-c}_{h_{m_1}}(-n+m-1)
               \\ \vspace{-.3cm} \\
\displaystyle\frac{m^{a,2-c}_{k_{1}}(-n)}{a^{m-1}} & \cdots &
m^{a,2-c}_{k_{1}}(-n+m-1) \\
               \vdots & \ddots & \vdots \\
\displaystyle\frac{m^{a,2-c}_{k_{m_2}}(-n)}{a^{m-1}}
               & \cdots &m^{a,2-c}_{k_{m_2}}(-n+m-1)
             \end{vmatrix}.
\end{equation*}
We then define the sequence of polynomials $(q_n)_n$ by
\begin{equation*}\label{qusch}
q_n(x)=\begin{vmatrix}
\displaystyle\frac{(1-a)^m m^{a,c}_n(x)}{a^m} & \displaystyle\frac{-(1-a)^{m-1}m^{a,c}_{n-1}(x)}{a^{m-1}} & \cdots & (-1)^mm^{a,c}_{n-m}(x) \\
m^{1/a,2-c}_{h_{1}}(-n-1) & m^{1/a,2-c}_{h_{1}}(-n) & \cdots &
m^{1/a,2-c}_{h_{1}}(-n+m-1) \\
               \vdots & \vdots & \ddots & \vdots \\
               m^{1/a,2-c}_{h_{m_1}}(-n-1) & \displaystyle
               m^{1/a,2-c}_{h_{m_1}}(-n) & \cdots &m^{1/a,2-c}_{h_{m_1}}(-n+m-1)
               \\ \\
\displaystyle\frac{m^{a,2-c}_{k_{1}}(-n-1)}{a^m} & \displaystyle\frac{m^{a,2-c}_{k_{1}}(-n)}{a^{m-1}} & \cdots &
m^{a,2-c}_{k_{1}}(-n+m-1) \\
               \vdots & \vdots & \ddots & \vdots \\
\displaystyle \frac{m^{a,2-c}_{k_{m_2}}(-n-1)}{a^m} & \displaystyle\frac{m^{a,2-c}_{k_{m_2}}(-n)}{a^{m-1}}
               & \cdots &m^{a,2-c}_{k_{m_2}}(-n+m-1)
             \end{vmatrix}.
\end{equation*}
Then

\noindent
(1) The polynomials $(q_n)_n$ are orthogonal with respect to the measure
$$
\tilde \rho^{F_1,F_2,h}_{a,c}=\prod_{f\in F_1}(x+c-f)\prod_{f\in F_2}(x+f_{2,M}+1-f)\rho_{a,c-f_{1,M}-f_{2,M}-h-1}(x+f_{2,M}+1).
$$
(2) The polynomials $(q_n)_n$ are eigenfunctions of a higher order difference operator of the form (\ref{doho}) with
$$
-s=r=\sum _{f\in F_2}f-\sum _{f\in F_1}f-\frac{n_{F_1}(n_{F_1}-1)}{2}-\frac{n_{F_2}(n_{F_2}-1)}{2}+n_{F_1}(f_{1,M}+h)+1
$$
(which can be explicitly constructed using Theorem \ref{Teor1}).
\end{theorem}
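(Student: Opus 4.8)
The plan is to follow the strategy of Section~\ref{ssi} and to mirror, now with the two $\D$-operators of Meixner, the proof of Theorem~\ref{mthch} for Charlier. By Lemma~\ref{lemRme} the correct choice of the data of Section~\ref{ssi} is $R_{g}^{h}(x)=m_{g}^{1/a,2-c}(-x-1)$ for $h=1,\ldots,m_1$ (so $\eta_h=-1$, $\kappa_h=c-1$) and $R_{g}^{h}(x)=m_{g}^{a,2-c}(-x-1)$ for $h=m_1+1,\ldots,m$ (so $\eta_h=1$, $\kappa_h=1$); after absorbing the constant $\xi$-factors \eqref{defxime1} into the rows, the determinant \eqref{quso} becomes exactly the $q_n$ displayed in the statement. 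The associated numbers $\tilde g_h=\eta_hg_h+\kappa_h$ are then $c-1-h_i$ ($i=1,\ldots,m_1$) and $k_j+1$ ($j=1,\ldots,m_2$), and the hypothesis $c\notin\{f_{1,M}+f_{2,M}+h-l:l\ge0\}$ is what guarantees both that the shifted Meixner functional in $\tilde\rho^{F_1,F_2,h}_{a,c}$ is well defined and that $\tilde G=\{\tilde g_1,\ldots,\tilde g_m\}$ consists of $m$ distinct numbers, so that Lemmas~\ref{lort1} and~\ref{lort2} apply.

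For part~(1) it then suffices, by Lemma~\ref{lort2}, to verify the three moment identities \eqref{foeq1}, \eqref{foeq2} and \eqref{foeq3} for the measure $\tilde\rho^{F_1,F_2,h}_{a,c}$. I would establish them by a double induction that peels one extreme element at a time off $F_2$ and off $F_1$, exactly as in the first step of the Charlier proof. Removing the smallest element $f$ of $F_2$ multiplies the measure by the linear factor $x+f_{2,M}+1-f$ and replaces $I(F_2)$ by $I(F_2)\setminus\{f_{2,M}-f\}$; removing an element of $F_1$ multiplies the measure by $x+c-f$ and modifies $J_h(F_1)$. In each case the partial-fraction bookkeeping with $\p_{\tilde G}'$ (through the identities \eqref{lort1.1}--\eqref{lort1.4}) transfers the three identities from the smaller to the larger configuration. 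The base case is the configuration with $F_1,F_2$ singletons (or one of them empty), where $H$ and $K$ are initial integer intervals and $\p_{\tilde G}$ factors as a binomial; there the Meixner analogue of the duality \eqref{Chdua} together with the residue Lemma \eqref{sumuif} collapses the sums, just as $c_i^{-a}(0)=a^i/i!$ and \eqref{Chdua} did for Charlier.

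For part~(2), apply Theorem~\ref{Teor1} with $S$ a suitable constant, so that $S\Omega$ and the functions $M_h$ \eqref{emeiexp} are polynomials; the order is then $r=\deg P_S=\deg\Omega+1$. Because the rows of $\Omega$ split into an $m_1$-block (from $\varepsilon=a/(1-a)$) and an $m_2$-block (from $\tilde\varepsilon=1/(1-a)$), with $\deg R_{h_i}^h=h_i$ and $\deg R_{k_j}^h=k_j$, Lemma~\ref{lgp1} (with $w=a$ separating the two blocks, the excluded values there reducing to $a\in\{0,1\}$, which the hypothesis forbids) gives $\deg\Omega=\sum_{h\in H}h+\sum_{k\in K}k-\binom{m_1}{2}-\binom{m_2}{2}$. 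It then remains to rewrite $\sum_{h\in H}h$ and $\sum_{k\in K}k$ in terms of $F_1,F_2$ using the definitions \eqref{dinv} and \eqref{dinv2} together with $n_H=f_{1,M}+h-n_{F_1}$ and $n_K=f_{2,M}-n_{F_2}+1$; a short computation reproduces the claimed value of $r$. Finally, $P_S(D_{a,c})$ supplies the extreme shifts $\Sh_{\pm r}$ with nonzero coefficients of degree $r$ (the genre of $D_{a,c}$ being $(-1,1)$), whereas Lemma~\ref{lgp2} keeps $\deg M_h$ small enough that each summand $M_h(D_{a,c})\D_hR_h(D_{a,c})$ in \eqref{Dq} has, at the shifts $\Sh_{\pm r}$, either a vanishing coefficient or one of degree below $r$; hence no cancellation occurs and $D_{q,S}$ has genre exactly $(-r,r)$.

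The main obstacle is the base case of the moment identities. Unlike in the Charlier case, the $\xi$-factors \eqref{defxime1} are nontrivial and differ between the two blocks, so the telescoping built into Lemma~\ref{lort1} mixes powers of $a/(1-a)$ and $1/(1-a)$, and tracking these while invoking the Meixner duality is delicate. As for Charlier, I expect the cleanest route is to verify the identities first at a degenerate value of the parameters, where $\rho_{a,c}$ collapses to a single mass and the Meixner polynomials degenerate to binomials, and then to propagate in $c$ and $a$ using the ladder relations $\Delta(m_n^{a,c})=m_{n-1}^{a,c+1}$ and $\frac{d}{da}(m_n^{a,c})=-\frac{c+n-1}{(a-1)^2}m_{n-1}^{a,c}$ together with analyticity in $a$, exactly paralleling the differentiation-in-$a$ argument of the second step for Charlier.
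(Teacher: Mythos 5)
Your proposal follows essentially the same route as the paper: the same identification of the data $R^i_j$, $\eta_h$, $\kappa_h$, $\tilde G$ via Lemma~\ref{lemRme}, the same reduction of part~(1) to the three moment identities \eqref{foeq1}--\eqref{foeq3} (handled by Lemmas~\ref{lort1} and~\ref{lort2}, then verified by a Charlier-style peeling induction plus a duality/differentiation-in-$a$ base case), and the same part~(2) argument via Theorem~\ref{Teor1} with Lemmas~\ref{lgp1} (with $w$ essentially equal to $a$, excluded values $\{0,1\}$) and~\ref{lgp2}, including the correct arithmetic recovering the stated $r$. In fact the paper's own proof is even more compressed --- it merely sets up $R^i_j$ and $\p_{\tilde G}$, states the normalized moment identities, and defers everything else to the proof of Theorem~\ref{mthch} --- so your sketch is precisely the intended argument.
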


\begin{proof}
First of all, notice that we have performed a straightforward normalization of the polynomials $q_n$, $n\ge 0$ (with respect to (\ref{qus})).

Write $R^i_j$, $i=1,\ldots ,m, j\ge 0$, for the polynomials
$$
R^i_j(x)=\begin{cases} m_j^{1/a,2-c}(-x-1),&i=1,\ldots , m_1,\\
m_j^{a,2-c}(-x-1),&i=m_1+1,\ldots , m,\end{cases}
$$
$G$ for the $m$-tuple $G=(h_1,\cdots,h_{m_1},k_1,\cdots,k_{m_2})=(g_1,\ldots, g_m)$
and $\p _{\tilde G}$ for the polynomial
$$
\p_{\tilde G}(x)=\prod_{j=1}^{m_1}(x+h_{j}-c+1)\prod_{j=m_1+1}^{m}(x-k_{j-m_1}-1),
$$
where $\tilde G=\{-h_1+c-1,\ldots,-h_{m_1}+c-1, k_1+1,\ldots, k_{m_2}+1\}$.

Since $c\not \in \{f_{1,M}+f_{2,M}+h-l+1,l\ge 0\} $, from the definition of $J_h(F_1)$ and $I(F_2)$ (see (\ref{dinv2}) and (\ref{dinv})), we conclude that $\p_{\tilde G}$ has simple roots.

The Theorem can be proved in a similar way to Theorem \ref{mthch} but using the following identities instead of (\ref{foeq1ch}), (\ref{foeq2ch}) and
(\ref{foeq3ch}):
\begin{align*}
\frac{(-1)^{m+1}(a-1)^{f_{1,M}+f_{2,M}+h-1}}{a^{f_{2,M}}\Gamma(c-1)}\langle \tilde \rho_{a,c}^{F_1,F_2,h},m_n^{a,c}\rangle &=(-1)^n\sum_{i=1}^{m}\frac{\xi^i_{n,n+1}R^i_{g_i}(n)}{\p _{\tilde G}'(\tilde g_i)R^i_{g_i}(-1)},\quad n\ge 0,\\
\sum_{i=1}^{m}\frac{R^i_{g_i}(n)}{\xi^i_{-1,-n-1}\p _{\tilde G}'(\tilde g_i)R^i_{g_i}(-1)}&=0,\quad 1-m\le n<0,\\
\sum_{i=1}^{m}\frac{R^i_{g_i}(n)}{\xi^i_{-1,m-1}\p _{\tilde G}'(\tilde g_i)R^i_{g_i}(-1)}&\not =0,
\end{align*}
where the numbers $\xi_{x,y}^i$ are defined by (\ref{defxime1}) and $\tilde g_i=\begin{cases} -h_i+c-1,& i=1,\ldots , m_1, \\k_{i-m_1}+1,& i=m_1+1,\ldots m. \end{cases}$
\end{proof}

\begin{corollary}\label{jodme}
Let $F_1$ and $F_2$ be two finite sets of positive integers (the empty set is allowed, in which case we take $\max F=-1$). For $a\not =0,1$ and $c\not =0,-1,-2,\ldots $, consider the weight $\rho _{a,c}^{F_1,F_2}$ defined by
$$
\rho _{a,c}^{F_1,F_2}=\prod_{f\in F_1}(x+c+f)\prod_{f\in F_2}(x-f)\rho_{a,c},
$$
where $\rho_{a,c}$ is the Meixner weight. Define $\tilde F_1=\{f_{1,M}-f+1,f\in F_1\}$, $h=\min F_1$ and $\tilde c=c+f_{1,M}+f_{2,M}+2$ (where as before $f_{i,M}=\max F_i$) and assume that
$$
\Omega_{a,\tilde c}^{J_{h}(\tilde F_1),I(F_2)}(n)\not =0, \quad n\ge 0.
$$
Then the measure $\rho _{a,c}^{F_1,F_2}$ has associated a sequence of orthogonal polynomials and Conjecture B is true.
\end{corollary}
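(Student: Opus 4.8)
The plan is to deduce this Corollary from the preceding Theorem of this Section in exactly the way Corollary~\ref{jodch} is deduced from Theorem~\ref{mthch} in the Charlier case: I will exhibit the Conjecture~B measure $\rho_{a,c}^{F_1,F_2}$ as a shift (up to a nonzero constant) of the measure $\tilde\rho^{\tilde F_1,F_2,h}_{a,\tilde c}$ produced by that Theorem when it is fed the transformed data $(\tilde F_1,F_2)$, the integer $h=\min F_1$ and the parameter $\tilde c=c+f_{1,M}+f_{2,M}+2$. All the substantive content---existence of the orthogonal polynomials and the genre of the difference operator---is then inherited from the Theorem, so the only real work is the bookkeeping of the two set transforms and of the three parameters.

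First I would check that the chosen data are admissible for the Theorem. Writing $\tilde f_{1,M}=\max\tilde F_1=f_{1,M}-h+1$, the Theorem's forbidden set for its parameter becomes $\{\tilde f_{1,M}+f_{2,M}+h-l:l\ge0\}$; since $\tilde f_{1,M}+f_{2,M}+h=f_{1,M}+f_{2,M}+1$, the requirement $\tilde c\notin\{\tilde f_{1,M}+f_{2,M}+h-l:l\ge0\}$ is equivalent to $c+1\notin\{0,-1,-2,\dots\}$, i.e. $c\neq-1,-2,\dots$. Moreover $\tilde c$ differs from $c$ by the positive integer $f_{1,M}+f_{2,M}+2$, so $\tilde c\neq0,-1,\dots$ follows automatically from $c\neq0,-1,\dots$ and the underlying Meixner functional is well defined. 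Thus the Corollary's standing hypotheses $a\neq0,1$ and $c\neq0,-1,-2,\dots$ are exactly what is needed to apply the Theorem, and the nonvanishing assumption on $\Omega^{J_h(\tilde F_1),I(F_2)}_{a,\tilde c}$ matches the Theorem's Casorati hypothesis verbatim. This translation of the admissibility conditions is the step I expect to require the most care.

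Next I would identify the measures. Using $\tilde F_1=\{f_{1,M}-f+1:f\in F_1\}$ and $\tilde c-f_{1,M}-1=c+f_{2,M}+1$, the factor $\prod_{f\in\tilde F_1}(x+\tilde c-f)$ reindexes to $\prod_{g\in F_1}(x+c+f_{2,M}+1+g)$; the factor $\prod_{f\in F_2}(x+f_{2,M}+1-f)$ is untouched; and the underlying weight is $\rho_{a,\tilde c-\tilde f_{1,M}-f_{2,M}-h-1}=\rho_{a,c}$ shifted by $f_{2,M}+1$. Collecting these pieces gives
\[
\tilde\rho^{\tilde F_1,F_2,h}_{a,\tilde c}(x)=\rho_{a,c}^{F_1,F_2}(x+f_{2,M}+1)
\]
up to a nonzero normalizing constant. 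By the shift rule $\langle\mu(x+\lambda),p\rangle=\langle\mu,p(x-\lambda)\rangle$ from the Preliminaries, the polynomials $q_n(x-f_{2,M}-1)$ are orthogonal with respect to $\rho_{a,c}^{F_1,F_2}$; in particular this measure has an associated orthogonal sequence.

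Finally, for part~(2) of Conjecture~B I would substitute the transformed data into the Theorem's order formula. Using $n_{\tilde F_1}=n_{F_1}$, $\sum_{f\in\tilde F_1}f=n_{F_1}(f_{1,M}+1)-\sum_{f\in F_1}f$ and $\tilde f_{1,M}+h=f_{1,M}+1$, the contribution $n_{\tilde F_1}(\tilde f_{1,M}+h)$ cancels $-\sum_{f\in\tilde F_1}f$, leaving
\[
-s=r=\sum_{f\in F_1}f+\sum_{f\in F_2}f-\frac{n_{F_1}(n_{F_1}-1)}{2}-\frac{n_{F_2}(n_{F_2}-1)}{2}+1=r_{F_1}+r_{F_2}-1,
\]
which is precisely the genre asserted in Conjecture~B. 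Since shifting the variable does not change the order of a difference operator, the operator built by the Theorem retains this genre after the shift, and the proof is complete.
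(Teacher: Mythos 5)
Your proposal is correct and takes essentially the same approach as the paper: the paper's entire proof consists of the identity $\rho_{a,c}^{F_1,F_2}=(1-a)^{c-\tilde c}\,\tilde\rho_{a,\tilde c}^{\tilde F_1,F_2,h}(x-f_{2,M}-1)$ followed by an appeal to the preceding Theorem, and your admissibility check, measure identification, and order bookkeeping are precisely the details it leaves implicit. The only discrepancy is the multiplicative normalizing constant (the paper writes $(1-a)^{c-\tilde c}$, while your computation gives the identification up to a nonzero constant), which is immaterial since orthogonality and the difference equation are unaffected by rescaling the measure.
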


\begin{proof}
It is easy to see that
$$
\rho _{a,c}^{F_1,F_2}=(1-a)^{c-\tilde c}\tilde \rho_{a,\tilde c}^{\tilde F_1,F_2,h}(x-f_{2,M}-1).
$$
The corollary is then a straightforward consequence of the previous Theorem.
\end{proof}

\section{Krawtchouk polynomials}\label{skr}
For $a\not=0,-1$, we write $(k_{n}^{a,N})_n$ for the sequence of  Krawtchouk polynomials defined by
\begin{equation}\label{defkrp}
k_{n}^{a,N}(x)=\frac{1}{n!}\sum_{j=0}^n(-1)^{n+j}\frac{(1+a)^{j-n}}{a^{j-n}}\frac{(-n)_j(-x)_j(N-n)_{n-j}}{j!}.
\end{equation}
Krawtchouk polynomials are eigenfunctions of the following second order difference operator ($n\ge 0$)
\begin{equation*}\label{sodekr}
D_{a,N} =\frac{-x\Sh_{-1}+(x-a(x-N+1))\Sh_0+a(x-N+1)\Sh_1}{1+a},\qquad D_{a,N} (k_{n}^{a,N})=nk_{n}^{a,N}.
\end{equation*}
For $a\not=0,-1$ and $N\not=1, 2, \ldots$, they are always orthogonal with respect to a moment functional $\rho _{a,N}$,
which we normalize by taking
$\langle \rho_{a,N},1\rangle=1$.
When $N$ is a positive integer and $a>0$, the first $N$ polynomials are orthogonal with respect to the positive Krawtchouk measure
\begin{equation*}\label{krw}
\rho_{a,N}=\frac{\Gamma(N)}{(1+a)^{N-1}}\sum _{x=0}^{N-1} \frac{a^{x}}{\Gamma(N-x)x!}\delta _x.
\end{equation*}
The structural formulas for $(k_{n}^{a,N})_n$ can be found in \cite{NSU}, pp. 30-53 (see
also \cite{KLS}, pp, 204-211).

The Krawtchouk case can be formally derived from the Meixner case taking into account that $k_n^{a,N}(x)=m_n^{-a,-N+1}(x)$. Anyway, since the corresponding proofs can be produced in a similar way to what we have already done, we omit the proofs and only display the results in this Section.

Consider now the algebra of operators $\A$ defined by \eqref{algdiffd}. Lemma 6.1 of \cite{du1} provides two $\D$-operators for Krawtchouk polynomials:
For $a\neq0,-1$, the operators $\mathcal{D}_1$ and $\mathcal{D}_2$ defined by \eqref{defTo} from the sequences $\varepsilon_n=1/(1+a)$ and $\tilde \varepsilon_n=-a/(1+a), n\geq0$, respectively, are $\mathcal{D}$-operators for the  Krawtchouk polynomials $(k_n^{a,N})_n$ \eqref{defkrp} and the algebra $\A$. More precisely
$$
\mathcal{D}_1=\frac{1}{1+a}\nabla,\quad \mathcal{D}_2=\frac{-a}{1+a}\Delta.
$$

\begin{theorem}\label{thkr} Let $F_1$ and $F_2$ be two finite sets of positive integers (the empty set is allowed, in which case we take $\max F=-1$). For $h\ge 1$, consider the transformed sets $I(F_1)=K=\{ k_{1},\ldots, k_{m_2}\}$ and $J_h(F_2)=H=\{ h_{1},\ldots, h_{m_1}\}$, where the involution $I$ and the transform $J_h$ are defined by (\ref{dinv}) and (\ref{dinv2}), respectively. Define $m=m_1+m_2$. Let $a$ and $N$ be real numbers satisfying $a\not =0,-1$; if $N$ is a positive integer, we also assume that $f_{1,M},f_{2,M}<N/2$ (so that $F_1\cap \{ N-1-f,f\in F_2\}=\emptyset$),
where $f_{i,M}=\max F_i$. Consider the Krawtchouk polynomials $(k_n^{a,N})_n$ (\ref{defkrp}). Assume that $\Omega_{a,N}^{K,H} (n)\not =0$ for $n\ge 0$ where the $m\times m$ Casorati determinant $\Omega _{a,N}^{K,H}$ is defined by
$$
\Omega_{a,N}^{K,H} (n)=\begin{vmatrix}
k^{a,-N}_{k_{1}}(-n) & \cdots &
k^{a,-N}_{k_{1}}(-n+m-1) \\
              \vdots & \ddots & \vdots \\
               \displaystyle
               k^{a,-N}_{k_{m_1}}(-n) & \cdots &k^{a,-N}_{k_{m_1}}(-n+m-1)
               \\ \vspace{-.3cm} \\
(-a)^{m-1}k^{1/a,-N}_{h_{1}}(-n) & \cdots &
k^{1/a,-N}_{h_{1}}(-n+m-1) \\
               \vdots & \ddots & \vdots \\
(-a)^{m-1}k^{1/a,-N}_{h_{m_2}}(-n)
               & \cdots &k^{1/a,-N}_{h_{m_2}}(-n+m-1)
             \end{vmatrix}.
$$
We then define the sequence of polynomials $(q_n)_n$ by
$$
q_n(x)=\begin{vmatrix}
(1+a)^m k^{a,N}_n(x) & -(1+a)^{m-1}k^{a,N}_{n-1}(x) & \cdots & (-1)^mk^{a,N}_{n-m}(x) \\
k^{a,-N}_{k_{1}}(-n-1) & k^{a,-N}_{k_{1}}(-n) & \cdots &
k^{a,-N}_{k_{1}}(-n+m-1) \\
               \vdots & \vdots & \ddots & \vdots \\
               k^{a,-N}_{k_{m_1}}(-n-1) & k^{a,-N}_{k_{m_1}}(-n) & \cdots &k^{a,-N}_{k_{m_1}}(-n+m-1)
               \\
(-a)^m k^{1/a,-N}_{h_{1}}(-n-1) & (-a)^{m-1}k^{1/a,-N}_{h_{1}}(-n) & \cdots &
k^{1/a,-N}_{h_{1}}(-n+m-1) \\
               \vdots & \vdots & \ddots & \vdots \\
(-a)^m k^{1/a,-N}_{h_{m_2}}(-n-1)  & (-a)^{m-1}k^{1/a,-N}_{h_{m_2}}(-n)
               & \cdots &k^{1/a,-N}_{h_{m_2}}(-n+m-1)
             \end{vmatrix}.
$$
Then

\noindent
(1) If $N$ is not a positive integer, then the polynomials $(q_n)_n$ are orthogonal with respect to the measure
$$
\tilde \rho^{F_1,F_2,h}_{a,N}=\prod_{f\in F_1}(x+f_{1,M}+1-f)\prod_{f\in F_2}(N-x-1+f)\rho_{a,N+f_{1,M}+f_{2,M}+h+1}(x+f_{1,M}+1).
$$
If $N$ is a positive integer, then the polynomials $(q_n)_{0\le n\le N-1}$ are orthogonal with respect to the same measure.

\noindent
(2) The polynomials $(q_n)_n$ are eigenfunctions of a higher order difference operator of the form (\ref{doho}) with
$$
-s=r=\sum _{f\in F_1}f-\sum _{f\in F_2}f-\frac{n_{F_1}(n_{F_1}-1)}{2}-\frac{n_{F_2}(n_{F_2}-1)}{2}+n_{F_2}(f_{2,M}+h)+1
$$
(which can be explicitly constructed using Theorem \ref{Teor1}).
\end{theorem}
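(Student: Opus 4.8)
The plan is to mirror the proofs of Theorem~\ref{mthch} and its Meixner analogue, exploiting the reparametrization $k_n^{a,N}(x)=m_n^{-a,-N+1}(x)$ to transport every ingredient from the Meixner section under the substitution $a\mapsto-a$, $c\mapsto 1-N$. First I would record the data needed to invoke the general machinery of Theorem~\ref{Teor1} and Lemma~\ref{lort2}. The two $\D$-operators are $\D_1=\frac{1}{1+a}\nabla$ and $\D_2=\frac{-a}{1+a}\Delta$, with constant sequences $\varepsilon_n=1/(1+a)$ and $\tilde\varepsilon_n=-a/(1+a)$, so the auxiliary factors are $\xi_{x,i}^h=(1+a)^{-i}$ on the first block of rows and $\xi_{x,i}^h=(-a/(1+a))^{i}$ on the second. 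The analogue of Lemma~\ref{lemRme} is that the chosen polynomials $R_j^h(x)=k_j^{a,-N}(-x-1)$ and $k_j^{1/a,-N}(-x-1)$ satisfy the recurrence~(\ref{relaR0}); this follows by setting $x=-n-1$ in the second order difference equation for the corresponding Krawtchouk family, exactly as in Lemma~\ref{lemRch}, and fixes the parameters $\eta_h,\kappa_h$ and hence the shifted nodes $\tilde g_h$ entering $\p_{\tilde G}$.

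For part~(1) with $N$ not a positive integer, I would establish the three master identities~(\ref{foeq1})--(\ref{foeq3}) for the weight $\tilde\rho^{F_1,F_2,h}_{a,N}$ and then conclude by Lemma~\ref{lort2}. Following the Charlier template, I would prove these by a two-step argument: a reduction step that peels one Christoffel factor at a time, rewriting the weight as a linear factor times the weight of a smaller pair of sets and pushing the sums through via the node identities attached to $I$ and $J_h$, thereby reducing to singletons $F_1,F_2$; and a base step for singletons, in which the Krawtchouk duality together with the residue Lemma~(\ref{sumuif}) collapses the sums. Equivalently, each identity can be transported verbatim from its Meixner counterpart under $a\mapsto-a$, $c\mapsto 1-N$, which is the most economical route. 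When $N$ is a positive integer the measure is supported on $\{0,\dots,N-1\}$ and only $(q_n)_{0\le n\le N-1}$ are defined; here the hypothesis $f_{1,M},f_{2,M}<N/2$ guarantees $F_1\cap\{N-1-f:f\in F_2\}=\emptyset$, so the two factors are genuinely distinct and the truncated weight is well posed, and orthogonality of the first $N$ polynomials follows by transporting the three identities to integer $N$ by continuity in $N$ and rerunning the determinant computation of Lemma~\ref{lort2} on that range.

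For part~(2), I would take $S=1$ in Theorem~\ref{Teor1}, so that $P_S(x)-P_S(x-1)=\Omega_G(x)$ and the order of $D_{q,S}$ equals $\deg P_S=\deg\Omega_G+1$. Lemma~\ref{lgp1}, applied with the two blocks of sizes $m_1,m_2$ and with $w$ the ratio of the two $\varepsilon$'s, yields $\deg\Omega_G=\sum_{g\in G}g-\binom{m_1}{2}-\binom{m_2}{2}$, while Lemma~\ref{lgp2} shows that each $M_h$ in~(\ref{Dq}) has degree strictly below the matching threshold, so the outermost shifts $\Sh_{\pm r}$ receive a nonzero contribution only from $P_S(D_{a,N})$ and their polynomial coefficients have the expected degree $r$. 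It then remains to rewrite $\sum_{g\in G}g=\sum_{g\in I(F_1)}g+\sum_{g\in J_h(F_2)}g$ as the stated value of $r$, using the cardinality and sum identities for $I$ and $J_h$ recorded after~(\ref{dinv2}).

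The main obstacle is twofold. First, carrying out~(\ref{foeq1})--(\ref{foeq3}) honestly in the genuinely two-operator regime is harder than in the Charlier case: the $\xi$-factors no longer trivialize, and the duality bookkeeping must be tracked simultaneously for the two distinct Krawtchouk families $k^{a,-N}$ and $k^{1/a,-N}$. Second, the integer-$N$ truncation needs care, since Lemma~\ref{lort2} is stated for an infinite orthogonal sequence; one must justify restricting to $0\le n\le N-1$ and ensure that the finitely supported weight, built from the two disjoint Christoffel factors, still produces nondegenerate norms on that range.
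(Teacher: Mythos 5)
Your proposal matches the paper's intended argument: the paper in fact omits the proof of this theorem, stating only that the Krawtchouk case ``can be formally derived from the Meixner case taking into account that $k_n^{a,N}(x)=m_n^{-a,-N+1}(x)$'' and that the proofs go through as in the Charlier and Meixner sections, which is precisely the route you take (transport under $a\mapsto -a$, $c\mapsto 1-N$, the recurrence analogue of Lemma \ref{lemRme}, the three master identities fed into Lemma \ref{lort2}, and $S=1$ in Theorem \ref{Teor1} with Lemmas \ref{lgp1} and \ref{lgp2} giving the order $r$). Your additional attention to the integer-$N$ truncation and to the role of $F_1\cap\{N-1-f,\,f\in F_2\}=\emptyset$ is consistent with the paper's own remarks following Corollary \ref{jodkr}.
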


\begin{corollary}\label{jodkr}
Let $F_1$ and $F_2$ be two finite sets of positive integers (the empty set is allowed). We assume that $a\not =0,-1$,
and if $N$ is a positive integer, we assume in addition that $f_{1,M},f_{2,M}<N/2$ (where $f_{i,M}=\max F_i$).
Consider the weight $\rho _{a,N}^{F_1,F_2}$ defined by
\begin{equation}\label{defmkre}
\rho _{a,N}^{F_1,F_2}=\prod_{f\in F_1}(x-f)\prod_{f\in F_2}(N-1-f-x)\rho_{a,N},
\end{equation}
where $\rho_{a,N}$ is the Krawtchouk weight. Define $\tilde F_2=\{f_{2,M}+1-f,f\in F_2\}$, $h=\min F_2$ and $\tilde N=N-f_{1,M}-f_{2,M}-2$ and assume that
$$
\Omega_{a,\tilde N}^{I(F_1),J_{h}(\tilde F_2)}(n)\not =0, \quad n\ge 0.
$$
Then the measure $\rho _{a,N}^{F_1,F_2}$ has associated a sequence of orthogonal polynomials and Conjecture C is true.
\end{corollary}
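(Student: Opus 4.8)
The plan is to deduce this corollary from Theorem \ref{thkr} in exactly the way Corollary \ref{jodme} was deduced from the main Meixner theorem: the entire content reduces to recognizing the Christoffel-transformed Krawtchouk weight $\rho_{a,N}^{F_1,F_2}$ of (\ref{defmkre}) as a shift (and nonzero rescaling) of the measure $\tilde\rho^{F_1,F_2,h}_{a,N}$ produced by the theorem. Concretely, I would first establish the identity
$$
\rho_{a,N}^{F_1,F_2}=C\,\tilde\rho_{a,\tilde N}^{F_1,\tilde F_2,h}(x-f_{1,M}-1),
$$
where $\tilde F_2=\{f_{2,M}+1-f:f\in F_2\}$, $h=\min F_2$ and $\tilde N=N-f_{1,M}-f_{2,M}-2$, the constant $C\neq0$ being a power of $a$ coming from the $a^x$ factor in the Krawtchouk weight under the shift.

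This identity is checked by direct substitution in the explicit expression for $\tilde\rho^{F_1,\tilde F_2,h}_{a,\tilde N}$ given in Theorem \ref{thkr}(1). Replacing $x$ by $x-f_{1,M}-1$ turns $\prod_{f\in F_1}(x+f_{1,M}+1-f)$ into $\prod_{f\in F_1}(x-f)$ and $\rho_{a,N}(x+f_{1,M}+1)$ into $\rho_{a,N}$; the remaining factor $\prod_{f\in\tilde F_2}(\tilde N-x-1+f)$ becomes, after the index change $f=f_{2,M}+1-g$ with $g\in F_2$ together with $\tilde N=N-f_{1,M}-f_{2,M}-2$, exactly $\prod_{g\in F_2}(N-1-g-x)$. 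One also confirms that the base-measure parameter matches, i.e. $\tilde N+f_{1,M}+\max\tilde F_2+h+1=N$, which is immediate from $\max\tilde F_2=f_{2,M}+1-h$. The hypothesis $f_{1,M},f_{2,M}<N/2$ guarantees $F_1\cap\{N-1-f:f\in F_2\}=\emptyset$, so the multiplying polynomial has simple roots and is precisely the one required by Theorem \ref{thkr}, while the assumed nonvanishing of $\Omega_{a,\tilde N}^{I(F_1),J_h(\tilde F_2)}(n)$, $n\ge0$, is the Casorati hypothesis of the theorem for the transformed data.

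With the identity in hand, part (1) follows because orthogonality is invariant under a shift of the variable and multiplication by a nonzero constant (see the Preliminaries): the orthogonal polynomials for $\rho_{a,N}^{F_1,F_2}$ are those of Theorem \ref{thkr}(1) translated by $x-f_{1,M}-1$. When $N$ is a positive integer the base Krawtchouk measure is supported on $N$ points, so one only obtains the finite segment $(q_n)_{0\le n\le N-1}$, in accordance with the theorem. For part (2), a shift in $x$ conjugates a difference operator of the form (\ref{doho}) into another of the same genre $(s,r)$, so the polynomials remain eigenfunctions of a difference operator with the same $-s=r$ furnished by Theorem \ref{thkr}(2); it then remains to simplify that value for the transformed sets and match Conjecture C.

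The only genuine bookkeeping obstacle is this last simplification of the order. Substituting $n_{\tilde F_2}=n_{F_2}$, $\sum_{f\in\tilde F_2}f=n_{F_2}(f_{2,M}+1)-\sum_{f\in F_2}f$ and $\max\tilde F_2+h=f_{2,M}+1$ into the theorem's formula
$$
r=\sum_{f\in F_1}f-\sum_{f\in\tilde F_2}f-\frac{n_{F_1}(n_{F_1}-1)}{2}-\frac{n_{F_2}(n_{F_2}-1)}{2}+n_{\tilde F_2}(\max\tilde F_2+h)+1,
$$
the terms $\pm n_{F_2}(f_{2,M}+1)$ cancel and one is left with $r=\sum_{f\in F_1}f+\sum_{f\in F_2}f-\frac{n_{F_1}(n_{F_1}-1)}{2}-\frac{n_{F_2}(n_{F_2}-1)}{2}+1=r_{F_1}+r_{F_2}-1$, which is exactly the order predicted by Conjecture C. This completes the reduction.
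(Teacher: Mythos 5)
Your proposal is correct and takes essentially the same route the paper intends: the paper omits the proof of Corollary \ref{jodkr}, but its proof of the Meixner analogue (Corollary \ref{jodme}) is exactly this one-line reduction---verify that $\rho_{a,N}^{F_1,F_2}$ is the shift by $f_{1,M}+1$ of $\tilde\rho_{a,\tilde N}^{F_1,\tilde F_2,h}$ and invoke Theorem \ref{thkr}, with the same bookkeeping ($\max\tilde F_2+h=f_{2,M}+1$, $\tilde N+f_{1,M}+\max\tilde F_2+h+1=N$, and the cancellation giving $r=r_{F_1}+r_{F_2}-1$) that you carry out. The only trivial discrepancy is the constant $C$: with the paper's definition of a shifted moment functional it comes out as $C=1$ rather than a power of $a$, but since multiplication of a measure by a nonzero constant does not affect orthogonality this is immaterial.
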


Notice that when $N$ is a positive integer, there are different sets $F_1$ and $F_2$ for which the measures
(\ref{defmkre}) are equal, but only one of these possibilities satisfies the assumption $f_{1,M},f_{2,M}<N/2$. Actually,
we only need the weaker assumption $F_1\cap \{ N-1-f,f\in F_2\}=\emptyset$ to prove both Theorem \ref{thkr} and Corollary \ref{jodkr}. However, the stronger assumption $f_{1,M},f_{2,M}<N/2$ is better because, in addition, it minimizes the order $2r$ of the difference operator with respect to which the polynomials $(q_n)_n$ are eigenfunctions.
This fact will be clear with an example. Take $N=100$ and the measure $\mu=(x-1)(x-5)(x-68)\rho_{a,N}$. There are eight couples of different sets $F_1$ and $F_2$ for which the measures $\mu$ and $\tilde \rho _{a,N}^{F_1,F_2}$ (\ref{defmkre}) coincide (except for a sign). They are the following
\begin{align*}
F_1&=\{1,5,68\}, F_2=\emptyset, \quad &F_1&=\emptyset, F_2=\{31,94,98\},\\
F_1&=\{1,5\}, F_2=\{31\},\quad &F_1&=\{1,68\}, F_2=\{94\},\quad &F_1&=\{5,68\}, F_2=\{98\},\\
F_1&=\{1\}, F_2=\{31,94\},\quad &F_1&=\{5\}, F_2=\{31,98\},\quad &F_1&=\{68\}, F_2=\{94,98\}.
\end{align*}
Only one of these couples satisfies the assumption $f_{1,M},f_{2,M}<N/2$: $F_1=\{1,5\}$, $F_2=\{31\}$. Actually, it is easy to check that this couple
minimizes the number
$$
r=\sum_{f\in F_1}f+\sum_{f\in F_2}f-\frac{n_{F_1}(n_{F_1}-1)}{2}-\frac{n_{F_2}(n_{F_2}-1)}{2}+1.
$$
Hence, it also minimizes de order $2r$ of the difference operator with respect to which the polynomials $(q_n)_n$ are eigenfunctions.

\section{Proofs of the results in Section 4}\label{sproofs}
In this Section, we include the proofs of Theorem \ref{Teor1}, Lemma \ref{lgp1} and Lemma \ref{lgp2} in Section 4.

\begin{proof} of Theorem \ref{Teor1}.

First, for  $j=1,2,\ldots,m,$ and $i\in\mathbb{Z}$, we introduce the determinants $\Omega_{i,j}(n)$, $n\ge 0$, by the $m\times m$ determinant defined in the same way as $\Omega (n-i)$ but replacing the $j$-th column of $\Omega (n-i)$ by the column vector
$$
\left(\xi_{n-1,m+i-1}^1R_1(n-1),\ldots ,\xi_{n-1,m+i-1}^mR_m(n-1)\right)^t.
$$
Observe that $\Omega_{0,1}(n)=\Omega(n)$.

\medskip
We now write the polynomials $(q_n)_n$ in a more convenient way. Indeed
let $\phi_{n,j}, j=1,\ldots,m,$ be the (unique) solutions of the following system of equations
\begin{equation}\label{SyEq}
  \sum_{j=1}^m\xi_{n-j,m-j}^iR_i(n-j)\phi_{n,j}=\xi_{n,m}^iR_i(n),\quad i=1,2,\ldots,m.
\end{equation}
They can be written as
$$
  \phi_{n,j}=\frac{\Omega_{1,j}(n+1)}{\Omega (n)},\quad j=1,2,\ldots,m.
$$
For $j=1,2,\ldots,m$, define the sequences of numbers $\beta_{n,j}$  by
\begin{equation}\label{betas}
\beta_{n,j}=\begin{cases}(-1)^{j+1}\phi_{n,j}, & n\geq j.\\
0, & n< j,\end{cases}
\end{equation}
It is then clear that
\begin{equation}\label{qus2}
q_n=\Omega(n)\left(p_n+\sum_{j=1}^m\beta_{n,j}p_{n-j}\right).
\end{equation}

\bigskip

The definition of \eqref{Dq} gives
\begin{align*}
D_{q,S}(p_n)=&\lambda_np_n+\sum_{h=1}^m\sum_{i=1}^nM_h(n-i)(-1)^{i+1}\varepsilon_{n}^h\cdots\varepsilon_{n-i+1}^hR_h(n)p_{n-i}\\
=&\lambda_np_n+\lambda_{n,1}p_{n-1}-\lambda_{n,2}p_{n-2}+\cdots+(-1)^{m+1}\lambda_{n,m}p_{n-m}+\sum_{i=m+1}^n(-1)^{i+1}\lambda_{n,i}p_{n-i}
\end{align*}
where
\begin{equation}\label{lambj}
 \lambda_{n}=P_S(n),\quad  \lambda_{n,i}=\sum_{h=1}^m\varepsilon_{n}^h\cdots\varepsilon_{n-i+1}^hM_h(n-i)R_h(n),\quad i=1,2,\ldots,n.
\end{equation}

Taking into account the identity (\ref{qus2}) for the polynomials $(q_n)_n$ we get
\begin{align*}
D_{q,S}\left(\frac{q_n}{\Omega(n)}\right) =&D_{q,S}(p_n)+\beta_{n,1}D_{q,S}(p_{n-1})+\cdots+\beta_{n,m}D_{q,S}(p_{n-m})\\
=&\lambda_np_n+(\lambda_{n,1}+\beta_{n,1}\lambda_{n-1})p_{n-1}+(-\lambda_{n,2}+\beta_{n,1}\lambda_{n-1,1}+\beta_{n,2}\lambda_{n-2})p_{n-2}\\
&+\left(\lambda_{n,3}-\beta_{n,1}\lambda_{n-1,2}+\beta_{n,2}\lambda_{n-2,1}+\beta_{n,3}\lambda_{n-3}\right)p_{n-3}+\cdots\\
&+\left((-1)^{m+1}\lambda_{n,m}+(-1)^m\beta_{n,1}\lambda_{n-1,m-1}+\cdots+\beta_{n,m}\lambda_{n-m}\right)p_{n-m}\\
&+\sum_{i=m+1}^n\left((-1)^{i+1}\lambda_{n,i}+(-1)^{i}\beta_{n,1}\lambda_{n-1,i-1}+\cdots+\beta_{n,m}\lambda_{n-m,i-m}\right)p_{n-i}.
\end{align*}
We must have the identities
\begin{align*}
(-1)^{i+1}\lambda_{n,i}+(-1)^{i}\beta_{n,1}\lambda_{n-1,i-1}+\cdots+\beta_{n,m}\lambda_{n-m,i-m}=&\lambda_n\beta_{n,i},\quad i=1,2,\ldots,m,\\
(-1)^{i+1}\lambda_{n,i}+(-1)^{i}\beta_{n,1}\lambda_{n-1,i-1}+\cdots+\beta_{n,m}\lambda_{n-m,i-m}=&0,\qquad i=m+1,\ldots,n.&
\end{align*}
Using the definition of $\beta_{n,i}$ in \eqref{betas} the previous identities are equivalent to
\begin{align}\label{ident}
\nonumber\lambda_{n,i}&=\lambda_{n-1,i-1}\phi_{n,1}+\lambda_{n-2,i-2}\phi_{n,2}+\cdots+\lambda_{n-i+1,1}\phi_{n,i-1}+(\lambda_n-\lambda_{n-i})\phi_{n,i},\quad i=1,2,\ldots,m,\\
\lambda_{n,i}&=\lambda_{n-1,i-1}\phi_{n,1}+\lambda_{n-2,i-2}\phi_{n,2}+\cdots+\lambda_{n-m,i-m}\phi_{n,m},\quad i=m+1,\ldots,n.
\end{align}

Before checking the identities \eqref{ident}, we will assume the following claims, which will be proved at the end.

\textsc{Claim I}: The coefficients $\lambda_{n,i}$ defined by \eqref{lambj} can be written as
\begin{equation}\label{claim1}
   \lambda_{n,i}=\sum_{h=1}^{m}S(n-i+h)\Omega_{i-h+1,h}(n+1),\quad i=1,2,\ldots,n.
\end{equation}

\textsc{Claim II}: We have that
\begin{equation}\label{claim2}
   \lambda_n-\lambda_{n-i}=\sum_{h=1}^iS(n-h+1)\Omega(n-h+1)=\sum_{h=1}^iS(n-i+h)\Omega_{-h+1,h}(n-i+1),\quad i=1,2,\ldots,m.
\end{equation}

\textsc{Claim III}: We have the following $m$ formulas
\begin{equation}\label{claim3}
 \sum_{h=1}^{m}\Omega_{l-h,j}(n-h+1)\phi_{n,h}=\Omega_{l,j}(n+1),\quad j=1,2,\ldots,m,\quad l=1,2,\ldots,n.
\end{equation}

\textsc{Claim IV}: We have that
\begin{equation}\label{claim4}
 \Omega_{1-h,j}(n)=0,\quad h\in\{1,2,\ldots,m\}\setminus\{j\}.
\end{equation}

\bigskip

The first identity in \eqref{ident} follows using \eqref{claim1} and \eqref{claim2} since, for $i=1,2,\ldots,m,$ we have that
\begin{align*}
&\lambda_{n-1,i-1}\phi_{n,1}+\lambda_{n-2,i-2}\phi_{n,2}+\cdots+\lambda_{n-i+1,1}\phi_{n,i-1}+(\lambda_n-\lambda_{n-i})\phi_{n,i}\\
&=\left(\sum_{h=1}^{m}S(n-i+h)\Omega_{i-h,h}(n)\right)\phi_{n,1}+\left(\sum_{h=1}^{m}S(n-i+h)\Omega_{i-h-1,h}(n-1)\right)\phi_{n,2}+\cdots+\\
&\qquad\quad +\left(\sum_{h=1}^mS(n-i+h)\Omega_{2-h,h}(n-i+2)\right)\phi_{n,i-1} +\left(\sum_{h=1}^iS(n-i-h)\Omega_{-h+1,h}(n-i+1)\right)\phi_{n,i}.
\end{align*}
The previous identity can be rewritten, using \eqref{claim4}, as
\begin{align*}
&S(n-i+1)\sum_{h=1}^m\Omega_{i-h,1}(n-h+1)\phi_{n,h}+S(n-i+2)\sum_{h=1}^m\Omega_{i-h-1,2}(n-h+1)\phi_{n,h}+\cdots+\\
&\qquad\quad +S(n)\sum_{h=1}^m\Omega_{-h+1,i}(n-h+1)\phi_{n,h}.
\end{align*}
Finally, we use \eqref{claim3} for $j=1,l=i$ in the first term of the previous expression, \eqref{claim3} for $j=2,l=i-1$ in the second, and so on, until the last one, which we use \eqref{claim3} for $j=i,l=1$. Therefore we obtain, using again \eqref{claim1},
\begin{equation*}
S(n-i+1)\Omega_{i,1}(n+1)+S(n-i+2)\Omega_{i-1,2}(n+1)+\cdots+S(n)\Omega_{1,i}(n+1)=\lambda_{n,i}.
\end{equation*}

The second identity in \eqref{ident} is very similar but now $i=m+1,\ldots,n$. Using \eqref{claim1} we get
\begin{align*}
&\lambda_{n-1,i-1}\phi_{n,1}+\lambda_{n-2,i-2}\phi_{n,2}+\cdots+\lambda_{n-m,i-m}\phi_{n,m}\\
&=\left(\sum_{h=1}^{m}S(n-i+h)\Omega_{i-h,h}(n)\right)\phi_{n,1}+\left(\sum_{h=1}^{m}S(n-i+h)\Omega_{i-h-1,h}(n-1)\right)\phi_{n,2}+\\
&\qquad\quad +\cdots+\left(\sum_{h=1}^{m}S(n-i+h)\Omega_{i-h-m+1,h}(n-m+1)\right)\phi_{n,m}.
\end{align*}
Again, rewriting everything as before and using \eqref{claim3} for $j=1,l=i$ in the first term, \eqref{claim3} for $j=2,l=i-1$ in the second, and so on, until the last one, which we use \eqref{claim3} for $j=m,l=i-m+1$, we get, using \eqref{claim1}, $\lambda_{n,i}$.

\medskip

Finally we will prove the claims. Claim IV follows from the definition of $\Omega_{i,j}(n)$ since for fixed $j$ and $n$, for $h\neq j$, we observe that there will always be two repeated columns, so the determinant is zero.

For the Claim III, let us write first $\Omega_{i,j}(n)$ as the expansion of the determinant with respect to the $j$-th column
\begin{equation}\label{ExpDnij}
  \Omega_{i,j}(n)=\xi_{n-1,m+i-1}^1R_1(n-1)C_{1,j}(n-i)+\cdots+\xi_{n-1,m+i-1}^mR_m(n-1)C_{m,j}(n-i),
\end{equation}
where $C_{h,j}(n-i), h=1,2,\ldots,m,$ are the $(h,j)$-cofactors of the matrix given by  $\Omega_{i,j}(n)$. Observe that, by definition, for $h,j=1,2,\ldots,m,$
\begin{equation}\label{cofac}
C_{h,j}(n-i)=(-1)^{h+j}\det\left(\xi_{n-i-r,m-r}^lR_l(n-i-r)\right)_{\scriptsize \left\{\begin{array}{l}
                                                           l\in\{1,2,\ldots,m\}\setminus\{h\} \\
                                                            r\in\{1,2,\ldots,m\}\setminus\{j\}
                                                          \end{array}\right\}}.
\end{equation}
Expanding $\Omega_{l-h,j}(n-h+1)$ as in \eqref{ExpDnij} we obtain, for $h=1,2,\ldots,m$,
\begin{align*}
\Omega_{l-h,j}(n-h+1)&=\xi_{n-h,m+l-h-1}^1R_1(n-h)C_{1,j}(n-l+1)\\
&\qquad\qquad +\cdots+\xi_{n-h,m+l-h-1}^mR_m(n-h)C_{m,j}(n-l+1),
\end{align*}
where the cofactors  $C_{1,j}(n-l+1),\ldots,C_{m,j}(n-l+1)$ are independent of $h$. Multiplying the first equation in \eqref{SyEq} by $\xi_{n-m,l-1}^1C_{1,j}(n-l+1)$, the second equation in \eqref{SyEq} by $\xi_{n-m,l-1}^2C_{2,j}(n-l+1)$, and so on, and summing up these last $m$ equations we obtain \eqref{claim3}, using again \eqref{ExpDnij}.

Claim II is a consequence of \eqref{Pgs}, \eqref{lambj} and the fact that for a fixed index $r$ the sequences $\Omega_{1-h,h}(n-r-h)$ are all equal for $h=1,2,\ldots,m$, which follows straightforwardly from the definition of $\Omega_{i,j}(n)$.

Finally, let us proof Claim I. First, we observe from the definition of \eqref{emeiexp} that

\begin{align*}
M_h(n-i)=&\sum_{j=1}^m(-1)^{h+j}S(n-i+j)\xi_{n-i,m-j}^h\times\\
&\quad\times\det\left(\xi_{n-i-r,m-j-r}^lR_l(n-i-r)\right)_{\scriptsize \left\{\begin{array}{l}
                                                           l\in\{1,2,\ldots,m\}\setminus\{h\} \\
                                                            r\in\{-j+1,-j+2,\ldots,m-j\}\setminus\{0\}
                                                          \end{array}\right\}}\\
=&\sum_{j=1}^m(-1)^{h+j}S(n-i+j)\xi_{n-i,m-j}^h\times\\
&\quad\times\det\left(\xi_{n-i+j-r,m-r}^lR_l(n-i+j-r)\right)_{\scriptsize \left\{\begin{array}{l}
                                                           l\in\{1,2,\ldots,m\}\setminus\{h\} \\
                                                            r\in\{1,2,\ldots,m\}\setminus\{j\}
                                                          \end{array}\right\}}\\
=&\sum_{j=1}^mS(n-i+j)\xi_{n-i,m-j}^hC_{h,j}(n-i+j).
\end{align*}
The last step is a consequence of the definition of a cofactor matrix given by \eqref{cofac}.

Now, using the previous considerations in \eqref{lambj} we get
\begin{align*}
\lambda_{n,i}&=\sum_{h=1}^m\varepsilon_{n}^h\cdots\varepsilon_{n-i+1}^hM_h(n-i)R_h(n)=\sum_{h=1}^m\xi_{n,i}^hM_h(n-i)R_h(n)\\
&=\sum_{h=1}^m\xi_{n,i}^h\left(\sum_{j=1}^mS(n-i+j)\xi_{n-i,m-j}^hC_{h,j}(n-i+j)\right)R_h(n)\\
&=\sum_{j=1}^mS(n-i+j)\left(\sum_{h=1}^m\xi_{n,m-j+i}^hR_h(n)C_{h,j}(n-i+j)\right)\\
&=\sum_{j=1}^{m}S(n-i+j)\Omega_{i-j+1,j}(n+1).
\end{align*}
The simplification of the coefficients $\xi_{n,i}^h$ follows using the definition \eqref{defxi}. The last step is a consequence of \eqref{ExpDnij}. Therefore we get \eqref{claim1}.

\hfill \end{proof}

\bigskip

\begin{proof} of Lemma \ref{lgp1}.

To simplify the notation, we write $\mu_i =\deg R_i$ and  $\phi_{\mu_1,\ldots , \mu _m}$ for the special case of the determinant $\det  (S_{i,j})_{i,j=1}^m$ when $R_i(x)=x^{\mu _i}$. Without loosing generality, we can also assume that $l_i<l_j$, $1\le i<j\le m$.
We proceed in three steps.

\bigskip

\noindent\textbf{First step.} \textit{Assume that the Lemma holds for the case when $R_i(x)$ are powers of $x$. Then it also holds
for any polynomials $R_i$.}
For arbitrary polynomials $R_i$ with leading coefficient equal to $\alpha _i$, we can write
$$
\det (S_{i,j})_{i,j=1}^m=\left(\prod_{i=1}^m\alpha_i\right)\phi_{\mu_1,\ldots , \mu _m}+\sum a_{\nu_1,\ldots , \nu _m}\phi_{\nu_1,\ldots , \nu _m},
$$
where the sum is taken over all $(\nu_1,\ldots , \nu _m)$ satisfying that $0\le \nu_i\le \mu_i$, $i=1,\ldots ,m$, and
$\nu_{i_0}< \mu_{i_0}$ at least for some  $i_0$, $1\le i_0\le m$. The step now follows easily.

\bigskip

\noindent\textbf{Second step} \textit{The Lemma holds for the case when $R_i(x)$ are any powers of $x$ and $m_1=0$ or $m_2=0$.}
It is enough to prove it for $m_2=0$, that is, $m_1=m$. By interchanging rows in the determinant,
we can assume that the degrees $\mu_i$, $i=1,\ldots , m$, are decreasing. Since they are different, we get that $\mu_j\ge m-j$.

We use Schur functions $s_\lambda(x_1,\ldots , x_m)$. Following the notation of \cite{Macd}, we write
$$
s_\lambda (x_1,\ldots ,x_m)=\frac{\det (x_i^{\lambda _j+m-j})_{1\le i,j\le m}}{\prod_{1\le i<j\le m}(x_i-x_j)},
$$
where $\lambda =(\lambda _1,\ldots , \lambda _m)$ is a partition of nonnegative integers in decreasing order: $\lambda_j\ge \lambda _{j+1}$. As usual we denote the length of the partition $\lambda$ by $\vert \lambda \vert =\lambda_1+\cdots +\lambda_m$.
Hence, by writing $\lambda_j=\mu_j-m+j$ and $x_i=x-l_i$, we get
\begin{equation}\label{md1}
\det (R_{j}(x-l_i))_{i,j=1}^m=s_\lambda (x_1,\ldots , x_m)\prod_{1\le i<j\le m}(x_i-x_j).
\end{equation}
We have to consider the complete symmetric functions $h_r$, $r\ge 0$, defined by
$$
h_r(x_1,\ldots , x_m)=\sum _{\vert\tau \vert=r}m_\tau (x_1,\ldots , x_m),
$$
where $m_\tau =\sum _{\alpha=(\alpha_1,\ldots ,\alpha_m)} x_1^{\alpha_1}\cdots x_m^{\alpha_m}$, summed over all distinct permutations $\alpha$ of
$\tau = (\tau_1,\ldots , \tau_m)$. Using (3.4), p. 41 of \cite{Macd}, we have
\begin{equation}\label{md2}
s_\lambda (x_1,\ldots , x_m)=\det(h_{\mu_i +j-m}(x_1,\ldots , x_m))_{1\le i,j\le m}.
\end{equation}
Since $x_i=x-l_i$, it is easy to see that $h_{\mu_i+j-m}(x_1,\ldots ,x_m)$ is a polynomial in $x$ of degree $\mu_i+j-m$ with leading coefficient equal to $h_{\mu_i+j-m}(1,\ldots ,1)=\binom{\mu_i+j-1}{\mu_i+j-m}$ (\cite{Macd}, p. 26).
(\ref{md1}) and (\ref{md2}) then give that  $\det (R_{j}(x-l_i))_{i,j=1}^m$ is a polynomial of degree at most
$H=\sum_{i=1}^m\mu_i -\frac{m(m-1)}{2}$ with  coefficient of the power $x^H$ equal to
$$
\det \left( \binom{\mu_i+j-1}{\mu_i+j-m}\right) _{1\le i,j\le m}\prod_{1\le i<j\le m}(l_j-l_i).
$$
This determinant can be easily transformed into a Vandermonde determinant
by elementary column operations to get
$$
(-1)^{\binom{m}{2}}\frac{\left(\prod_{1\le i<j\le m}(l_j-l_i)\right)\left(\prod_{1\le i<j\le m}(\mu _j-\mu _i)\right)}{\prod _{i=1}^{m-1}i!},
$$
which it is clearly different to $0$.

\bigskip

\noindent\textbf{Third step} \textit{The Lemma holds for the case when $R_i(x)$ are any powers of $x$.}
Write $H=\sum_{i=1}^m\mu_i -m_1(m_1-1)/2-m_2(m_2-1)/2$. As a function of $w$, the determinant $C_x(w )=\det (S_{i,j}(x))_{i,j=1}^m$ is a polynomial  of
degree at most $K=m_2(m-(m_2+1)/2)$:
$$
C_x(w)=\sum_{j=0}^{K}a_j(x)w ^j.
$$
Denoting by $f_{x,l}(w)$ the columns of the determinant $C_x(w)=\det (S_{i,j}(x))_{i,j=1}^m$, we get
$$
a_j(x)=\frac{C_x^{(j)}(0)}{j!}=\sum _{h_1+\cdots +h_m=j}\frac{1}{h_1!\cdots h_m!}\det \begin{pmatrix}f_{x,1}^{(h_1)}(0)& \hdots &f_{x,m}^{(h_m)}(0)\end{pmatrix},
$$
where all the derivatives are with respect to $w$.

Each number $j$, $m_2(m_2-1)/2\le j\le K$, can be written  as $j=j_1+\cdots + j_{m_2}$, where $0\le j_1<j_2<\cdots<j_{m_2}\le m-1$. Write $i_u$, $u=1,\ldots , m_1$, for the numbers $\{0,1,2,\ldots , m-1\}\setminus \{j_1,\ldots, j_{m_2}\}$ ordered in increasing size. Denote by $D_{j_1,\ldots ,j_{m_2}}(x)$ the determinant with $l$-th column equal to $f_{x,l}^{(m-l)}(0)$, if $l=m-j_u$, $u=1,\ldots , m_2$, and $f_{x,l}(0)$ if $l=m-i_u$, $u=1,\ldots , m_1$.
A careful computation shows then that $a_j(x)=0$, for $j<m_2(m_2-1)/2$ and that
$$
a_{j}(x)=\sum _{\substack{j=j_1+\cdots + j_{m_2}\\ 0\le j_1<j_2<\cdots<j_{m_2}\le m-1}}D_{j_1,\ldots ,j_{m_2}}(x), \quad m_2(m_2-1)/2\le j\le K.
$$
By interchanging columns and deleting the corresponding entries of its last column ($(l,m)$ for $l=1,\ldots , m_1$ or $(l,m)$ for $l=m_1+1,\ldots , m$, depending whether $j_1=0$ or not), we see that each one of these determinants can be written as a $2\times 2$ block determinant
$$
(-1)^{j-\binom{m_2}{2}}\begin{pmatrix}A&0\\0&B\end{pmatrix},
$$
where
$$
A=\det (R_u(x-l_{k}))_{\substack{1\le u\le m_1\\k=m-i_{m_1},\ldots ,m-i_1}}\quad B=\det (R_u(x-l_{k}))_{\substack{m_1+1\le u\le m\\k=m-j_{m_2},\ldots,m-j_1}}.
$$
Applying the second step, we get that each $a_j(x)$ is a polynomial in $x$ of degree $H$ with leading coefficient equal to
$$
\alpha_j=(-1)^jc\sum _{\substack{j=j_1+\cdots + j_{m_2}\\ 0\le j_1<j_2<\cdots<j_{m_2}\le m-1}}\prod_{1\le u<k\le m_1}(l_{m-i_u}-l_{m-i_k})\prod_{1\le u<k\le m_2}(l_{m-j_u}-l_{m-j_k}),
$$
where $c$ is the following number (independent of $j$)
$$
(-1)^{\binom{m_1}{2}}\frac{\prod_{1\le u<k\le m_1}(\mu _k-\mu _u)}{\prod _{u=1}^{m_1-1}u!}\frac{\prod_{m_1+1\le u<k\le m}(\mu _k-\mu _u)}{\prod _{u=1}^{m_2-1}u!}.
$$
Hence $a_j(x)\not =0$, for $m_2(m_2-1)/2 \le j\le K$, and then as a function of $x$, the determinant $\det (S_{i,j})_{i,j=1}^m$ is a polynomial in $x$ of degree at most $H$ with  coefficient of $H$ equal to
$$
\sum_{j=m_2(m_2-1)/2}^{K}\alpha_j w^j.
$$
Since this is a polynomial in $w$ of degree $K$, it vanishes for at most a finite set of complex numbers $w$.
In particular, for $l_i=i$, a careful computation gives that this polynomial is equal to
$$
(-1)^{\binom{m_1}{2}}(-1)^{\binom{m_2}{2}}\left(\prod_{1\le u<k\le m_1}(\mu _k-\mu _u)\right)\left(\prod_{m_1+1\le u<k\le m}(\mu _k-\mu _u)\right) w ^{\binom{m_2}{2}}(w -1)^{m_1m_2}.
$$
This proves the third step and the Lemma.

\end{proof}

\begin{proof} of Lemma \ref{lgp2}.

We first notice that the cases $\epsilon =0$ and $\epsilon =1$ are equivalent. Indeed, write
$\mathcal{R}$, $\tilde{\mathcal{R}}$ for the $n$-tuple of polynomials
$$
\mathcal{R}=(R_1,\cdots , R_{m_1},R_{m_1+1},\cdots , R_{m}),\quad \tilde{\mathcal{R}}=(R_{m_1+1},\cdots , R_{m},R_1,\cdots , R_{m_1}),
$$
respectively. Write also
$$
U^{\mathcal{R},m_1,m_2}_{\epsilon, w}=\sum _{j=1}^{m+1} (-1)^{j+1}w^{\epsilon (m+1-j)}\det U_j(x).
$$
Then it is easy to see that $U^{\mathcal{R},m_1,m_2}_{1, w}=(-1)^{m_1}w^{\binom{m+1}{2}}U^{\tilde{\mathcal{R}},m_2,m_1}_{0,1/w}$. Hence, we only prove the case $\epsilon =0$.

Proceeding as in the previous proof, it is enough to prove the case when $m_2=0$ and $R_i$ are any powers of $x$.
We denote again $\mu_i=\deg R_i$.

Write $f_{i,j}$, $i=1,\ldots , m$, for the rows of the matrix $U_j$ and $\varphi (x)=\sum _{j=1}^{m+1} (-1)^{j+1}\det U_j(x)$. We then have
$$
\varphi^{(h)}(x)=\sum _{h_1+\cdots +h_m=h}\frac{h!}{h_1!\cdots h_m!}\sum _{j=1}^{m+1}(-1)^{j+1}\det \begin{pmatrix}f_{1,j}^{(h_1)}(x)\\ \vdots \\f_{m,j}^{(h_m)}(x)\end{pmatrix}.
$$
Then, if $\varphi ^{(h)}\not =0$ there exist nonnegative integers $h_1,\ldots, h_m$, $h_1+\cdots +h_m=h$ for which
\begin{equation}\label{md3}
\sum _{j=1}^{m+1}(-1)^{j+1}\det \begin{pmatrix}f_{1,j}^{(h_1)}(x)\\ \vdots \\f_{m,j}^{(h_m)}(x)\end{pmatrix}\not=0.
\end{equation}
Since $f_{i,j}(x)=((x-r)^{\mu_i})_{r=1-j,2-j,\ldots,m+1-j, r\not =0}$, this implies that $\mu_i-h_i\ge 0$, $i=1,\ldots, m$, and different to each others. We can assume that they are decreasing numbers.  In particular, $\mu_i-h_i\ge m-i$, $i=1,\ldots, m$.

We use again Schur functions. Write $l_i^j$, $i=1,\ldots , m$, for the numbers in the set $\{1-j,2-j,\ldots,m+1-j\}\setminus \{0\}$ in increasing order.
Define now $\lambda_i=\mu_i-h_i-m+i$ and $x_i^j=x-l_i^j$, $i=1,\ldots , m$. This gives
\begin{equation}\label{md4}
s_\lambda (x_1^j,\ldots , x_m^j)\prod_{1\le i<k\le m}(x_i^j-x_k^j)=\left(\prod_{i=1}^mh_i!\binom{\mu_i}{h_i}\right) \det \begin{pmatrix}f_{1,j}^{(h_1)}(x)\\ \vdots \\f_{m,j}^{(h_m)}(x)\end{pmatrix}.
\end{equation}
We notice that, as functions of $j$, each symmetric function
$$
m_\tau (x_1^j,\ldots , x_m^j)=\sum _{\alpha=(\alpha_1,\ldots ,\alpha_m)} (x_1^j)^{\alpha_1}\cdots (x_m^j)^{\alpha_m},
$$
is a polynomial in $j$ of degree $\vert \tau\vert$. Hence each complete symmetric function $h_r(x_1^j,\ldots , x_m^j)$ is also a polynomial in $j$ of degree $r$. To stress this dependence on $j$ we write
$h_r(x_1^j,\ldots , x_m^j)=\phi_r(j)$. As usual, we take $\phi_r=0$ if $r<0$.
Using (3.4), p. 41 of \cite{Macd}, we write
\begin{align*}
s_\lambda (x_1^j,\ldots , x_m^j)&=\det(h_{\mu_i -h_i+k-m}(x_1^j,\ldots , x_m^j))_{1\le i,k\le m}\\&=\det(\phi_{\mu_i -h_i+k-m}(j))_{1\le i,k\le m}.
\end{align*}
(\ref{md3}) and (\ref{md4}) then give
$$
\sum _{j=1}^{m+1}(-1)^{j+1}\det(\phi_{\mu_i -h_i+k-m}(j))_{1\le i,k\le m}\prod_{1\le i<k\le m}(x_i^j-x_k^j)\not=0.
$$
A simple computation gives that
$$
\prod_{1\le i<k\le m}(x_i^j-x_k^j)=\binom{m}{j-1}\prod_{i=1}^{m-1}i! ,
$$
and then
$$
\sum _{j=1}^{m+1}(-1)^{j+1}\binom{m}{j-1}\det(\phi_{\mu_i -h_i+k-m}(j))_{1\le i,k\le m}\not=0.
$$
Since $\phi_{\mu_i -h_i+k-m}(j)$ is a polynomial of degree $\mu_i-h_i+k-m$, we deduce that
$\det(\phi_{\mu_i -h_i+k-m}(j))_{1\le i,k\le m}$ is a polynomial in $j$ of degree at most $\sum_i(\mu_i-h_i)-m(m-1)/2$.
Taking into account that $\sum_{j=1}^{m+1}(-1)^{j+1}\binom{m}{j-1}p(j)=0$, for all polynomial $p$ with $\deg (p)\le m-1$,
we then conclude that $\sum_i(\mu_i-h_i)-m(m-1)/2\ge m$. Since $\sum_ih_i=h$, this gives
$\sum_i\mu_i-m(m+1)/2\ge h$. In other words, we have proved that if $\varphi ^{(h)}(x)\not =0$ then $\sum_i\mu_i-m(m+1)/2\ge h$. This proves that
the degree of $\varphi (x)$ is at most $\sum_i\mu_i-m(m+1)/2$.

\end{proof}

\end{document}